\numberwithin{equation}{section}%\documentclass[reqno,12pt]{amsart}
\numberwithin{equation}{section}
\numberwithin{equation}{section}
\newtheorem{thm}{\indent Theorem}[section]
\newtheorem{cor}[thm]{\indent Corollary}
\newtheorem{lem}[thm]{\indent Lemma}
\newtheorem{prop}[thm]{\indent Proposition}
\newtheorem{dfn}{{\indent\bf Definition}}[section]
\newtheorem{rmk}{{\indent\bf Remark}}[section]
\newtheorem{expl}{{\indent\bf Example}}[section]
\newcommand{\ra}{\rightarrow}
\newcommand{\mb}{\mbox}
\newcommand{\ol}{\overline}
\newcommand{\ttiny}{\fontsize{5pt}{\baselineskip}\selectfont}
\newcommand{\strl}[2]{\stackrel{\mbox{\ttiny $#1$}}{#2}}
\newcommand{\td}{\tilde}
\newcommand{\fr}{\frac}
\newcommand{\edd}{\end{document}}
\newcommand{\be}{\begin{equation}}
\newcommand{\ee}{\end{equation}}
\newcommand{\bsl}{\backslash}
\newcommand{\lagl}{\langle}
\newcommand{\ragl}{\rangle}
\newcommand{\lmx}{\left(\begin{matrix}}
\newcommand{\rmx}{\end{matrix}\right)}
\newcommand{\ldt}{\left|\begin{matrix}}
\newcommand{\rdt}{\end{matrix}\right|}
\newcommand{\dv}{{\rm div}}
\newcommand{\tr}{{\rm tr\,}}
\newcommand{\vfi}{\varphi}
\newcommand{\veps}{\varepsilon}
\newcommand{\const}{{\rm const}}
\newcommand{\bbr}{{\mathbb R}}
\newcommand{\bbc}{{\mathbb C}}
\newcommand{\ba}{\begin{array}}
\newcommand{\ea}{\end{array}}
\newcommand{\nnm}{\nonumber}
\newcommand{\beal}{\begin{align}}
\newcommand{\eal}{\end{align}}
\newcommand{\bea}{\begin{eqnarray}}
\newcommand{\eea}{\end{eqnarray}}
\newcommand{\spn}{{\rm Span\,}}
\newcommand{\pp}[2]{\fr{\partial #1}{\partial #2}}
\newcommand{\dd}[2]{\fr{d #1}{d #2}}
\begin{document}

\title[Variational characterizations of $\xi$-submanifolds in $\bbr^{m+p}$]{Variational characterizations of $\xi$-submanifolds\\ in the Eulicdean space $\bbr^{m+p}$} %title of paper and the running head option

\author[X. X. Li]{Xingxiao Li$^*$} %first author's name and the running head option

\author[Z. P. Li]{Zhaoping Li} %second author's name and the running head option

\dedicatory{}

%%%%%%%%%%%%%%% footnote %%%%%%%%%%%%%%%%
\subjclass[2000]{ %2000 MSC numbers
Primary 53A30; Secondary 53B25. }
%In case \subjclass[2000] command is not effective
%(or the version of amsart.cls is old), write as follows instead:
%\renewcommand{\thefootnote}{\fnsymbol{footnote}}
%\footnote[0]{2000\textit{ Mathematics Subject Classification}.
%Primary 00; Secondary 00.}
%
\keywords{ %key words and phrases
self-shrinker, Gaussian space, $\xi$-submanifold, variation formula, stability.}
\thanks{Research supported by
National Natural Science Foundation of China (No. 11671121, No. 11171091 and No. 11371018).}
%%%%%%%%%%%% Authors' addresses %%%%%%%%%%%%%
\address{% First Author
School of Mathematics and Information Sciences
\endgraf Henan Normal University \endgraf Xinxiang 453007, Henan, P.R. China
}
\email{xxl$@$henannu.edu.cn}

\address{% Second Author
School of Mathematics and Information Sciences
\endgraf Henan Normal University \endgraf Xinxiang 453007, Henan, P.R. China
}
\email{li\_zhaoping5555$@$163.com}

\begin{abstract}
$\xi$-submanifold in the Euclidean space $\bbr^{m+p}$ is a natural extension of the concept of self-shrinker to the mean curvature flow in $\bbr^{m+p}$. It is also a generalization of the $\lambda$-hypersurface defined by Q.-M. Cheng et al to arbitrary codimensions. In this paper, some characterizations for $\xi$-submanifolds are established. First, it is shown that a submanifold in $\bbr^{m+p}$ is a $\xi$-submanifold if and only if its modified mean curvature is parallel when viewed as a submanifold in the Gaussian space $(\bbr^{m+p},e^{-\fr{|x|^2}{m}}\lagl\cdot,\cdot\ragl)$; Then, two weighted volume functionals $V_\xi$ and $\bar V_\xi$ are introduced and it is proved that $\xi$-submanifolds can be characterized as the critical points of these two functionals; Also, the corresponding second variation formulas are computed and the ($W$-)stability properties for $\xi$-submanifolds are systematically studied. In particular, it is proved that $m$-planes are the only properly immersed, complete $W$-stable $\xi$-submanifolds with flat normal bundle under a technical condition. It would be interesting if this additional restriction could be removed.
\end{abstract}

\maketitle

\tableofcontents
\section{Introduction}
Let $x: M^m\to\bbr^{m+p}$ be an $m$-dimensional submanifold in the $(m+p)$-dimensional Euclidean space $\bbr^{m+p}$ with the second fundamental form $h$. Then $x$ is called a {\em self-shrinker} to the mean curvature flow if its mean curvature vector field $H:=\tr h$ satisfies
\be\label{eq1.1}
H+x^{\bot}=0,
\ee
where $x^\bot$ is the orthogonal projection of the position vector $x$ to the normal space $T^\bot M^m$ of $x$.

It is well known that the self-shrinker plays an important role in the study of the mean curvature flow. In fact, self-shrinkers correspond to self-shrinking solutions to the mean curvature flow and describe all possible Type I singularities of the flow. Up to now, there have been a plenty of research papers on self-shrinkers together with the asymptotic behavier of the flow. For details of this see, for example,  \cite{a-l}--\cite{c-l2}, \cite{c-p}, \cite{c-z}--\cite{d-x2}, \cite{h}--\cite{l-wei}, \cite{s} and references therein. In particular, the following result well-known (See Corollary \ref{cor3.2} in Section 3):

{\em An immersion $x:M^m\to\bbr^{m+p}$ is a self-shrinker if and only if it is minimal when viewed as a submanifold
of the Gaussian space $(\bbr^{m+p},e^{-\fr{|x|^2}{m}}\lagl\cdot,\cdot\ragl)$.}

In Mar., 2014, Cheng and Wei formally introduced (\cite{c-w2}, finally revised in May, 2015) the definition of $\lambda$-hypersurface of weighted volume-preserving mean curvature flow in Euclidean space, giving a natural generalization of self-shrinkers in the hypersurface case. According to \cite{c-w2}, a hypersurface $x: M^{m}\to\bbr^{m+1}$ is called a $\lambda$-hypersurface if its (scalar-valued) mean curvature $H$ satisfies
\be\label{eq1.2}
H+\langle x, N\rangle=\lambda
\ee
for some constant $\lambda$, where $N$ is the unit normal vector of $x$. They also found some variational characterizations for those new kind of hypersurfaces, proving that {\em a hypersurface $x$ is a $\lambda$-hypersurface if and only if it is the critical point of the weighted area functional $A$ preserving the weighted volume functional $V$} where for any $x_0\in\bbr^{m+1}$ and $t_0\in\bbr$,
$$A(t)=\int_M e^{-\fr{|x(t)-x_0|^2}{2t_0}}d\mu,\quad V(t)=\int_M\lagl x(t)-x_0,N\ragl e^{-\fr{|x(t)-x_0|^2}{2t_0}}d\mu$$
with $N$ the unit normal of $x$. Meanwhile, some rigidity or classification results for $\lambda$-hypersurfaces are obtained, for example, in \cite{c-o-w}, \cite{c-w3} and \cite{g2}; For the rigidity theorems for space-like $\lambda$-hypersurfaces see \cite{l-chp}.

We should remark that this kind of hypersurfaces were also been studied in \cite{mc-r} (arXiv preprint: Jul. 2013; formally published in 2015) where the authors considered the stable, two-sided, smooth, properly immersed solutions to the Gaussian Isoperimetric Problem, namely, they studied hypersurfaces $\Sigma\subset\bbr^{m+1}$ that are second order stable critical points of minimizing the weighted area functional ${\mathcal A}_\mu(\Sigma)=\int_\Sigma e^{-|x|^2/4}d{\mathcal A_\mu}$ for compact (uniformly) normal variations that, in a sense, ``{\em preserve the weighted volume} ${\mathcal V}_\mu(\Sigma)=\int_\Sigma e^{-|x|^2/4}d{\mathcal V_\mu}$''. It turned out that {\em the Euler equation of this variation problem is exactly equivalent to the $\lambda$-hypersurface equation \eqref{eq1.2}}. As the main result, it is also proved that {\em hyperplanes are the only stable ones under the compact normal variations ``preserving the weighted volume''}.

In 2015, the first author and his co-author made a natural generalization of both self-shrinkers and $\lambda$-hypersurfaces, by introducing the concept of $\xi$-submanifolds (\cite{l-ch}, arXiv preprint: 8 Nov. 2015). The main theorem of \cite{l-ch} is a rigidity result of Lagrangian $\xi$-submanifolds in $\bbc^2$, which is motivated by a result of \cite{l-w} for Lagrangian self-shrinkers in $\bbc^2$. By definition, an immersed submanifold $x: M^m\to\bbr^{m+p}$ is called a $\xi$-submanifold if there is a parallel normal vector field $\xi$ such that the mean curvature vector field $H$ satisfies
\be\label{eq1.3}
H+x^{\bot}=\xi.
\ee

We reasonably believe that, if self-shrinkers and $\lambda$-hypersurfaces take places of minimal submanifolds and constant mean curvature hypersurfaces, respectively, then $\xi$-submanifolds are expected to take the place of submanifolds of parallel mean curvature vector. So there would be many properties of $\xi$-submanifolds that are parallel to submanifolds with parallel mean curvature vectors.

In this paper, we aim at giving more characterizations of the $\xi$-submanifolds, especially ones by variation method, the latter being more important since a differential equation usually needs a variational method to solve. For example, self-shrinker equation \eqref{eq1.1} has been exploited a lot by making use of variation formulas. As a main part of this paper, we also study the related stability problems.

The organization of the present paper is as follows:

In Section 2, we present the necessary preliminary material, including some typical examples;

In Section 3 we prove a theorem (Theorem \ref{main1}) which generalizes (to $\xi$-submanifolds) a well-known result that self-shrinkers are equivalent to minimal submanifolds in the Gaussian space;

In Section 4, we introduce, for a given manifold $M^m$ of dimension $m$, two families of weighted volume functionals $V_\xi$ and $\bar V_\xi$ in \eqref{vxi} parametrized by $\bbr^{m+p}$-valued functions $\xi:M^m\to\bbr^{m+p}$. Then we compute the first variation formulas (Theorem \ref{main2}) which give that $\xi$-submanifolds are exactly the critical points of $V_\xi$ and $\bar V_\xi$ with $\xi$ suitably chosen (Corollary \ref{cor1}). We also compute the second variation formula of both functionals for $\xi$-submanifolds, in such a situation $V_\xi$ and $\bar V_\xi$ being essential the same (Theorem \ref{main3}).

In Section 5, Section 6 and Section 7, we study the stability problem of $\xi$-submanifolds. After checking that all the canonical examples are not stable in the usual sense (Section 5), we introduce in Section 6 the concept of $W$-stability and are able to prove that, among the typical examples given in Section 2, only the $m$-planes are $W$-stable (Theorem \ref{main4} and Theorem \ref{prop1'}). Meanwhile we give an index estimate for the standard sphere (Theorem \ref{prop1'}).

Finally in the last section (Section 7) we are able to prove the following main Theorem:

{\bf Theorem \ref{main6}} {\em Let $x:M^m\to\bbr^{m+p}$ be a properly immersed, complete and $W$-stable $\xi$-submanifold with flat normal bundle satisfying  \be\label{A} h(A_\xi(x^\top),v^\top)=0,\quad\forall\,v\in\bbr^{m+p},\ee
where $A_\xi$ denotes the Weingarten map in the direction of $\xi$. Then $x(M^m)$ must be an $m$-plane.}

Then the following corollary is direct:

{\bf Corollary} (Corollaries \ref{cor7.1} and \ref{cor7.2}) {\em Any properly immersed, complete and $W$-stable $\xi$-submanifold in $\bbr^{m+p}$ with flat normal bundle must be an $m$-plane if the Weigarten map $A_\xi$ with respect to $\xi$ vanishes.

In particular, Any properly immersed, complete and $W$-stable self-shrinker in $\bbr^{m+p}$ with flat normal bundle must be an $m$-plane.}

Consequently, the following problem is interesting:

{\bf Problem}: {\em Naturally we believe and expect that the additional condition \eqref{A} in Theorem \ref{main6} could be dropped; Furthermore, motivated by the main theorem of \cite{mc-r}, it is also expected, without any additional conditions, that the $m$-planes are the only properly immersed, complete $W$-stable $\xi$-submanilds or, if it is not the case, more examples could be found.}

{\rmk\em Our discussion of variation problem for $\xi$-submanifolds naturally gives a motivation of variational characterization of the submanifolds with parallel mean curvature vectors in the Euclidean space. For the detail of this, see Remark \ref{rmk} at the end of Section 5. 

Furthermore, by using an explanation of the $VP$-variation with some kind of related $(m+1)$-dimensional volume enclosed by a compact $\xi$-submanifold, the isoparametric problem for submanifolds of higher codimension will be considered elsewhere in a forthcoming paper (\cite{li-li}).}

{\bf Acknowledgement} This research is supported by National Natural Science Foundation of China (No. 11671121, No. 11171091 and No. 11371018). The first author thanks Professor D. T. Zhou for kindly introducing to him the reference \cite{mc-r}.

\section{$\xi$-sumanifolds--definition and typical examples}

Let $\bbr^{m+p}$ be the $m$-dimensional Euclidean space with the standard metric denoted by $\lagl\cdot,\cdot\ragl$ and $x:M^m\to \bbr^{m+p}$ be an immersion with the induced metric $g$, the second fundamental form $h$ and the mean curvature vector $H:=\tr_g h$. Denote by $TM$ the tangent space of $M$ and define $T^\bot M:=(x_*(TM))^\bot$ to be the normal space of $x$ in $\bbr^{m+p}$.

\begin{dfn}[$\xi$-submanifolds, \cite{l-ch}] The immersed submanifold $x:M^m\to \bbr^{m+p}$ is called a $\xi$-submanifold if the normal vector field $H+x^\bot$ is parallel in $T^\bot M$, or the same, there exists some parallel normal field $\xi\in \Gamma(T^\bot M)$ such that
\be\label{xisub}
H+x^\bot=\xi.
\ee
\end{dfn}

Clearly, self-shrinkers of the mean curvature flow are a special kind of $\xi$-submanifolds.

The following are some typical examples of $\xi$-submanifolds:

\begin{expl}[The $\xi$-curves]\label{expl} \rm\mb{}

Let $x:(a,b)\to\bbr^{1+p}$ be a unit-speed smooth curve (that is, with an arc-length parameter $s$). Denote by $\{T,e_\alpha:\ 2\leq\alpha\leq 1+p\}$ the Frenet frame with $T:=\dot x\equiv\pp{x}{s}$ being the unit tangent vector,  and $\kappa_i$ the $i$-th curvature, $i=1,\cdots, p$. Then we have the following Frenet formula:
\be\label{fre}\dot T=\kappa_1e_2,\ \dot e_2=-\kappa_1T+\kappa_2e_3,\ \cdots,\  \dot e_p=-\kappa_{p-1}e_{p-1}+\kappa_pe_{p+1},\ \dot e_{1+p}=-\kappa_pe_p.\ee
In particular, if there exists some $i$ such that $\kappa_i\equiv 0$, then it must hold that $\kappa_j\equiv 0$ for all $j>i$. Sometimes we call $\kappa:=\kappa_1$ and $\tau:=\kappa_2$ the curvature and the (first) torsion of $x$. Now the definition equation \eqref{xisub} becomes
$\left(\dd{}{s}(\dot T+x-\lagl x,T\ragl T)\right)^\bot\equiv 0$ which, by \eqref{fre}, is equivalent to
\be\dot\kappa_1-\kappa_1\lagl x,T\ragl\equiv 0,\quad \kappa_1\kappa_2\equiv 0.\label{xisub1}\ee
It follows that

{\em $x$ is a $\xi$-curve if and only if it is a plane curve with the curvature $\kappa$ satisfying}
\be\label{curve}\dot\kappa-\kappa\lagl x,\dot x\ragl\equiv 0.\ee
In particular,

{\em $x$ is a self-shrinker if and only if it is a plane curve with the curvature $\kappa$ satisfying}
\be\label{sscurve}\kappa_r+\lagl x,N\ragl\equiv 0,\ee
where $\kappa_r$ is the relative curvature and $N:=\pm e_2$ is the unit normal of $x$ pointing the left of $T$. Note that curves in the plane satisfying \eqref{sscurve} are classified by U. Abresch and J. Langer in \cite{a-l} which are now known as {\em Abresch-Langer curves} (see \cite{l-w}).
\end{expl}

\begin{expl}[The $m$-planes not necessarily passing through the origin]\label{expl1} \rm\mb{}

An $m$-plane $x:P^m\to\bbr^{m+p}$ ($p\geq 0$) is by definition the inclusion map of a $m$-dimensional connected, complete and totally geodesic submanifold of $\bbr^{m+p}$. In other words, those $P^m$s are subplanes of dimension $m$ in $\bbr^{m+p}$ that are not necessarily passing through the origin. Let $p_0$ be the orthogonal projection of the origin $0$ onto $P^m$ and $\xi$ be the position vector of $p_0$ which is constant and is thus parallel along $P^m$. Clearly $P^m$ is a $\xi$-submanifold because $H\equiv 0$ and the tangential part $x^\top$ of $x$ is precisely $x-\xi$.
\end{expl}

\begin{expl}[The standard spheres centered at the origin]\label{expl2} \rm\mb{}

For a given point $x_0\in\bbr^{m+1}$ and a positive number $r$. Define
$$
S^m(r,x_0)=\{x\in \bbr^{m+1};\ |x-x_0|=r\},
$$
the standard $m$-sphere in $\bbr^{m+1}$ with radius $r$ and center $x_0$. In particular, we denote $S^m(r):=S^m(r,0)$. It is easily find that $S^m(r,x_0)$ is a $\xi$-submanifold if and only if $x_0=0$.

In fact, since $x-x_0$ is a normal vector field of length $r$, the normal part $x^\bot$ of $x$ is
$$
x^\bot=\fr1{r^2}\lagl x,x-x_0\ragl(x-x_0).
$$
Note that $H=-\fr m{r^2}(x-x_0)$ is parallel. It follows that $H+x^\bot$ is parallel if and only if $x^\bot$ is. This is clearly equivalent to that $\lagl x,dx\ragl\equiv 0$ which is true if and only if $x_0=0$.
\end{expl}

\begin{expl}[Submanifolds in a sphere with parallel mean curvature vector]\label{expl3}
\rm\mb{}

Let $x:M^m\to S^{m+p}(a)\subset \bbr^{m+p+1}$ be a submanifold in the standard sphere $S^{m+p}(a)$ of radius $a$, which is of parallel mean curvature vector $H$. Then as a submanifold of $\bbr^{m+p+1}$, $x$ is a $\xi$-submanifold.\end{expl}

In fact, as the submanifold of $\bbr^{m+p+1}$, the mean curvature vector of $x$ is $\bar H=\triangle x=H-\fr{m}{a^2}x$. Thus $\xi:=\bar H+x^\bot=H+(1-\fr{m}{a^2})x$ which is clearly parallel. In particular, $x(M^m)\subset\bbr^{m+p+1}$ is a self-shrinker if and only if $x(M^m)\subset S^{m+p}(a)$ is a minimal submanifold.

\begin{expl}[The product of $\xi$-submanifolds]\label{expl4} \rm\mb{}

Let $x_a:M^{m_a}\to\bbr^{m_a+p_a}$, $a=1,2$, be two immersed submanifolds. Denote $m=m_1+m_2$, $p=p_1+p_2$ and $M^m=M^{m_1}\times M^{m_2}$. Then it is not hard to show that $x:=x_1\times x_2:M^m\to\bbr^{m+p}$ is a $\xi$-submanifold if and only if both $x_1$ and $x_2$ are $\xi$-submanifolds.

In particular, for any given positive numbers $r_1,\cdots,r_k$ ($k\geq 0$), positive integers $m_1,\cdots, m_k,n_1,\cdots,n_l$ ($l\geq 0$, $k+l>0$) and $n\geq n_1+\cdots+n_l$, the embedding
\be\label{product}
x:S^{m_1}(r_1)\times\cdots\times S^{m_k}(r_k)\times P^{n_1}\times\cdots\times P^{n_l}\to\bbr^{m_1+\cdots+m_k+k+n}
\ee
are all $\xi$-submanifolds.
\end{expl}

\section{As submanifolds of the Gaussian space}

As mentioned in the introduction, the $m$-dimensional self-shrinkers of the mean curvature flow in the Euclidean space $\bbr^{m+p}\equiv (\bbr^{m+p},\lagl\cdot,\cdot \ragl)$ is equivalent to being the minimal submanifolds when viewed as submanifolds in the Gaussian metric space $(\bbr^{m+p}, \bar g)$ where $\bar g:= e^{-\fr{|x|^2}{m}}\lagl\cdot,\cdot \ragl$. In this section, we generalize this to $\xi$-submanifolds to obtain our first characterization. In fact, we will prove a theorem which says that $\xi$-submanifolds are essentially equivalent to being submanifolds of parallel mean curvature in $(\bbr^{m+p}, \bar g)$.

For an immersion $x:M^m\to\bbr^{m+p}$, we use $(\ol\cdots)$ to denote geometric quantities when $x$ is taken as  an immersion into $(\bbr^{m+p}, \bar g)$ that correspond those quantities $(\cdots)$ when $x$ is taken as an immersion into $(\bbr^{m+p}, \lagl\cdot,\cdot \ragl)$. So, for example, we have the induced metric $\bar g$, the second fundamental form $\bar h$ and the mean curvature $\bar H$, etc. To make things more clear, we would like to introduce a ``{\em modified mean curvature}'' for the immersion $x$, which is defined as $\td H=e^{-\fr{|x|^2}{2m}}\bar H$.

Now our first characterization theorem can be stated as follows:

{\thm[The first characterization]\label{main1} An immersion $x:M^m\to\bbr^{m+p}$ is a $\xi$-submanifold if and only if it is of parallel modified mean curvature $\td H$.}

\begin{proof}
Denote by $D$ and $\bar D$ the Levi-Civita connections of $(\bbr^{m+p}, \lagl\cdot,\cdot\ragl)$ and $(\bbr^{m+p}, \bar g)$ with $\bar g= e^{-\fr{|x|^2}{m}}\lagl\cdot,\cdot\ragl)$, respectively.  For any given frame field $\{e_A;\ A=1,2\cdots,m+p\}$, the corresponding connection coefficients of $D$ and $\bar D$ are respectively denoted by $\Gamma^C_{AB}$ and $\bar\Gamma^C_{AB}$, where we assume that $A,B,C,\cdots=1,2,\cdots m+p$. Then by a easy computation using the Koszul formula we can find
\be\label{Gamma}
\bar\Gamma^C_{AB}=\Gamma^C_{AB}+\fr1m\left(g(x,e_D)g_{AB}g^{CD}-g(x,e_A)\delta^C_B-g(x,e_B)\delta^C_A\right),
\ee
or equivalently,
\be\label{D}
\bar D_{e_B}e_A=D_{e_B}e_A+\fr1m(g_{AB}x-g(x,e_A)e_B-g(x,e_B)e_A).
\ee

Now given an immersion $x:M^m\to\bbr^{m+p}$, the induced metric on $M^m$ by $x$ of the ambient metric $\bar g$ will still be denoted by $\bar g$. Choose a frame field $\{e_i,e_\alpha\}$ along $x$ such that $e_i$, $i=1,2,\cdots,m$, are tangent to $M^m$ and $e_\alpha$, $\alpha=m+1,\cdots,m+p$ are normal to $x_*(TM^m)$ satisfying $\lagl e_\alpha,e_\beta\ragl\equiv g(e_\alpha,e_\beta)=\delta_{\alpha\beta}$. Then by the Gauss formula and \eqref{Gamma} or \eqref{D}, we find the relation between the second fundamental forms $\bar h$ and $h$ is as follows:
\be\label{h}
\bar h_{ij}\equiv \bar h(e_i,e_j)=\left(\bar D_{e_j}e_i\right)^\bot=h_{ij}+\fr1m x^\bot g_{ij}
\ee
where $h_{ij}=h(e_i,e_j)=\left(D_{e_j}e_i\right)^\bot$. It follows that the mean curvature vectors satisfy
\be\label{H}
\bar H\equiv\bar g^{ij}\bar h_{ij}=e^{\fr{|x|^2}{m}}(H+x^\bot).
\ee

Now we compute the covariant derivative of the modified mean curvature $\td H\equiv e^{-\fr{|x|^2}{2m}}\bar H$ with respect to the normal connection $\bar D^\bot$. First we note that, since $\bar g$ is conformal to $\lagl\cdot,\cdot\ragl$ on $\bbr^{m+p}$, $\{e_\alpha\}$ which satisfies $\lagl e_\alpha,e_\beta\ragl=\delta_{\alpha\beta}$ remains a normal frame field of $x$ considered as the immersion into $(\bbr^{m+p},\bar g)$, of course not orthonormal anymore. Thus we can write
$$
\td H=\sum\td H^\alpha e_{\alpha}\text{\ with\ } \td H^\alpha=e^{\fr{|x|^2}{2m}}(H^\alpha+\lagl x,e_\alpha\ragl)
$$
where $H=\sum H^\alpha e_\alpha$. Note that by \eqref{Gamma},
$$
\bar\Gamma^\alpha_{\beta i}=\Gamma^\alpha_{\beta i}-\fr1m\lagl x,e_i\ragl\delta^\alpha_\beta, \quad\forall \alpha,\beta, i.
$$
It follows that, for each $\alpha=m+1,\cdots,m+p$,
\begin{align*}
\left(\bar D^\bot_{e_i}\td H\right)^\alpha=&e_i(\td H^\alpha)+\td H^\beta\bar\Gamma^\alpha_{\beta i}\\
=&e_i\big(e^{\fr{|x|^2}{2m}}\big)(H^\alpha+\lagl x,e_\alpha\ragl)+e^{\fr{|x|^2}{2m}}(e_i(H^\alpha)+e_i\lagl x,e_\alpha\ragl)\\
&+e^{\fr{|x|^2}{2m}}(H^\beta+\lagl x,e_\beta\ragl)(\Gamma^\alpha_{\beta i}-\fr1m\lagl x,e_i\ragl\delta^\alpha_\beta)\\
=&e^{\fr{|x|^2}{2m}}(e_i(H^\alpha)+e_i\lagl x,e_\alpha\ragl+H^\beta\Gamma^\alpha_{\beta i}+\lagl x,e_\beta\ragl\Gamma^\alpha_{\beta i})\\
=&e^{\fr{|x|^2}{2m}}\big(D^\bot_{e_i}(H+x^\bot)\big)^\alpha,
\end{align*}
where $\bar D^\bot$, $D^\bot$ denote the induced normal connections accordingly. Thus Theorem \ref{main1} is proved.
\end{proof}

The following conclusion is direct by \eqref{H}:

{\cor\label{cor3.2} An immersion $x:M^m\to\bbr^{m+p}$ is a self-shrinker if and only if it is minimal when viewed as a submanifold of the Gaussian space $(\bbr^{m+p},\bar g)$.}

\section{Variational characterizations}

In this section, we first define two functionals and derive the corresponding first and second variation formulas, aiming to establish variational characterizations of the $\xi$-submanifolds.

For a given manifold $M\equiv M^m$ of dimension $m$, define
$$
{\mathcal M}:=\{\text{all the immersions }x:M^m\to \bbr^{m+p}\}
$$
and let $\xi:M^m\to\bbr^{m+p}$ be a vector-valued function on the manifold $M^m$. Then we can naturally introduce as follows two kinds of interesting functionals $V_\xi$ and $\bar V_\xi$ on $\mathcal M$ which are parametrized by $\xi$:
\be\label{vxi}
V_\xi(x):=\int_M e^{-f_x}dV_x,\quad \bar V_\xi(x)=\int_M e^{-\bar f_x}dV_x,
\ee
where for any $p\in M^m$, $f_x(p):=\frac{1}{2}|x(p)-\xi(p)|^2$, $\bar f_x(p)=f_x(p)-\fr12|\xi(p)|^2$ and $dV_x$ is the volume element of the induced metric $g_x$ of $x$.

\begin{rmk}\rm (1) These two functionals $V_\xi$ and $\bar V_\xi$ are both of weighted volumes in a sense since, for example,  the weighted volume element $e^{-\frac{1}{2}|x-\xi|^2} dV_x$ corresponding to the first one can be viewed as induced from an unnormalized ``general Gaussian measure'' on the ambient Euclidean space $\bbr^{m+p}$ with ``{\em mean}" $\xi$. Note that when $\xi$ is constant as in the case of $m$-planes, $\left(\fr1{\sqrt{2\pi}}\right)^{m+p}e^{-f}dV_{\bbr^{m+p}}$ is nothing but the usual general Gaussian measure with the mean $\xi$ (and the {\em variance} $\sigma^2\equiv 1$)\,\footnote{See the explanation in Wikipedia, the free encyclopedia under the title ``Gaussian measure''}; Meanwhile the functional $\bar V_\xi$ is clearly a new weighted volume obtained from $V_\xi$ by just adding a new weight $e^{\fr12|\xi|^2}$. Also, the weight-function $e^{-f}$ or $e^{-\bar f}$ naturally has a close relation with the definition of the {\em Hermitian Polynomials} (see, for example, G. Dattoli, A. Torre, S. Lorenzutta, G. maino and C. Chiccoli, {\em Multivariable Hermite Polynormials and Phase-Space Dynamics}, on the website: http://ntrs.nasa.gov/archive/nasa/casi.ntrs.nasa.gov/19950007516.pdf).  These polynomials will also be used later in our stability discussion in Section 5.

(2) All of the canonical $\xi$-submanifolds (that is, $m$-planes $P^m$, standard $m$-spheres $S^m(r)$) and their products \eqref{product} have finite values for both the functionals $V_\xi$ and $\bar V_\xi$, where $\xi$ is chosen to be $H+x^\bot$.\end{rmk}

Now let $x\in {\mathcal M}$ be fixed with the induced Riemannian metric $g:=x^*\lagl\cdot,\cdot\ragl$ and suppose that $F:M\times(-\varepsilon,\varepsilon)\to\bbr^{m+p}$ is a
variation of $x$ with $\eta:=F_*(\pp{}{t})|_{t=0}$ being the the corresponding variation vector field.
For $p\in M$, $t\in(-\veps,\veps)$, denote
$$
x_t(p)=F(p,t),\quad\pp{F}{t}=F_*\left(\pp{}{t}\right),\quad \pp{F}{u^i}=F_*(\pp{}{u^i})\equiv (x_t)_*\left(\pp{}{u^i}\right)
$$
where $(u^i)$ is a local coodinates on $M$. We always assume that, for each $t\in (-\veps,\veps)$, $x_t:M^m\to\bbr^{m+p}$ is an immersion, that is, $x_t\in {\mathcal M}$, $t\in(-\veps,\veps)$.

{\dfn[Compact variation]\rm A variation $F:M\times(-\varepsilon,\varepsilon)\to\bbr^{m+p}$ is called compactly supported, or simply {\em compact}, if there exists a relatively compact open domain $B$ such that, for each $t\in(-\veps,\veps)$, the support set $\ol{\{p\in M^m;\ \pp{F}{t}(p)\neq 0\}}$ of the vector field $\pp{F}{t}$ is contained in $B$.}

Denote $f_t=f_{x_t}$, $\bar f_t=\bar f_{x_t}$ and
$$
\Gamma_0(T^\bot(M))=\{\text{all smooth normal vector fields $\eta$ of $x$ with compact support}\}.
$$

\begin{thm}[The first variation formula]\label{main2} Let $F$ be a compact variation of $x$. Then
\begin{align}
V_\xi'(t)=&-\int_M
\lagl(H_{t}+x_{t}^\bot-\xi)+\nabla^t\left(\lagl x_t,\xi\ragl-\fr12|\xi|^2\right),\pp{F}{t}\ragl e^{-f_t}dV_t,\label{1vf}\\
\bar V_\xi'(t)=&-\int_M
\lagl(H_{t}+x_{t}^\bot-\xi)+\nabla^t\lagl x_t,\xi\ragl,\pp{F}{t}\ragl e^{-\bar f_t}dV_t,\label{1wf}
\end{align}
where $H_t$ is the mean curvature vector of the immersion $x_t$, $\nabla^t$ is the gradient operator of the induced metric $g_{x_t}$ and $dV_t=dV_{x_t}$.

In particular, if $F$ is a normal variation of $x$, that is, $\eta\in \Gamma_0(T^\bot(M))$, then
\begin{align}
V_\xi'(0)=&-\int_M\lagl(H+x^\bot-\xi),\eta\ragl e^{-f}dV,\label{vf0}\\
\bar V_\xi'(0)=&-\int_M\lagl(H+x^\bot-\xi),\eta\ragl e^{-\bar f}dV.\label{wf0}
\end{align}
\end{thm}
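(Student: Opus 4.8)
The plan is to differentiate the integrals $V_\xi(t)=\int_M e^{-f_t}\,dV_t$ and $\bar V_\xi(t)=\int_M e^{-\bar f_t}\,dV_t$ under the integral sign, remembering that $\xi$ is a \emph{fixed} (that is, $t$-independent) $\bbr^{m+p}$-valued function on $M$, so that only $x_t$ moves. Throughout I write $Z:=\pp{F}{t}$ for the variation field and decompose $Z=Z^\top+Z^\bot$ into its tangent and normal parts relative to $x_t$. Each derivative then splits into a weight term and a volume-element term. For the weight, since $\xi$ does not vary, $\pp{f_t}{t}=\lagl x_t-\xi,Z\ragl$, and the same holds for $\bar f_t=f_t-\fr12|\xi|^2$ because the extra summand is $t$-independent; hence $\pp{}{t}e^{-f_t}=-\lagl x_t-\xi,Z\ragl e^{-f_t}$ and likewise for $e^{-\bar f_t}$.

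For the volume element I would run the standard first-variation-of-area computation: writing $g_{ij}(t)=\lagl \pp{F}{u^i},\pp{F}{u^j}\ragl$ and using the flatness and torsion-freeness of the Euclidean connection $D$ to commute $D_{\pp{}{t}}\pp{F}{u^i}=D_{\pp{}{u^i}}Z$, one gets $\pp{}{t}dV_t=\bigl(\fr12 g^{ij}\pp{g_{ij}}{t}\bigr)dV_t$. Decomposing $Z$ and using $(D_{\partial_i}\partial_j F)^\bot=h_{ij}$ for the normal part while recognizing the tangential part as a covariant derivative on $M$ yields the familiar identity $\pp{}{t}dV_t=\bigl(\dv(Z^\top)-\lagl H_t,Z\ragl\bigr)dV_t$, where $H_t$ is the mean curvature vector of $x_t$.

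Combining the two contributions and invoking the compact-support hypothesis, I would integrate the divergence term by parts through $\dv(e^{-f_t}Z^\top)=e^{-f_t}\dv(Z^\top)-e^{-f_t}\lagl\nabla^t f_t,Z^\top\ragl$, whose integral over $M$ vanishes; this converts $\int_M e^{-f_t}\dv(Z^\top)\,dV_t$ into $\int_M e^{-f_t}\lagl\nabla^t f_t,Z\ragl\,dV_t$ (the gradient $\nabla^t f_t$ being tangent). The crucial algebraic step is then to write $f_t=\fr12|x_t|^2-\bigl(\lagl x_t,\xi\ragl-\fr12|\xi|^2\bigr)$ and observe that $\nabla^t\bigl(\fr12|x_t|^2\bigr)=x_t^\top$, so that $\nabla^t f_t=x_t^\top-\nabla^t\bigl(\lagl x_t,\xi\ragl-\fr12|\xi|^2\bigr)$. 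Substituting this and using $-\lagl x_t-\xi,Z\ragl+\lagl x_t^\top,Z\ragl=-\lagl x_t^\bot-\xi,Z\ragl$ collapses everything to \eqref{1vf}; the computation for $\bar V_\xi$ is identical with $\bar f_t=\fr12|x_t|^2-\lagl x_t,\xi\ragl$, giving \eqref{1wf}. The special cases \eqref{vf0} and \eqref{wf0} then follow immediately, since for a normal variation $\eta\in\Gamma_0(T^\bot M)$ the tangential gradient terms pair to zero against $\eta$ and $x_0=x$.

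I expect the main difficulty to be bookkeeping rather than anything conceptual: one must keep straight that $\xi$ genuinely depends on the base point, so the $t$-derivative of the weight (which ignores that dependence) and the $M$-gradient of $f_t$ (which does not) are handled on a different footing. The clean final form hinges precisely on the identity $\nabla^t\bigl(\fr12|x_t|^2\bigr)=x_t^\top$, which lets the awkward $\xi$-dependent part of the gradient be collected into the single tangential correction $\nabla^t\bigl(\lagl x_t,\xi\ragl-\fr12|\xi|^2\bigr)$ appearing in \eqref{1vf} (respectively $\nabla^t\lagl x_t,\xi\ragl$ in \eqref{1wf}); getting this decomposition right is the one place where real care is needed.
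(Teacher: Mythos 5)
Your proposal is correct and follows essentially the same route as the paper's proof: differentiate the weight and the volume element separately, integrate the divergence term by parts against $e^{-f_t}$, and use $\nabla^t\bigl(\fr12|x_t|^2\bigr)=x_t^\top$ to recombine $-\lagl x_t-\xi,\pp{F}{t}\ragl+\lagl x_t^\top,\pp{F}{t}\ragl$ into $-\lagl x_t^\bot-\xi,\pp{F}{t}\ragl$ plus the tangential correction $\nabla^t\bigl(\lagl x_t,\xi\ragl-\fr12|\xi|^2\bigr)$. The only difference is presentational (you phrase the integration by parts invariantly via $\dv(e^{-f_t}Z^\top)$ where the paper works in coordinates), so no further comment is needed.
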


\begin{proof} For simplicity, we shall always write $f=f_t$ in the computation.
It is well known that
\begin{align*}
\pp{}{t}dV_t=&\left(\dv\left(\pp{F}{t}\right)^\top -\lagl H_t,\pp{F}{t}\ragl\right)dV_t\\
=&\left(\left(g_t^{ij}\lagl\pp{F}{u^i},\pp{F}{t}\ragl\right)_{,j}-\lagl H_t,\pp{F}{t}\ragl\right)dV_t.
\end{align*}
Furthermore
$$
\pp{}{t}e^{-f}=-e^{-f}\pp{f}{t}=-e^{-f}\lagl x_t-\xi,\pp{F}{t}\ragl.
$$
Thus by using the divergence theorem, we find
\begin{align*}
V_\xi'(t)=&\int_M\pp{}{t}\left(e^{-f}dV_t\right)=\int_M\left(\pp{}{t}e^{-f})dV_t+e^{-f}\pp{}{t}dV_t\right)\\
=&\int_M\left(-e^{-f}\lagl x_t-\xi,\pp{F}{t}\ragl  +e^{-f}\left(\left(g_t^{ij}\lagl\pp{F}{u^i},\pp{F}{t}\ragl\right)_{,j}-\lagl H_t,\pp{F}{t}\ragl\right)\right) dV_t\\
=&-\int_M\left(\lagl H_t+ x_t^\bot-\xi,\pp{F}{t}\ragl +g_t^{ij}\pp{}{u^j}\left(\lagl x_t,\xi\ragl-\fr12|\xi|^2\right)\pp{F}{u^i},\pp{F}{t}\ragl \right)e^{-f}dV_t\\
=&-\int_M\left(\lagl (H_t+ x_t^\bot-\xi)+\nabla^t\left(\lagl x_t,\xi\ragl-\fr12|\xi|^2\right),\pp{F}{t}\ragl \right)e^{-f}dV_t,
\end{align*}
which gives \eqref{1vf}. The other formula \eqref{1wf} is derived in the same way.
\end{proof}

\begin{cor}[Variational characterizations]\label{cor1}
An immersion $x\in{\mathcal M}$ is a $\xi$-submanifold if and only if there exists a parallel normal vector field $\xi\in\Gamma(T^\bot M)$ such that $x$ is the critical point of both the functionals $V_\xi$, $\bar V_\xi$ for all the compact normal variations of $x$.
\end{cor}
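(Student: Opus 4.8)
The plan is to read the corollary off directly from the first variation formulas \eqref{vf0} and \eqref{wf0} of Theorem \ref{main2}; the only analytic input needed is the fundamental lemma of the calculus of variations, together with the strict positivity of the weights $e^{-f}$ and $e^{-\bar f}$. So the proof will be short and will not require any new computation beyond Theorem \ref{main2}.

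For the necessity (``only if'') direction I would begin from the assumption that $x$ is a $\xi$-submanifold, so that by the Definition there is a parallel normal field $\xi\in\Gamma(T^\bot M)$ with $H+x^\bot=\xi$. Choosing precisely this $\xi$ as the parameter of the two functionals, the integrand $H+x^\bot-\xi$ appearing in \eqref{vf0} and \eqref{wf0} vanishes identically; hence $V_\xi'(0)=\bar V_\xi'(0)=0$ for every compact normal variation $\eta\in\Gamma_0(T^\bot M)$, and $x$ is simultaneously a critical point of $V_\xi$ and of $\bar V_\xi$.

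For the sufficiency (``if'') direction I would assume that there is a parallel normal field $\xi$ for which $x$ is critical, say $V_\xi'(0)=0$ for all $\eta\in\Gamma_0(T^\bot M)$. Since $H$, $x^\bot$ and the chosen $\xi$ are all normal to $x$, the field $W:=H+x^\bot-\xi$ lies in $\Gamma(T^\bot M)$, and is therefore itself an admissible test direction. Feeding $\eta=\phi\,W$ into \eqref{vf0}, where $\phi\geq 0$ is an arbitrary compactly supported smooth bump function, yields $\int_M\phi\,|W|^2 e^{-f}\,dV=0$; the positivity of $e^{-f}$ and the freedom in the choice of $\phi$ then force $W\equiv 0$, that is $H+x^\bot=\xi$. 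As $\xi$ is parallel, $x$ is a $\xi$-submanifold by definition.

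There is essentially no serious obstacle here, since the content is already packaged in Theorem \ref{main2}. The only point deserving a little care is the admissibility of the test fields in the localization step: one must check that $\phi\,W$ is indeed a compactly supported smooth normal field (which it is, being the product of a compactly supported function with the smooth normal field $W$), so that it genuinely belongs to $\Gamma_0(T^\bot M)$ and the hypothesis of criticality applies to it. I would also note that the sufficiency direction in fact uses the criticality of only one of the two functionals; requiring both, as in the statement, is a consistent but redundant hypothesis, whereas the necessity direction delivers the criticality of both at once with the same $\xi$.
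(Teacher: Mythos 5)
Your proposal is correct and matches the paper's (implicit) treatment: the corollary is stated as an immediate consequence of the first variation formulas \eqref{vf0} and \eqref{wf0}, with necessity following from the vanishing of $H+x^\bot-\xi$ and sufficiency from the standard localization argument with test fields $\phi\,(H+x^\bot-\xi)$. Your added remarks on admissibility of the test field and the redundancy of requiring criticality of both functionals are accurate.
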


To find the second variational formulas, we suppose that $x$ is a $\xi$-submanifold, that is, $H+x^\bot=\xi $, where $\xi$ is a parallel normal vector of $x$.  In particular, $|\xi|^2$ is a constant. Note that in this case, the two functionals $V_\xi$ and $\bar V_\xi$ are essentially the same. So in the argument that follows we only need to consider $V_\xi$.

Suppose that $F$ is a compact normal variation of $x$. Then

\begin{align}
V_\xi''(0)=&-\int_M
\lagl D_{\pp{}{t}}\left((H_{t}+x_{t}^\bot-\xi)+\nabla^t\lagl x_t,\xi\ragl\right),\pp{F}{t}\ragl |_{t=0}e^{-f}dV\nnm\\
&-\int_M
\lagl \nabla^t\lagl x_t,\xi\ragl, D_{\pp{}{t}}\pp{F}{t}\ragl|_{t=0} e^{-f}dV\nnm\\
=&-\int_M
\lagl D_{\pp{}{t}}\left((H_{t}+x_{t}^\bot-\xi)+\nabla^t\lagl x_t,\xi\ragl\right)|_{t=0},\eta\ragl e^{-f}dV\nnm\\
&-\int_M
\lagl\nabla\lagl x,\xi\ragl, D_{\pp{}{t}}\pp{F}{t}|_{t=0}\ragl e^{-f}dV.
\end{align}
Since
$$H_{t}=(g_{t})^{ij}h_t(\pp{}{u^i},\pp{}{u^j})=(g_{t})^{ij}\left( D_{\pp{}{u^j}}(x_{t})_{*}\pp{}{u^i}-(x_{t})_{*}\nabla_{\pp{}{u^j}}^{t}\pp{}{u^i}\right),$$
we have
\begin{align}
 D_{\pp{}{t}}H_{t}=\pp{}{t}(g_{t})^{ij}h_t(\pp{}{u^i},\pp{}{u^j})
+(g_{t})^{ij} D_\pp{}{t}\left( D_{\pp{}{u^j}}(x_{t})_{*}\pp{}{u^i}-(x_{t})_{*}\nabla_{\pp{}{u^j}}^{t}\pp{}{u^i}\right).
\end{align}

On the other hand
\begin{align*}
\left(\pp{}{t}(g_{t})^{ij}\right)|_{t=0}=&-\left(\left((g_{t})^{ik}(g_{t})^{jl}\lagl D_{\pp{}{t}}\pp{F}{u^k},\pp{F}{u^l}\ragl+
\lagl \pp{F}{u^k}, D_{\pp{}{t}}\pp{F}{u^l}\ragl \right) \right)|_{t=0}\\
=&-g^{ik}g^{jl}\left(\pp{}{u^k}\lagl \pp{F}{t},\pp{F}{u^l}\ragl-\lagl \pp{F}{t} , D_{\pp{}{u^k}}\pp{F}{u^l}\ragl\right)|_{t=0}\\&-g^{ik}g^{jl}\left(\pp{}{u^l}\lagl \pp{F}{t},\pp{F}{u^k}\ragl-\lagl \pp{F}{t} , D_{\pp{}{u^l}}\pp{F}{u^k}\ragl\right)|_{t=0}\\
=&g^{ik}g^{jl}\lagl h(\pp{}{u^k},\pp{}{u^l}),\eta\ragl+g^{ik}g^{jl}\lagl h(\pp{}{u^l},\pp{}{u^k}),\eta\ragl,
\end{align*}
and by the flatness of $\bbr^{m+p}$,
\begin{align*}
 D_\pp{}{t} D_{\pp{}{u^j}}\pp{F}{u^i}|_{t=0} =&D_\pp{}{u^j} D_\pp{}{t}(x_t)_*\pp{}{u^i}
+ D_{[\pp{}{t},\pp{}{u^j}]}(x_t)_*\pp{}{u^i}|_{t=0}\\
=&D_{\pp{}{u^j}}(D_\pp{}{u^i}^\bot\eta
-x_*(A_\eta \pp{}{u^i}))\\
=&D_{\pp{}{u^j}}^\bot D_\pp{}{u^i}^\bot\eta
-h(\pp{}{u^j},A_\eta (\pp{}{u^i}))\\
&-x_{*}(A_{D_{\pp{}{u^i}}^\bot\eta}\pp{}{u^j})-x_{*}(\nabla_{\pp{}{u^j}}(A_\eta \pp{}{u^i}))
\end{align*}
where $A_\eta$ is the Weingarten operator of $x$ with respect to the variation vector $\eta$.
Moreover
\begin{align*}
 D_\pp{}{t}((x_{t})_{*}\nabla_{\pp{}{u^j}}^{t}\pp{}{u^i})|_{t=0}
=& D_\pp{}{t}((\Gamma_{t})_{ij}^{k}(x_{t})_{*}\pp{}{u^k})|_{t=0}\\
=&\pp{}{t}((\Gamma_{t})_{ij}^{k})|_{t=0}x_*\pp{}{u^k}+\Gamma_{ij}^{k} D_\pp{}{t}\left(\pp{F}{u^k}\right)|_{t=0}\\
=&\pp{}{t}((\Gamma_{t})_{ij}^{k})|_{t=0}x_*\pp{}{u^k}+D_{\nabla_{\pp{}{u^j}}\pp{}{u^i}}\eta.
\end{align*}
It then follows that
\begin{align}
&\lagl\pp{}{t}(g_{t})^{ij} h_t(\pp{}{u^i},\pp{}{u^j})|_{t=0},\eta\ragl
=2g^{ik}g^{jl}\lagl h({\pp{}{u^k},{\pp{}{u^l}}}),\eta\ragl \lagl h({\pp{}{u^i},{\pp{}{u^j}}}),\eta\ragl,\\
&g^{ij}\lagl D_\pp{}{t} D_{\pp{}{u^j}}\pp{F}{u^i}|_{t=0},\eta\ragl
=g^{ij}\left(\lagl D_{\pp{}{u^j}}^\bot D_\pp{}{u^i}^\bot\eta
-h(\pp{}{u^j},A_\eta (\pp{}{u^i})),\eta\ragl\right),\\
&g^{ij}\left(\lagl D_\pp{}{t}((x_{t})_{*}\nabla_{\pp{}{u^j}}^{t}\pp{}{u^i})|_{t=0},\eta\ragl\right)=g^{ij}\lagl D_{\nabla_{\pp{}{u^j}}\pp{}{u^i}}^\bot \eta,\eta\ragl.
\end{align}
Hence
\begin{align*}
\lagl  D_\pp{}{t}H_{t}|_{t=0},\eta\ragl
=&\lagl g^{ij}(D_{\pp{}{u^i}}^\bot D_\pp{}{u^j}^\bot\eta-D_{\nabla_{\pp{}{u^i}}\pp{}{u^j}}^\bot \eta),\eta\ragl\\
&+g^{ik}g^{jl}\lagl h(\pp{}{u^k},\pp{}{u^l}),\eta\ragl \lagl h(\pp{}{u^i},\pp{}{u^j}),\eta\ragl\\
=&\lagl \triangle_{M}^\bot\eta ,\eta\ragl+g^{ik}g^{jl}\lagl h(\pp{}{u^k},\pp{}{u^l}),\eta\ragl \lagl h(\pp{}{u^i},\pp{}{u^j}),\eta\ragl\\
=&\lagl \triangle_{M}^\bot\eta+g^{ik}g^{jl}\lagl h_{ij},\eta\ragl h_{kl} ,\eta\ragl,
\end{align*}
where $h_{ij}=h(\pp{}{u^i},\pp{}{u^j})$. Furthermore,
\begin{align*}
&\lagl  D_\pp{}{t} (x_{t}^\bot-\xi)|_{t=0},\eta\ragl
=\lagl D_\pp{}{t} x_{t}|_{t=0}- D_\pp{}{t}(x_t)^\top|_{t=0},\eta\ragl\\
=&\lagl \eta,\eta\ragl-\lagl D_\pp{}{t} \left((g_t)^{ij}\lagl x_{t},\pp{F}{u^i}\ragl\pp{F}{u^j}\right) |_{t=0},\eta\ragl\\
=&\lagl \eta,\eta\ragl-\lagl  D_{x^\top}\eta,\eta\ragl=\lagl \eta,\eta\ragl-\lagl D^\bot_{x^\top}\eta,\eta\ragl.
\end{align*}
Therefore
\begin{align*}
\lagl D_{\pp{}{t}}(H_{t}+x_{t}^\bot-\xi),\pp{F}{t}\ragl|_{t=0}=\lagl \triangle_{M}^\bot\eta-D^\bot_{x^\top}\eta+g^{ik}g^{jl}\lagl h_{ij},\eta\ragl h_{kl} +\eta,\eta\ragl
\end{align*}

Meanwhile,
\begin{align*}
\lagl D_{\pp{}{t}}(\nabla^t\lagl x_t,\xi\ragl)|_{t=0},\eta\ragl
=&\lagl (g_t)^{ij}\pp{}{u^i}\lagl x_t,\xi\ragl D_{\pp{}{t}}\pp{F}{u^i}|_{t=0},\eta\ragl\\
=&\lagl g^{ij}\pp{}{u^i}\lagl x,\xi\ragl D_{\pp{}{u^i}}\eta,\eta\ragl=\lagl D^\bot_{\nabla\lagl x,\xi\ragl}\eta,\eta\ragl\\
=&-\lagl D^\bot_{A_\xi(x^\top)}\eta,\eta\ragl
\end{align*}
since $\xi$ is parallel along $x$.

Note that the ambient space $\bbr^{m+p}$ is flat and $|\xi|^2$ is constant on $M^m$. Thus, by summing up, we have proved the following second variation formulas for $\xi$-submanifolds:

\begin{thm}\label{main3} Let $x:M^m\to\bbr^{m+p}$ be a $\xi$-submanifold. Then for any compact normal variation $F:M^m\times(-\veps,\veps)\to\bbr^{m+p}$ we have
\begin{align}
V_{\xi}^{''}(0)=&-\int_M\Big(\lagl\triangle_{M}^\bot(\eta) ,\eta\ragl-\lagl D^\bot_{x^\top+A_\xi(x^\top)}\eta+g^{ik}g^{jl}\lagl h_{ij},\eta\ragl h_{kl}+\eta,\eta\ragl\nnm\\
&+\lagl \nabla \lagl x,\xi\ragl, D_{\pp{}{t}}\pp{F}{t}|_{t=0}\ragl\Big)e^{-f}dV,\label{2vf}\\
\bar V_\xi^{''}(0)=&-\int_M\Big(\lagl\triangle_{M}^\bot(\eta) ,\eta\ragl-\lagl D^\bot_{x^\top+A_\xi(x^\top)}\eta
+g^{ik}g^{jl}\lagl h_{ij},\eta\ragl h_{kl} +\eta,\eta\ragl\nnm\\
&+\lagl \nabla \lagl x,\xi\ragl, D_{\pp{}{t}}\pp{F}{t}|_{t=0}\ragl\Big)e^{-\bar f}dV.\label{2wf}
\end{align}
\end{thm}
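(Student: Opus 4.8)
The plan is to differentiate the first variation formula \eqref{1vf} a second time and evaluate at $t=0$, exploiting throughout that $x$ is already a $\xi$-submanifold. Since $|\xi|^2$ is constant along a $\xi$-submanifold, the $\frac12|\xi|^2$ term in \eqref{1vf} has vanishing gradient, so I abbreviate $W_t := (H_t + x_t^\bot - \xi) + \nabla^t\langle x_t,\xi\rangle$, giving $V_\xi'(t) = -\int_M \langle W_t, \partial F/\partial t\rangle\, e^{-f_t}\,dV_t$. Differentiating in $t$ produces three groups of terms, according to whether $D_{\partial/\partial t}$ falls on $W_t$, on $\partial F/\partial t$, or on the weighted volume $e^{-f_t}dV_t$. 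At $t=0$ one has $H + x^\bot - \xi = 0$, so $W_0 = \nabla\langle x,\xi\rangle$ is tangential; as the variation is normal, $\langle W_0,\eta\rangle = 0$, which annihilates the group where the derivative falls on $e^{-f_t}dV_t$. Only two contributions survive: $-\int_M \langle D_{\partial/\partial t}W_t|_{t=0},\eta\rangle\, e^{-f}dV$, and the leftover term $-\int_M \langle \nabla\langle x,\xi\rangle, D_{\partial/\partial t}(\partial F/\partial t)|_{t=0}\rangle\, e^{-f}dV$ involving the acceleration of the variation, which admits no further simplification.

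The heart of the argument is evaluating $\langle D_{\partial/\partial t}W_t|_{t=0},\eta\rangle$, which I split into three pieces. For $D_{\partial/\partial t}H_t|_{t=0}$, write $H_t = g_t^{ij}h_t(\partial_i,\partial_j)$ with $h_t(\partial_i,\partial_j) = D_{\partial_j}(x_t)_*\partial_i - (x_t)_*\nabla^t_{\partial_j}\partial_i$; differentiating the inverse metric (through $\partial_t g_{ij}|_{t=0} = -2\langle h_{ij},\eta\rangle$ for a normal variation) and commuting $D_{\partial/\partial t}$ past the two spatial covariant derivatives, where flatness of $\bbr^{m+p}$ excludes any ambient curvature term, the metric-variation term and a Weingarten term $-g^{ij}h(\partial_j, A_\eta\partial_i)$ combine to leave, on the normal side, exactly $\triangle^\bot_M\eta + g^{ik}g^{jl}\langle h_{ij},\eta\rangle h_{kl}$. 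For $D_{\partial/\partial t}(x_t^\bot - \xi)|_{t=0}$, since $\xi$ does not depend on $t$ and $x_t^\bot = x_t - x_t^\top$ with $D_{\partial/\partial t}x_t = \eta$, the normal part reduces to $\eta - D^\bot_{x^\top}\eta$. For $D_{\partial/\partial t}(\nabla^t\langle x_t,\xi\rangle)|_{t=0}$, the parallelism of $\xi$ gives $\nabla\langle x,\xi\rangle = -A_\xi(x^\top)$, whose $t$-derivative contributes the normal term $-D^\bot_{A_\xi(x^\top)}\eta$.

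Assembling the three pieces, the two covariant-derivative terms merge into $-D^\bot_{x^\top + A_\xi(x^\top)}\eta$, and substituting back into $V_\xi''(0)$ gives precisely the integrand of \eqref{2vf}. The companion formula \eqref{2wf} follows verbatim: because $\bar f = f - \frac12|\xi|^2$ with $|\xi|^2$ constant along a $\xi$-submanifold, the whole derivation repeats with $e^{-f}$ replaced by $e^{-\bar f}$, which is the precise sense in which $V_\xi$ and $\bar V_\xi$ are ``essentially the same'' here.

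I expect the main obstacle to be the bookkeeping inside $D_{\partial/\partial t}H_t|_{t=0}$: one must carefully separate tangential from normal components at every stage, legitimately commute the time derivative through the two spatial covariant derivatives (using $[\partial/\partial t,\partial_j]=0$ and flatness to exclude curvature), and verify that the various tangential and Weingarten contributions either cancel or project away when paired with the normal field $\eta$. By contrast the derivatives of $x_t^\bot - \xi$ and of $\nabla^t\langle x_t,\xi\rangle$ are routine once one retains only normal components, since every tangential term is killed by the inner product with $\eta$.
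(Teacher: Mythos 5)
Your proposal is correct and follows essentially the same route as the paper: differentiate the first variation formula, use $H+x^\bot-\xi=0$ together with the normality of $\eta$ to kill the terms where the derivative hits $e^{-f_t}dV_t$, and then evaluate $D_{\partial/\partial t}H_t$, $D_{\partial/\partial t}(x_t^\bot-\xi)$ and $D_{\partial/\partial t}\nabla^t\langle x_t,\xi\rangle$ exactly as the paper does, with the same cancellation between the metric-variation term $2|A_\eta|^2$ and the Weingarten term $-|A_\eta|^2$, and the same identity $\nabla\langle x,\xi\rangle=-A_\xi(x^\top)$. The resulting integrand agrees with the paper's (as written more cleanly in Corollary \ref{smplf}), so no gap remains.
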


In order to simplify the second variation formulas we introduce the following definition:

\begin{dfn}[$SN$-variation]\label{uni-nml-var}\rm A variation $F:M^m\times (-\veps,\veps)\to \bbr^{m+p}$ of an immersion $x:M^m\to \bbr^{m+p}$ is called specially normal (or simply $SN$) if it is normal and $\pp{^2F}{t^2}|_{t=0}=0$.
\end{dfn}

Clearly, for any $\eta\in\Gamma(T^\bot M)$, $SN$-variations with variation vector field $\eta$ do exist. For example, we can choose
$$F(p,t)=x(p)+\psi(t)\eta(p),\quad \forall\, (p,t)\in M^m\times(-\veps,\veps)$$
where $\psi$ is any smooth function satisfying $\psi(0)=\psi''(0)=0$, $\psi'(0)=1$.

\begin{cor}[The simplified second variation formulas]\label{smplf} Let $x:M^m\to\bbr^{m+p}$ be a $\xi$-submanifold. Then for any compact $SN$-variation $F:M^m\times(-\veps,\veps)\to\bbr^{m+p}$ it holds that
\begin{align}
V_{\xi}^{''}(0)=&-\int_M\Big(\lagl(\triangle_{M}^\bot-D^\bot_{x^\top+A_\xi(x^\top)}+1)\eta+g^{ik}g^{jl}\lagl h_{ij},\eta\ragl h_{kl} ,\eta\ragl\Big)e^{-f}dV,\label{s2vf}\\
\bar V_\xi^{''}(0)=&-\int_M\Big(\lagl(\triangle_{M}^\bot-D^\bot_{x^\top+A_\xi(x^\top)}+1)\eta+g^{ik}g^{jl}\lagl h_{ij},\eta\ragl h_{kl} ,\eta\ragl\Big)e^{-\bar f}dV.\label{s2wf}
\end{align}
\end{cor}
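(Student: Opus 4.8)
The plan is to read off the corollary as an immediate specialization of the general second variation formulas \eqref{2vf} and \eqref{2wf} of Theorem \ref{main3}. The only thing requiring attention is to show that the single term distinguishing those formulas from \eqref{s2vf}--\eqref{s2wf}, namely $\lagl \nabla\lagl x,\xi\ragl, D_{\pp{}{t}}\pp{F}{t}|_{t=0}\ragl$, vanishes for an $SN$-variation; everything else is bookkeeping.

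First I would recall that the ambient space $\bbr^{m+p}$ is flat, so that in Cartesian coordinates the Levi-Civita connection $D$ is just the ordinary directional derivative. Applying this to the vector field $\pp{F}{t}$ along $F$ gives $D_{\pp{}{t}}\pp{F}{t}=\pp{^2F}{t^2}$. By Definition \ref{uni-nml-var} an $SN$-variation satisfies $\pp{^2F}{t^2}|_{t=0}=0$, whence $D_{\pp{}{t}}\pp{F}{t}|_{t=0}=0$ and the term $\lagl \nabla\lagl x,\xi\ragl, D_{\pp{}{t}}\pp{F}{t}|_{t=0}\ragl$ is identically zero. I would stress that the normality half of the $SN$-condition is exactly what is needed for Theorem \ref{main3} to apply in the first place, so both parts of Definition \ref{uni-nml-var} are used.

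With that term removed, the remaining integrand in \eqref{2vf} consists of the second-order term $\triangle_M^\bot\eta$, the first-order term $-D^\bot_{x^\top+A_\xi(x^\top)}\eta$, the zeroth-order term $\eta$ (the contribution coming from differentiating the position vector), and the algebraic term $g^{ik}g^{jl}\lagl h_{ij},\eta\ragl h_{kl}$, all paired against $\eta$. Grouping the first three as $(\triangle_M^\bot-D^\bot_{x^\top+A_\xi(x^\top)}+1)\eta$ produces \eqref{s2vf} verbatim; repeating the identical step for \eqref{2wf}, with the weight $e^{-\bar f}$ in place of $e^{-f}$, gives \eqref{s2wf}. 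Since $x$ is a $\xi$-submanifold the parallel field $\xi$ has constant length, so $f$ and $\bar f$ differ only by the additive constant $\frac12|\xi|^2$ and the two integrands indeed coincide, as already noted before Theorem \ref{main3}.

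I do not anticipate any genuine obstacle: the statement is a direct consequence of Theorem \ref{main3} once the vanishing of $D_{\pp{}{t}}\pp{F}{t}|_{t=0}$ is observed. The only place demanding a moment's care is the identification $D_{\pp{}{t}}\pp{F}{t}=\pp{^2F}{t^2}$, i.e.\ the use of the flatness of the Euclidean connection; this is precisely why an $SN$-variation is defined through the vanishing of the ordinary second derivative rather than a covariant one.
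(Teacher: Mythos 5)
Your proposal is correct and matches the paper's (implicit) argument exactly: the corollary is Theorem \ref{main3} with the term $\lagl \nabla\lagl x,\xi\ragl, D_{\pp{}{t}}\pp{F}{t}|_{t=0}\ragl$ killed by the $SN$-condition $\pp{^2F}{t^2}|_{t=0}=0$, using flatness of $\bbr^{m+p}$ to identify $D_{\pp{}{t}}\pp{F}{t}$ with $\pp{^2F}{t^2}$. Nothing further is needed.
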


{\rmk\label{rmk}\rm From the above discussion, one may naturally think of the variational characterization of the usual submanifolds with parallel mean curvature vector in the Euclidean space. In fact, our computations and argument of those two sections essentially apply to this situation. In particular, a suitable functional $\td V_\xi$ may be defined by}
$$
\td V_\xi=\int_M e^{\lagl x,\xi\ragl}dV_x,\quad \forall x\in {\mathcal M}
$$
and the first variation formula of $\td V_\xi$ is given in the following
\begin{prop}\label{prop2} Let $x\in {\mathcal M}$ be fixed and $\xi:M^m\to\bbr^{m+p}$ be a smooth map. Suppose that $F$ is a compact variation of $x$. Then
\be\label{1tdv}
\td V_\xi'(t)=-\int_M
\lagl(H_{t}-\xi)+\nabla^t\lagl x_t,\xi\ragl,\pp{F}{t}\ragl e^{\lagl x,\xi\ragl}dV_t.
\ee
In particular, if $F$ is a normal variation of $x$, then
\be\label{tdv0}
\td V_\xi'(0)=-\int_M\lagl H-\xi,\eta\ragl e^{\lagl x,\xi\ragl}dV.
\ee
\end{prop}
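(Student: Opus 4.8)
The plan is to follow the proof of Theorem \ref{main2} essentially verbatim, the only change being that the weight $e^{-f}$ is replaced by the simpler weight $e^{\lagl x,\xi\ragl}$, whose exponent $\lagl x,\xi\ragl$ is linear rather than quadratic in $x$. First I would recall the two standard ingredients used there. One is the first variation of the Riemannian volume element,
\[
\pp{}{t}dV_t=\left(\dv\left(\pp{F}{t}\right)^\top-\lagl H_t,\pp{F}{t}\ragl\right)dV_t.
\]
The other is the variation of the weight; since $\xi$ is a fixed map on $M^m$ (independent of $t$), this is simply
\[
\pp{}{t}e^{\lagl x_t,\xi\ragl}=e^{\lagl x_t,\xi\ragl}\lagl\pp{F}{t},\xi\ragl.
\]
This is the one place where the computation genuinely differs from the $V_\xi$ case: differentiating the linear weight produces only the term $\lagl\xi,\pp{F}{t}\ragl$, with no contribution involving $x_t$ itself, so that (unlike in Theorem \ref{main2}) no $x^\top$, and hence eventually no $x^\bot$, term will appear.

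Next I would assemble the two pieces,
\[
\td V_\xi'(t)=\int_M\pp{}{t}\left(e^{\lagl x_t,\xi\ragl}dV_t\right)=\int_M e^{\lagl x_t,\xi\ragl}\left(\lagl\pp{F}{t},\xi\ragl+\dv\left(\pp{F}{t}\right)^\top-\lagl H_t,\pp{F}{t}\ragl\right)dV_t,
\]
and integrate the divergence term by parts. Because $F$ is a compact variation the boundary contribution vanishes, and using that $\nabla^t e^{\lagl x_t,\xi\ragl}=e^{\lagl x_t,\xi\ragl}\nabla^t\lagl x_t,\xi\ragl$ is tangent (so that its inner product with $\left(\pp{F}{t}\right)^\top$ agrees with its inner product against the full field $\pp{F}{t}$), I obtain
\[
\int_M e^{\lagl x_t,\xi\ragl}\dv\left(\pp{F}{t}\right)^\top dV_t=-\int_M e^{\lagl x_t,\xi\ragl}\lagl\nabla^t\lagl x_t,\xi\ragl,\pp{F}{t}\ragl dV_t.
\]
Collecting the three terms and rewriting $\lagl\pp{F}{t},\xi\ragl-\lagl H_t,\pp{F}{t}\ragl=-\lagl H_t-\xi,\pp{F}{t}\ragl$ then yields \eqref{1tdv} directly.

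Finally, for the normal-variation statement I would specialize to $t=0$ with $\pp{F}{t}|_{t=0}=\eta\in\Gamma_0(T^\bot M)$. Since the gradient $\nabla\lagl x,\xi\ragl$ is tangent while $\eta$ is normal, the pairing $\lagl\nabla\lagl x,\xi\ragl,\eta\ragl$ vanishes and \eqref{1tdv} collapses to \eqref{tdv0}. I expect no genuine obstacle here: this is a routine first-variation computation of exactly the type already carried out for $V_\xi$ and $\bar V_\xi$. The only point deserving care is the sign-and-term bookkeeping, specifically confirming that the linear weight $e^{\lagl x,\xi\ragl}$ contributes no $x^\bot$ term, so that the normal part of the resulting Euler--Lagrange expression is precisely $H-\xi$ (the parallel mean curvature condition when $\xi$ is parallel) rather than the $H+x^\bot-\xi$ of the $\xi$-submanifold equation.
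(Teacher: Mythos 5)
Your proposal is correct and follows exactly the route the paper intends: the paper states Proposition \ref{prop2} without writing out a proof precisely because it is the verbatim analogue of the proof of Theorem \ref{main2}, with the quadratic weight $e^{-f}$ replaced by the linear weight $e^{\lagl x,\xi\ragl}$, whose $t$-derivative contributes only the $\lagl\xi,\pp{F}{t}\ragl$ term and hence removes the $x^\bot$ from the Euler--Lagrange expression. Your handling of the integration by parts and of the vanishing of $\lagl\nabla\lagl x,\xi\ragl,\eta\ragl$ for normal $\eta$ matches the paper's computation for $V_\xi$.
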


\begin{cor}\label{cor4}
An immersion $x\in{\mathcal M}$ has a parallel mean curvature vector if and only if there exists a parallel normal vector field $\xi\in\Gamma(T^\bot M)$ such that $x$ is the critical point of the functional $\td V_\xi$ for all the compact normal variations of $x$.
\end{cor}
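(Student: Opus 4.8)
The plan is to deduce Corollary~\ref{cor4} directly from the first variation formula of Proposition~\ref{prop2}, in exact parallel to the way Corollary~\ref{cor1} follows from Theorem~\ref{main2}. The key observation is that along a \emph{normal} variation the tangential gradient term $\nabla^t\lagl x_t,\xi\ragl$ in \eqref{1tdv} is orthogonal to $\eta$ and so drops out, leaving the clean expression \eqref{tdv0}. Thus the entire argument reduces to understanding when $\int_M\lagl H-\xi,\eta\ragl e^{\lagl x,\xi\ragl}\,dV$ vanishes for every compactly supported normal field $\eta\in\Gamma_0(T^\bot M)$.

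For the forward implication, suppose $x$ has parallel mean curvature vector $H$. I would take $\xi:=H$; this is a parallel normal vector field, hence an admissible choice of the $\bbr^{m+p}$-valued function parametrizing $\td V_\xi$. Substituting $\xi=H$ into \eqref{tdv0} makes the integrand vanish identically, so $\td V_\xi'(0)=0$ for every compact normal variation, and $x$ is a critical point of $\td V_\xi$.

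For the converse, assume there is a parallel normal field $\xi$ for which $\td V_\xi'(0)=0$ along all compact normal variations. By \eqref{tdv0} this means $\int_M\lagl H-\xi,\eta\ragl e^{\lagl x,\xi\ragl}\,dV=0$ for all $\eta\in\Gamma_0(T^\bot M)$. Since both $H$ and $\xi$ are normal, $H-\xi$ is itself a normal field, and since $e^{\lagl x,\xi\ragl}>0$ everywhere I would invoke the fundamental lemma of the calculus of variations: were $H-\xi$ nonzero at a point, taking $\eta=\phi\,(H-\xi)$ with $\phi\geq0$ a compactly supported bump positive there would make the integrand $\phi\,|H-\xi|^2e^{\lagl x,\xi\ragl}\geq0$ with strictly positive integral, a contradiction. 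Hence $H=\xi$, and the assumed parallelism of $\xi$ forces $H$ to be parallel.

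The whole argument is routine once Proposition~\ref{prop2} is available; the only steps deserving attention are that $\xi$ be taken \emph{normal} (so that the normal test fields $\eta$ detect all of $H-\xi$ rather than merely its normal part) and that the localizing factor $\phi\,(H-\xi)$ in the converse genuinely have compact support so as to be an admissible variation field. I expect these to be the mildest of obstacles, the substantive computation having already been carried out in deriving the first variation formula \eqref{1tdv}.
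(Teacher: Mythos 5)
Your proposal is correct and follows exactly the route the paper intends: Corollary \ref{cor4} is stated as an immediate consequence of the first variation formula \eqref{tdv0} of Proposition \ref{prop2}, with the forward direction given by choosing $\xi=H$ and the converse by the fundamental lemma of the calculus of variations applied to the test field $\eta=\phi\,(H-\xi)$, precisely as you do. The paper supplies no further argument, so there is nothing in your write-up to correct.
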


Moreover, the second variation formula for a submanifold $x:M^m\to \bbr^{m+p}$ with parallel mean curvature vector $H\equiv\xi$ should be described as

\begin{thm}\label{main5} Let $x:M^m\to\bbr^{m+p}$ be an immersed submanifold with parallel mean curvature $H$. Then for any compact normal variation $F:M^m\times(-\veps,\veps)\to\bbr^{m+p}$ we have
\be
\td V_H^{''}(0)=-\int_M\Big(\lagl\triangle_{M}^\bot(\eta) +D^\bot_{\nabla\lagl x,H\ragl} \eta,\eta\ragl+|A_\eta|^2+
\lagl \nabla \lagl x,H\ragl, D_{\pp{}{t}}\pp{F}{t}|_{t=0}\ragl\Big)e^{\lagl x,H\ragl}dV.\label{2tdv}
\ee
\end{thm}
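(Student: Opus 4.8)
The plan is to differentiate the first variation formula \eqref{1tdv} of Proposition \ref{prop2} a second time, evaluating at $t=0$ and taking $\xi$ to be the (fixed, $t$-independent) parallel mean curvature $H=H_0$ of $x=x_0$. Abbreviate $W_t:=(H_t-H)+\nabla^t\lagl x_t,H\ragl$ and $\phi_t:=e^{\lagl x_t,H\ragl}$, so that \eqref{1tdv} reads $\td V_H'(t)=-\int_M\lagl W_t,\pp{F}{t}\ragl\phi_t\,dV_t$. The decisive observation is that, because $H_0=H$, at $t=0$ one has $W_0=\nabla\lagl x,H\ragl$, which is a \emph{tangent} vector field; hence for a normal variation $\lagl W_0,\eta\ragl=0$. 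Differentiating $\td V_H'(t)$ produces three kinds of terms — those in which $\pp{}{t}$ hits $\lagl W_t,\pp{F}{t}\ragl$, those in which it hits $\phi_t$, and those in which it hits $dV_t$ — and the last two retain the undifferentiated factor $\lagl W_t,\pp{F}{t}\ragl$, which vanishes at $t=0$. Thus only $\td V_H''(0)=-\int_M\big(\pp{}{t}\lagl W_t,\pp{F}{t}\ragl\big)\big|_{t=0}\,\phi_0\,dV$ survives, with $\phi_0=e^{\lagl x,H\ragl}$.

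Next I would expand $\big(\pp{}{t}\lagl W_t,\pp{F}{t}\ragl\big)\big|_{t=0}=\lagl D_{\pp{}{t}}W_t|_{t=0},\eta\ragl+\lagl W_0,D_{\pp{}{t}}\pp{F}{t}|_{t=0}\ragl$. The second summand is exactly the acceleration term $\lagl\nabla\lagl x,H\ragl,D_{\pp{}{t}}\pp{F}{t}|_{t=0}\ragl$ in \eqref{2tdv}; it must be kept because $D_{\pp{}{t}}\pp{F}{t}|_{t=0}$ may have a tangential component that pairs nontrivially with the tangent field $W_0$. For the first summand I would reuse verbatim the two intermediate identities established in the proof of Theorem \ref{main3}. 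Since $H$ is regarded as a fixed map on $M$, $D_{\pp{}{t}}(H_t-H)=D_{\pp{}{t}}H_t$, and the same Simons-type computation there gives $\lagl D_{\pp{}{t}}H_t|_{t=0},\eta\ragl=\lagl\triangle_M^\bot\eta,\eta\ragl+g^{ik}g^{jl}\lagl h_{ij},\eta\ragl\lagl h_{kl},\eta\ragl$, whose quadratic part is precisely $|A_\eta|^2$.

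For the gradient contribution, the argument of Theorem \ref{main3} again applies: when $D_{\pp{}{t}}(\nabla^t\lagl x_t,H\ragl)$ is expanded, the piece in which $\pp{}{t}$ differentiates the coefficient $(g_t)^{ij}\pp{}{u^i}\lagl x_t,H\ragl$ stays tangent and so drops out upon pairing with the normal field $\eta$, while the piece in which $\pp{}{t}$ hits the frame $\pp{F}{u^j}$ yields, via the Weingarten formula, $\lagl D^\bot_{\nabla\lagl x,H\ragl}\eta,\eta\ragl$. Assembling the three pieces gives $\big(\pp{}{t}\lagl W_t,\pp{F}{t}\ragl\big)|_{t=0}=\lagl\triangle_M^\bot\eta+D^\bot_{\nabla\lagl x,H\ragl}\eta,\eta\ragl+|A_\eta|^2+\lagl\nabla\lagl x,H\ragl,D_{\pp{}{t}}\pp{F}{t}|_{t=0}\ragl$, and multiplying by $-\phi_0$ and integrating yields \eqref{2tdv}.

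I do not anticipate a genuine obstacle: the computation is structurally parallel to that of Theorem \ref{main3}, and in fact shorter, since $\td V_H$ carries no $x^\bot$ weight and hence none of the terms $\lagl\eta,\eta\ragl-\lagl D^\bot_{x^\top}\eta,\eta\ragl$ produced there by differentiating $x_t^\bot$ appear here. The only matters needing care are bookkeeping: $\xi=H$ must be treated as a $t$-independent map on $M$ (so that $D_{\pp{}{t}}H=0$, in contrast with $D_{\pp{}{t}}H_t$, the derivative of the mean curvature of the varying immersion), and the acceleration term must be retained because the statement concerns arbitrary compact normal variations rather than the $SN$-variations of Definition \ref{uni-nml-var}. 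It is also worth recording the identity $\nabla\lagl x,H\ragl=-A_H(x^\top)$, a consequence of $D^\bot H=0$, which rewrites $D^\bot_{\nabla\lagl x,H\ragl}$ in Weingarten-map form and exhibits the agreement of \eqref{2tdv} with the second variation in Corollary \ref{smplf}.
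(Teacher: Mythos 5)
Your proposal is correct and follows exactly the route the paper intends: the paper states Theorem \ref{main5} without a written proof, deferring to the computation of Theorem \ref{main3}, and your adaptation (dropping the terms from $\pp{}{t}$ hitting the weight and volume element via $\lagl W_0,\eta\ragl=0$, reusing the identities for $D_{\pp{}{t}}H_t$ and $D_{\pp{}{t}}\nabla^t\lagl x_t,H\ragl$, and keeping the acceleration term since the variation need not be $SN$) reproduces \eqref{2tdv} precisely. The closing remark about $\nabla\lagl x,H\ragl=-A_H(x^\top)$ matches the paper's own use of this identity in Theorem \ref{main3}.
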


\section{The instabilities of the canonical examples}

The most natural stability definition to the functional $V_\xi$ is as follows:

\begin{dfn}\label{stblty} A $\xi$-submanifold $x:M^m\to \bbr^{m+p}$ is called stable if $V_\xi(x)<+\infty$ and for every $SN$-variation $F:M^m\times (-\veps,\veps)\to \bbr^{m+p}$ of $x$ it holds that $V_\xi''(0)\geq 0$ or, equivalently, $\bar V_\xi''(0)\geq 0$.
\end{dfn}

In this section we shall show that,  as $\xi$-submanifolds, all the canonical examples given in Section 2 are not stable in the sense of Definition \ref{stblty}.

Write the second fundamental form $h$ of $x$ locally as $h=h_{ij}\omega^i\omega^j=h^\alpha_{ij}e_\alpha$ with respect to an orthonormal tangent frame field $\{e_i;\ 1\leq i\leq m\}$ with dual $\{\omega^i\}$ and an orthonormal normal frame field $\{e_\alpha;\ m+1\leq\alpha\leq m+p\}$, and denote
\be\label{5.1} {\mathcal L}=\triangle^\bot_{M^m}-D^\bot_{x^\top+A_\xi(x^\top)},\quad L={\mathcal L}+\lagl h_{ij},\cdot\ragl h_{ij} +1,\quad \td{\mathcal L}=\triangle_{M^m} -\nabla_{x^\top+A_\xi(x^\top)},\ee
where $\triangle^\bot_{M^m}$, $\triangle_{M^m}$ are Laplacians on $T^\bot M^m$, $TM^m$ respectively, and sometimes we shall omit the subscript ``$_{M^m}$'' if no confusion is made. It follows that
\be
Q(\eta,\eta):\equiv V_{\xi}^{''}(0)= -\int_M\lagl L(\eta) ,\eta\ragl e^{-f}dV,
\ee
and that, for any parallel normal vector field $N$,
\be\label{N}L(N)=N+\lagl h_{ij},N\ragl h_{ij}.\ee

\begin{lem}\label{lem6.2}
\be
L(\phi\eta)=(\td{\mathcal L}\phi)\eta+\phi L(\eta)+2D^\bot_{\nabla \phi}\eta,\quad \phi\in C^\infty(M^m),\, \eta\in\Gamma(T^\bot M^m).\label{eq6.5}
\ee
\end{lem}

\begin{proof} We compute directly
\begin{align*}
L(\phi\eta)=&\triangle^\bot(\phi\eta)-D^\bot_{x^\top+A_\xi(x^\top)}(\phi\eta)+\lagl h_{ij},\phi\eta\ragl h_{ij}+\phi\eta\\
=&(\triangle\phi)\eta+2D^\bot_{\nabla\phi}\eta+\phi\triangle^\bot\eta-(\nabla_{x^\top+A_\xi(x^\top)}\phi)\eta\\
&-\phi(D^\bot_{x^\top+A_\xi(x^\top)}\eta)+\phi\lagl h_{ij},\eta\ragl h_{ij}+\phi\eta\\
=&(\triangle-\nabla_{x^\top+A_\xi(x^\top)})\phi\eta+\phi(\triangle^\bot
-D^\bot_{x^\top+A_\xi(x^\top)}+\lagl h_{ij},\cdot\ragl h_{ij}+1)\eta+2D^\bot_{\nabla\phi}\eta\\
=&(\td{\mathcal L}\phi)\eta+\phi(L\eta)+2D^\bot_{\nabla\phi}\eta.
\end{align*}
\end{proof}

\begin{lem} Let $x:M^m\to \bbr^{m+p}$ be a $\xi$-submanifold. Then for any $\eta_1,\eta_2\in\Gamma(T^\bot M^m)$ one of which is compactly supported, it holds that
\be
\int_M\lagl\eta_1,{\mathcal L}\eta_2\ragl e^{-f}dV=-\int_M\lagl D^\bot\eta_1,D^\bot\eta_2\ragl e^{-f}dV.\label{eq6.7}
\ee
Similarly,  for any  $\phi_1, \phi_2\in C^\infty(M^m)$ one of which is compactly supported, it holds that
\be
\int_M\phi_1\td{\mathcal L}\phi_2 e^{-f}dV=-\int_M\lagl\nabla \phi_1,\nabla\phi_2\ragl e^{-f}dV.\label{eq6.7'}
\ee
\end{lem}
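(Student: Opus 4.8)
The plan is to recognize that both $\td{\mathcal L}$ and ${\mathcal L}$ are weighted (drift) Laplacians adapted precisely to the measure $e^{-f}dV$, after which each identity becomes a one-line weighted integration by parts. The decisive point is to show that the drift vector field common to both operators is exactly the $M$-gradient of the weight exponent, namely
\[
\nabla f = x^\top + A_\xi(x^\top),\qquad f=\tfrac12|x-\xi|^2.
\]
Granting this, $\td{\mathcal L}=\triangle-\nabla_{\nabla f}$ and ${\mathcal L}=\triangle^\bot-D^\bot_{\nabla f}$ are the canonical formally self-adjoint operators for the weighted inner product $\int_M\lagl\cdot,\cdot\ragl e^{-f}dV$, and \eqref{eq6.7} and \eqref{eq6.7'} follow at once.

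To establish the gradient formula I would differentiate $f$ along a tangent vector $e_i$: since $D_{e_i}x=e_i$ and, because $x$ is a $\xi$-submanifold so $\xi$ is parallel in $T^\bot M$ (whence $D^\bot_{e_i}\xi=0$ and $D_{e_i}\xi=-A_\xi(e_i)$), one gets $D_{e_i}(x-\xi)=e_i+A_\xi(e_i)$ and hence $e_i(f)=\lagl x-\xi,\,e_i+A_\xi(e_i)\ragl$. Splitting $x=x^\top+x^\bot$ and using that $x^\bot-\xi$ is normal while $e_i$ and $A_\xi(e_i)$ are tangent, together with the self-adjointness of $A_\xi$, collapses this to $e_i(f)=\lagl x^\top,e_i\ragl+\lagl A_\xi(x^\top),e_i\ragl$, which is the claimed identity. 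With this in hand the scalar case is immediate: $\dv(e^{-f}\nabla\phi_2)=e^{-f}(\triangle\phi_2-\lagl\nabla f,\nabla\phi_2\ragl)=e^{-f}\td{\mathcal L}\phi_2$, so expanding $\dv(\phi_1 e^{-f}\nabla\phi_2)=\phi_1 e^{-f}\td{\mathcal L}\phi_2+e^{-f}\lagl\nabla\phi_1,\nabla\phi_2\ragl$ and integrating yields \eqref{eq6.7'}, the left side vanishing by the divergence theorem.

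For the normal-bundle identity I would run the same argument on a tangent vector field built from the normal data. In a local orthonormal tangent frame $\{e_i\}$ set $X=\sum_i\lagl\eta_1,D^\bot_{e_i}\eta_2\ragl e_i$; a Bochner-type computation (evaluated in a geodesic frame) gives $\dv X=\lagl D^\bot\eta_1,D^\bot\eta_2\ragl+\lagl\eta_1,\triangle^\bot\eta_2\ragl$. Since $\lagl\nabla f,X\ragl=\lagl\eta_1,D^\bot_{\nabla f}\eta_2\ragl$, it follows that $\dv(e^{-f}X)=e^{-f}\big(\lagl D^\bot\eta_1,D^\bot\eta_2\ragl+\lagl\eta_1,{\mathcal L}\eta_2\ragl\big)$, and integrating produces \eqref{eq6.7}. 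The one genuinely nontrivial ingredient is the gradient identity $\nabla f=x^\top+A_\xi(x^\top)$, which is exactly where the $\xi$-submanifold hypothesis (parallelism of $\xi$) and the symmetry of $A_\xi$ enter; everything else is routine. The only other point requiring care is the divergence theorem on the possibly noncompact $M^m$, but since one of $\eta_1,\eta_2$ (resp.\ $\phi_1,\phi_2$) has compact support, the relevant vector field $e^{-f}X$ (resp.\ $\phi_1 e^{-f}\nabla\phi_2$) is smooth with compact support and no boundary term appears.
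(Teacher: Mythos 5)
Your proposal is correct and follows essentially the same route as the paper: both reduce the two identities to the divergence theorem applied to the compactly supported fields $e^{-f}\lagl\eta_1,D^\bot_{e_i}\eta_2\ragl e_i$ and $\phi_1 e^{-f}\nabla\phi_2$, the underlying point being that the drift term $x^\top+A_\xi(x^\top)$ in ${\mathcal L}$ and $\td{\mathcal L}$ equals $\nabla f$. You make that gradient computation (using $D^\bot\xi=0$ and the symmetry of $A_\xi$) explicit, whereas the paper leaves it implicit in its stated pointwise divergence identities; otherwise the arguments coincide.
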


\begin{proof} To prove the two formulas, it suffices to use the Divergence Theorem and the following equalities:
\begin{align}
&\lagl\eta_1,{\mathcal L}\eta_2\ragl e^{-f}=\dv\left(\lagl\eta_1,D^\bot_{e_i}\eta_2\ragl e^{-f}e_i\right)-\lagl D^\bot\eta_1,D^\bot\eta_2\ragl e^{-f}, \label{eq6.9}\\
&\phi_1\td{\mathcal L}\phi_2e^{-f}=\dv\left(\phi_1\nabla_{e_i}\phi_2e^{-f}e_i\right) -\lagl\nabla\phi_1,\nabla\phi_2\ragl e^{-f}.\label{eq6.9'}
\end{align}
\end{proof}

\begin{lem}\label{lem6.4}  For any $\phi\in C^\infty_0(M^m)$ and $\eta\in\Gamma(T^\bot M^m)$, it holds that
\be\label{phieta}
\int_M \lagl \phi\eta,L(\phi\eta)\ragl e^{-f}dV=\int_M\phi^2 \lagl \eta,L(\eta)\ragl e^{-f}dV-\int_M|\nabla \phi|^2 |\eta|^2e^{-f}dV.
\ee
\end{lem}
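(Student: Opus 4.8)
The plan is to reduce the whole identity to two ingredients already established: the expansion of $L(\phi\eta)$ given by Lemma \ref{lem6.2}, and the weighted self-adjointness relation \eqref{eq6.7'} for the drift operator $\td{\mathcal L}$. First I would apply Lemma \ref{lem6.2} to write $L(\phi\eta)=(\td{\mathcal L}\phi)\eta+\phi L(\eta)+2D^\bot_{\nabla\phi}\eta$ and then take the pointwise inner product with $\phi\eta$. This produces exactly three scalar terms,
\[
\lagl\phi\eta,L(\phi\eta)\ragl=\phi(\td{\mathcal L}\phi)|\eta|^2+\phi^2\lagl\eta,L(\eta)\ragl+2\phi\lagl\eta,D^\bot_{\nabla\phi}\eta\ragl.
\]
The middle term integrates against $e^{-f}dV$ to the first term $\int_M\phi^2\lagl\eta,L(\eta)\ragl e^{-f}dV$ on the right-hand side of \eqref{phieta}, so the entire lemma comes down to showing that the remaining two terms integrate to $-\int_M|\nabla\phi|^2|\eta|^2e^{-f}dV$.

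For the cross term I would invoke metric compatibility of the normal connection $D^\bot$, which gives $\lagl\eta,D^\bot_X\eta\ragl=\fr12 X(|\eta|^2)$ for any tangent field $X$; hence $2\phi\lagl\eta,D^\bot_{\nabla\phi}\eta\ragl=\phi\lagl\nabla\phi,\nabla|\eta|^2\ragl$. The key step is then to handle the $\td{\mathcal L}$ term by absorbing the weight $|\eta|^2$ into one of the arguments and applying \eqref{eq6.7'}. Since $\phi$ has compact support, $\phi_1:=\phi|\eta|^2$ lies in $C^\infty_0(M^m)$ and is an admissible test function, so with $\phi_2:=\phi$ formula \eqref{eq6.7'} yields
\[
\int_M\phi|\eta|^2(\td{\mathcal L}\phi)e^{-f}dV=-\int_M\lagl|\eta|^2\nabla\phi+\phi\nabla|\eta|^2,\nabla\phi\ragl e^{-f}dV.
\]
Expanding the right-hand side splits it into $-\int_M|\nabla\phi|^2|\eta|^2e^{-f}dV$ together with $-\int_M\phi\lagl\nabla|\eta|^2,\nabla\phi\ragl e^{-f}dV$. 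Adding back the integrated cross term $\int_M\phi\lagl\nabla\phi,\nabla|\eta|^2\ragl e^{-f}dV$ cancels the second piece and leaves precisely $-\int_M|\nabla\phi|^2|\eta|^2e^{-f}dV$, which proves \eqref{phieta}.

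This is essentially a single weighted integration by parts, so I do not anticipate a genuine obstacle; the only points that require care are the compact-support hypothesis on $\phi$ (which is what makes $\phi|\eta|^2$ a legitimate test function in \eqref{eq6.7'} irrespective of the growth of $\eta$) and the bookkeeping that matches the cross term against the term arising from differentiating the weight $|\eta|^2$. An equivalent route, should one wish to bypass the product $\phi|\eta|^2$, is to note that for a $\xi$-submanifold $\nabla f=x^\top+A_\xi(x^\top)$, so that $\td{\mathcal L}=\triangle_{M^m}-\nabla_{\nabla f}$ is the drift Laplacian and $\td{\mathcal L}\phi=\dv(\nabla\phi)-\lagl\nabla f,\nabla\phi\ragl$; one then applies the ordinary divergence theorem to the field $\phi|\eta|^2\nabla\phi\,e^{-f}$ and reads off the same cancellation.
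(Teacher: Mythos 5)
Your proof is correct and follows essentially the same route as the paper's: expand $L(\phi\eta)$ via Lemma \ref{lem6.2}, integrate the $\td{\mathcal L}$ term by parts using \eqref{eq6.7'} with the test functions $\phi|\eta|^2$ and $\phi$, and cancel the resulting $\phi\lagl\nabla\phi,\nabla|\eta|^2\ragl$ piece against the cross term $2\phi\lagl\eta,D^\bot_{\nabla\phi}\eta\ragl$ via metric compatibility of $D^\bot$. The bookkeeping and the compact-support justification both match the paper, so there is nothing to add.
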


\begin{proof} By \eqref{eq6.5} and \eqref{eq6.7'}, we find
\begin{align*}
\int_M \lagl \phi\eta,&L(\phi\eta)\ragl e^{-f}dV=\int_M  \lagl \phi\eta,(\td{\mathcal L}\phi)\eta+\phi L\eta+2D^\bot_{\nabla\phi}\eta\ragl e^{-f}dV\\
=&\int_M (\phi|\eta|^2)\td{\mathcal L}\phi e^{-f}dV+\int_M\phi^2\lagl\eta,L\eta\ragl e^{-f}dV+\int_M\lagl\eta,D^\bot_{\nabla\phi^2}\eta\ragl e^{-f}dV\\
=&-\int_M((|\nabla\phi|^2|\eta|^2)+\fr12\lagl\nabla\phi^2,\nabla|\eta|^2\ragl) e^{-f}dV+\int_M\phi^2\lagl\eta,L\eta\ragl e^{-f}dV\\
&+\fr12\int_M\nabla_{\nabla\phi^2}|\eta|^2 e^{-f}dV\\
=&\int_M\phi^2\lagl\eta,L\eta\ragl e^{-f}dV-\int_M  |\nabla\phi|^2|\eta|^2 e^{-f}dV.
\end{align*}
\end{proof}

\begin{prop}\label{prop mp} As $\xi$-submanifolds, all $m$-planes in $\bbr^{m+p}$ are not stable.
\end{prop}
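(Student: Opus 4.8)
The plan is to specialize the simplified second variation formula of Corollary~\ref{smplf} to an $m$-plane and then exhibit a single compactly supported $SN$-variation along which $V_\xi''(0)<0$. First I would record the simplifications peculiar to an $m$-plane $x:P^m\to\bbr^{m+p}$ (Example~\ref{expl1}). Being totally geodesic, $x$ has $h\equiv 0$, hence $H\equiv 0$ and every shape operator vanishes; in particular $A_\xi\equiv 0$, so $A_\xi(x^\top)=0$. Therefore the operator $L$ of \eqref{5.1} collapses to $L=\triangle^\bot-D^\bot_{x^\top}+1$, and Corollary~\ref{smplf} reads $V_\xi''(0)=-\int_M\lagl L\eta,\eta\ragl e^{-f}dV$ for every compact $SN$-variation with normal field $\eta$. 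Since the normal bundle of a plane is trivial, I fix a constant (hence parallel) unit normal field $N$; then $\triangle^\bot N=0$ and $D^\bot_{x^\top}N=0$, so \eqref{N} gives simply $L(N)=N$.

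Next I would take $\eta=\phi N$ with $\phi\in C^\infty_0(P^m)$ and invoke Lemma~\ref{lem6.4}. Using $\lagl N,L(N)\ragl=|N|^2=1$, formula \eqref{phieta} yields
\[
V_\xi''(0)=-\int_M\lagl\phi N,L(\phi N)\ragl e^{-f}dV=-\int_M\phi^2 e^{-f}dV+\int_M|\nabla\phi|^2 e^{-f}dV.
\]
Thus instability follows as soon as one produces a compactly supported $\phi$ with $\int_M\phi^2 e^{-f}dV>\int_M|\nabla\phi|^2 e^{-f}dV$, since an $SN$-variation realizing the field $\phi N$ exists by the construction following Definition~\ref{uni-nml-var}.

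The decisive point is the identification of the weight. Parametrizing $P^m$ by $y\mapsto\xi+\sum y^iv_i$ with $\{v_i\}$ an orthonormal basis of the plane, one has $x^\top=x-\xi=\sum y^iv_i$ and hence $f=\fr12|x-\xi|^2=\fr12|y|^2$, so that $e^{-f}dV$ is exactly the Gaussian weight $e^{-|y|^2/2}dy$ on $\bbr^m$; in particular $V_\xi(x)=\int_{\bbr^m}e^{-|y|^2/2}dy=(2\pi)^{m/2}<+\infty$, which checks the finiteness required by Definition~\ref{stblty}. A constant $\phi$ would make the difference strictly positive but is not compactly supported, so I would run the standard localization: take a cutoff $\phi_R$ equal to $1$ on $B_R$, supported in $B_{2R}$, with $|\nabla\phi_R|\le C/R$. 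Then $\int_M\phi_R^2 e^{-f}dV\to(2\pi)^{m/2}$, while $\int_M|\nabla\phi_R|^2 e^{-f}dV\le (C^2/R^2)\int_{B_{2R}\setminus B_R}e^{-|y|^2/2}dy\to 0$; for $R$ large the difference is positive, giving $V_\xi''(0)<0$ and hence the asserted instability.

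I do not expect a genuine obstacle here: the mechanism is simply that the drift Laplacian $\td{\mathcal L}$ on the Gaussian plane has bottom eigenvalue $0$ attained by constants, so the Dirichlet energy can be made negligible against the weighted $L^2$-mass. The only care needed is the routine verification of the two limits above, which is controlled by the finiteness of the weighted volume (Gaussian decay) together with the $1/R^2$ factor coming from the cutoff.
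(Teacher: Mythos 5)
Your proof is correct and takes essentially the same route as the paper's: both reduce to $L(N)=N$ for a constant unit normal on the plane, apply Lemma~\ref{lem6.4} to $\eta_R=\phi_R N$, and let the cutoff exhaust $P^m$ so that the gradient term vanishes against the finite Gaussian weighted volume while the mass term converges to $-\int_{P^m}e^{-f}dV<0$. The only cosmetic difference is your annulus $B_{2R}\setminus B_R$ with gradient bound $C/R$ versus the paper's $B_{R+2}\setminus B_R$ with gradient bound $1$.
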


\begin{proof}  For an $m$-plane $x:P^m\subset \bbr^{m+p}$, let $o$ be the orthogonal projection on $P^m$ of the origin $O$. Then $\xi=\strl{\ra}{Oo}$.
Denote by $B_R(o)\subset P$ the closed ball of radius $R>0$ centered at the fixed point $o$:
$$
B_R(o)=\{x\in P;\ |x^\top|\equiv |x-\xi|\leq R\}.
$$
Let $N$ be a unit constant vector in $\bbr^m$ orthogonal to $P^m$ and $\phi_R$ be a cut-off function on $P^m$ satisfying
$$
(\phi_R)|_{B_R(o)}\equiv 1,\quad (\phi_R)|_{P^m\bsl B_{R+2}(o)}\equiv 0,\quad |\nabla\phi|\leq 1,\quad R>0.
$$
Define $\eta_R=\phi_RN$. Then $\eta_R$ is compactly supported and can be chosen a variation vector field for some $SN$-variation. By \eqref{phieta} and \eqref{N},
\begin{align*}
Q(\eta_R,\eta_R)=&-\int_M \lagl \phi_R N,L(\phi_R N)\ragl e^{-f}dV \\ =&-\int_{P^m}\phi_R^2 \lagl N,L(N)\ragl e^{-f}dV+\int_{P^m}|\nabla \phi_R|^2 e^{-f}dV\\
=&-\int_{P^m}\phi_R^2 \lagl N,N+\lagl h_{ij},N\ragl h_{ij}\ragl e^{-f}dV+\int_{P^m}|\nabla \phi_R|^2 e^{-f}dV\\
\leq& -\int_{P^m}\phi_R^2 e^{-f}dV+\int_{B_{R+2}(o)\bsl B_R(o)} e^{-f}dV
\ \to -\int_{P^m} e^{-f}dV<0
\end{align*}
when $R\,\to +\infty$ since $\int_{P^m} e^{-f}dV<+\infty$. Thus for large $R$ we have $Q(\eta_R,\eta_R)<0$.
\end{proof}
\begin{prop}\label{prop1} As $\xi$-submanifolds, the standard $m$-spheres $S^m(r)$ are all non-stable.
\end{prop}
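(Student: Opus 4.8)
The plan is to imitate the proof of Proposition \ref{prop mp}, but to exploit the compactness of $S^m(r)$ so that no cut-off function is needed and a single global test field does the job. First I would record the geometry of the sphere centered at the origin. Because the position vector $x$ is everywhere normal to $S^m(r)$, we have $x^\top=0$; hence $A_\xi(x^\top)=0$ and the first-order drift operator $D^\bot_{x^\top+A_\xi(x^\top)}$ appearing in $\mathcal{L}$ (see \eqref{5.1}) disappears entirely, leaving $L=\triangle^\bot_{M^m}+\lagl h_{ij},\cdot\ragl h_{ij}+1$ on $S^m(r)$. Moreover the unit normal $N:=x/r$ is parallel in $T^\bot M^m$, since $D^\bot_{e_i}N=\fr1r(e_i)^\bot=0$; this is the natural candidate for a destabilizing direction.

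Next I would compute the action of $L$ on $N$ by means of \eqref{N}. The sphere is umbilic, with $h_{ij}=-\fr1r\delta_{ij}N$ in an orthonormal tangent frame (consistent with $H=\tr h=-\fr{m}{r^2}x=-\fr{m}{r}N$), so that $\lagl h_{ij},N\ragl=-\fr1r\delta_{ij}$ and therefore $\lagl h_{ij},N\ragl h_{ij}=\fr{m}{r^2}N$, a quantity manifestly independent of the chosen orientation of $N$. Since $N$ is parallel we have $\triangle^\bot N=0$, and so \eqref{N} yields
$$
L(N)=N+\fr{m}{r^2}N=\Big(1+\fr{m}{r^2}\Big)N.
$$

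Finally I would take the test field $\eta=N$, which is (trivially) compactly supported by compactness of $S^m(r)$ and which is realized by the $SN$-variation $F(p,t)=x(p)+\psi(t)N(p)$ with $\psi(0)=\psi''(0)=0$ and $\psi'(0)=1$ — that is, the radial rescaling of the sphere. Inserting the formula above into the quadratic form and using $|N|^2\equiv1$ gives
$$
Q(N,N)=-\int_{S^m(r)}\lagl N,L(N)\ragl e^{-f}\,dV=-\Big(1+\fr{m}{r^2}\Big)\int_{S^m(r)}e^{-f}\,dV<0,
$$
the integral being finite and positive by compactness. Thus $V_\xi''(0)<0$ for this admissible $SN$-variation, so $S^m(r)$ fails to be stable for every radius $r>0$.

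There is essentially no analytic obstacle here: the argument is a short direct computation, and the only points needing care are bookkeeping ones — confirming that $x^\top=0$ so the drift term drops out, verifying the umbilic form $h_{ij}=-\fr1r\delta_{ij}N$, and checking that $\eta=N$ genuinely arises from an $SN$-variation. The qualitative content is transparent: the destabilizing deformation is simply the radial (scaling) direction, exactly as one would anticipate for a sphere viewed as a $\xi$-submanifold.
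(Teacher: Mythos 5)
Your proof is correct and is essentially the paper's own argument: the paper also tests with the radial parallel normal field (it uses $\eta=x=rN$ rather than the unit normal $N$, which only rescales $Q$ by $r^2$ and gives $Q(x,x)=-(m+r^2)\int_{S^m(r)}e^{-f}dV<0$), after noting that $x^\top=0$ kills the drift term so $\mathcal{L}\eta=0$. The computation, the choice of destabilizing direction, and the realization by an $SN$-variation all match.
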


\begin{proof} For the standard sphere $S^m(r)\subset\bbr^{m+1}\subset\bbr^{m+p}$, we have $h=-\fr1{r^2}g\,x$, $x^\bot=x$ and $\xi=\left(-\fr m{r^2}+1\right)x$. Choose the variation vector field $\eta=x$ so that ${\mathcal L}\eta=0$. It follows that
\begin{align*}
Q(\eta,\eta)\leq &-\int_{S^m(r)}\lagl\eta,L(\eta)\ragl e^{-f}dV_{S^m(r)}=-\int_{S^m(r)}(\sum\lagl h_{ij},\eta\ragl^2+|x|^2) e^{-f}dV_{S^m(r)}\\
=&-(m+r^2)\int_{S^m(r)}e^{-f}dV_{S^m(r)}<0.
\end{align*}
\end{proof}

From Proposition \ref{prop mp} and Proposition \ref{prop1}, we easily find

\begin{cor}\label{cor2} The product $\xi$-submanifolds $S^{m_1}(r_1)\times\cdots\times S^{m_k}(r_k)\times P^{n_1}\times\cdots\times P^{n_l}$ are not stable.\end{cor}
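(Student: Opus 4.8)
The plan is to reduce the product case to the single-sphere computation of Proposition \ref{prop1}, exploiting the fact that the product is extrinsic so that all the geometric data split over the factors. First, if no sphere factor occurs (i.e. $k=0$), the submanifold is itself an affine $m$-plane and Proposition \ref{prop mp} applies verbatim; so I may assume at least one sphere is present and write $M^m=S^{m_1}(r_1)\times M'$, where $M'=S^{m_2}(r_2)\times\cdots\times P^{n_l}$ is again a $\xi$-submanifold by Example \ref{expl4} with $\int_{M'}e^{-f'}dV'<+\infty$. Since the two factors sit in orthogonal Euclidean subspaces, the induced metric, the volume element, the second fundamental form $h$ (block-diagonal, equal to $-\fr1{r_1^2}g_{ab}x_1$ on the sphere block), the normal connection $D^\bot$, and the drift field $x^\top+A_\xi(x^\top)$ all decompose as direct sums over the factors; in particular $f=f_1+f'$ with $f_1$ constant on the sphere, so that $e^{-f}=e^{-f_1}e^{-f'}$ and $dV=dV_1\,dV'$.

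Next I would take as variation field $\eta=\phi\,x_1$, where $x_1$ is the position vector of the $S^{m_1}(r_1)$-factor (which by Proposition \ref{prop1} destabilizes the sphere alone) and $\phi=\phi(z)$ is a function on $M'$ only; this $\eta$ is a compactly supported normal field and is the variation vector of some $SN$-variation. The key computation is that $x_1$ remains parallel in the normal bundle of the product and is annihilated by $D^\bot$ in the sphere directions, while $D^\bot_{e_\mu}\eta=(e_\mu\phi)x_1$ in the $M'$-directions. Feeding this into $L=\mathcal L+\lagl h_{ij},\cdot\ragl h_{ij}+1$ and using $x_1^\top=0$, $A_{\xi_1}(x_1^\top)=0$ together with $\lagl h_{ab},\eta\ragl h_{ab}=\fr{m_1}{r_1^2}\phi\,x_1$, one gets the clean identity
\[
L(\eta)=\Big(\td{\mathcal L}'\phi+\big(\fr{m_1}{r_1^2}+1\big)\phi\Big)x_1,
\]
where $\td{\mathcal L}'=\triangle_{M'}-\nabla_{(x')^\top+A_{\xi'}((x')^\top)}$ is the drift-Laplacian of the factor $M'$.

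Then I would integrate out the sphere factor (whose weighted volume contributes a positive constant $C=r_1^2\int_{S^{m_1}(r_1)}e^{-f_1}dV_1$) and integrate by parts on $M'$ via the factor version of \eqref{eq6.7'}, obtaining
\[
Q(\eta,\eta)=C\int_{M'}\Big(|\nabla'\phi|^2-\big(\fr{m_1}{r_1^2}+1\big)\phi^2\Big)e^{-f'}dV'.
\]
To conclude negativity I take $\phi\equiv1$ when $M'$ is compact (all remaining factors spheres), or a cut-off $\phi_R$ equal to $1$ on a large ball, $0$ outside, with $|\nabla'\phi_R|\le1$, when plane factors are present; letting $R\to\infty$ and using $\int_{M'}e^{-f'}dV'<+\infty$ kills the gradient term and leaves a strictly negative limit, exactly as in the proofs of Propositions \ref{prop mp} and \ref{prop1}.

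The main obstacle I expect is bookkeeping the splitting of $L$ over the extrinsic product---specifically checking that $x_1$ stays $D^\bot$-parallel and that the cross terms in $h$ and in the drift field vanish---so that $L$ acting on $\phi\,x_1$ collapses to the scalar drift-operator $\td{\mathcal L}'$ on $M'$ plus the positive potential $\fr{m_1}{r_1^2}+1$. Once this reduction is in place the negativity is immediate from the sphere's positive potential and the finiteness of the weighted volume of $M'$.
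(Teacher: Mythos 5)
Your argument is correct and is precisely the fleshed-out version of what the paper leaves implicit: the paper's ``proof'' of this corollary is the single line ``From Proposition \ref{prop mp} and Proposition \ref{prop1}, we easily find,'' i.e.\ exactly the reduction you carry out, transplanting the sphere's destabilizing radial field $x_1$ (or, when $k=0$, the plane argument) to the product via a cut-off on the non-compact factor. Your splitting of $h$, $f$, $D^\bot$ and the drift term, the identity $L(\phi x_1)=(\td{\mathcal L}'\phi+(\fr{m_1}{r_1^2}+1)\phi)x_1$, and the limit $R\to\infty$ all check out against Lemmas \ref{lem6.2}--\ref{lem6.4}.
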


A more general conclusion than Proposition \ref{prop1} is the following

\begin{prop}\label{prop5.4} Let $x:M^m\to\bbr^{m+p}$ be a compact $\xi$-submanifold. If $x$ has a non-trivial parallel normal vector field, then $x$ is not stable. In particular, all compact $\lambda$-hypersurfaces and compact $\xi$-submanifold with $\xi\neq 0$ are not stable.
\end{prop}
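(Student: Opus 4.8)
The plan is to test the second variation directly against the given parallel normal field, with no cut-off needed. First I would record that compactness of $M^m$ makes $V_\xi(x)=\int_M e^{-f}\,dV$ finite, so that stability is well defined in the sense of Definition \ref{stblty}, and that every normal field on $M^m$ is automatically compactly supported. In particular, the non-trivial parallel normal field $N$ is the variation vector field of some admissible $SN$-variation, via the construction given just after Definition \ref{uni-nml-var}.

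The crucial observation is that parallelism collapses most of the operator $L$. Since $D^\bot N=0$, the first-order part $\mathcal L=\triangle^\bot-D^\bot_{x^\top+A_\xi(x^\top)}$ annihilates $N$, so by \eqref{N} one has $L(N)=N+\lagl h_{ij},N\ragl h_{ij}$. Feeding $\eta=N$ into $Q(\eta,\eta)=-\int_M\lagl L(\eta),\eta\ragl e^{-f}\,dV$ from Corollary \ref{smplf} gives
\[
Q(N,N)=-\int_M\Big(|N|^2+\sum_{i,j}\lagl h_{ij},N\ragl^2\Big)e^{-f}\,dV.
\]
Because $N$ is parallel, $d|N|^2=2\lagl D^\bot N,N\ragl=0$, so $|N|^2$ is a positive constant; hence the integrand is strictly positive and $Q(N,N)<0$, which is exactly the failure of stability.

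For the two special cases I would only need to exhibit the relevant parallel normal field. A compact $\lambda$-hypersurface carries its globally defined unit normal $N$, which is parallel in the one-dimensional normal bundle and non-trivial, so the general argument applies verbatim. For a compact $\xi$-submanifold with $\xi\neq 0$, the defining field $\xi=H+x^\bot$ is itself a non-trivial parallel normal field, and again $Q(\xi,\xi)<0$.

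I do not expect a genuine obstacle here: the argument is short once one notices that a parallel test field kills $\mathcal L$ and leaves a manifestly positive integrand, so that strict negativity of $Q$ is forced. The only points deserving care are checking that $|N|^2$ is constant (immediate from $D^\bot N=0$) and that a parallel normal field does furnish a legitimate $SN$-variation on the compact $M^m$; both are routine given the earlier setup.
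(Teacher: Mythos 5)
Your proposal is correct and follows essentially the same route as the paper: test the quadratic form against the parallel normal field itself, observe that $\mathcal L$ annihilates it, and conclude $Q(N,N)=-\int_M\big(|N|^2+\sum\lagl h_{ij},N\ragl^2\big)e^{-f}dV<0$. The remarks on constancy of $|N|^2$ and on the special cases ($\lambda$-hypersurfaces and $\xi\neq 0$) are accurate and consistent with the paper's argument.
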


\begin{proof}
Let $\eta\neq 0$ be a parallel normal vector field. Then $\eta$ can be chosen to be a variation vector field of some $SN$-variation $F$ of $x$. Since $\triangle^\bot\eta=D^\bot_{x^\top+A_\xi(x^\top)}\eta=0$, it then follows from \eqref{s2vf} that
$$
Q(\eta,\eta)=-\int_M(\sum\lagl h_{ij},\eta\ragl^2+|\eta|^2)e^{-f}dV<0.
$$
\end{proof}

\begin{cor}\label{cor3}
Any compact and simply connected $\xi$-submanifold with flat normal bundle is not stable.
\end{cor}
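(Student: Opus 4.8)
The plan is to deduce Corollary \ref{cor3} from Proposition \ref{prop5.4} by producing a globally defined, non-trivial parallel normal vector field; once such a field exists, Proposition \ref{prop5.4} immediately gives non-stability. Thus the entire content of the corollary lies in upgrading the hypotheses ``flat normal bundle'' and ``simply connected'' into the existence of a parallel section of $T^\bot M^m$, and this is where I would concentrate the argument. Note that $p\geq 1$ here (otherwise the normal vector $\xi$ and hence the $\xi$-submanifold structure would be meaningless), so the normal bundle has positive rank and nonzero normal vectors are available at each point.

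First I would recall that ``flat normal bundle'' means precisely that the curvature $R^\bot$ of the normal connection $D^\bot$ vanishes identically. Since $D^\bot$ is a metric connection on $T^\bot M^m$, its parallel transport along any path is a linear isometry between the corresponding normal fibres. Because $M^m$ is simply connected and $R^\bot\equiv 0$, the holonomy of $D^\bot$ is trivial: parallel transport around any loop is the identity, as every loop is null-homotopic and parallel transport around a contractible loop is controlled by the curvature, which is zero. (This is exactly the standard fact that a flat metric vector bundle over a simply connected base is trivial as a flat bundle; I would either cite it or supply the one-line holonomy argument just indicated.)

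Next I would build the parallel field by parallel transport. Fix a base point $p_0\in M^m$ and choose any nonzero vector $\eta_0\in T^\bot_{p_0}M^m$. For an arbitrary $p\in M^m$, pick a path from $p_0$ to $p$ and let $\eta(p)$ be the $D^\bot$-parallel transport of $\eta_0$ along that path. By the triviality of the holonomy established above, $\eta(p)$ does not depend on the chosen path, so $\eta$ is a well-defined global section $\eta\in\Gamma(T^\bot M^m)$, and it is $D^\bot$-parallel by construction. Since parallel transport preserves the fibre metric, $|\eta|\equiv|\eta_0|>0$; in particular $\eta$ is nowhere vanishing, hence a non-trivial parallel normal vector field.

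Finally I would invoke Proposition \ref{prop5.4}: a compact $\xi$-submanifold admitting a non-trivial parallel normal vector field is not stable. Applying this to the field $\eta$ just constructed completes the proof. The only genuinely delicate point is the second paragraph, namely the passage from $R^\bot\equiv 0$ together with simple connectivity to the existence of a global parallel section; everything else is either bookkeeping or a direct citation of Proposition \ref{prop5.4}.
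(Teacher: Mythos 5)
Your argument is correct and is precisely the (implicit) proof intended by the paper: flatness of $D^\bot$ plus simple connectivity gives trivial normal holonomy, hence a nowhere-vanishing parallel normal field, and Proposition \ref{prop5.4} then yields non-stability. The paper states the corollary without proof as an immediate consequence of Proposition \ref{prop5.4}, so your write-up simply supplies the same standard holonomy step in detail.
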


\section{The $W$-stability of $\xi$-submanifolds}

By the discussion of last section, it turns out that the stability given in Definition \ref{stblty} is over-strong in a sense. So it is natural and interesting to find some weaker stability for $\xi$-submanifolds. Motivated by the ``volume-preserving'' variations in the case of hypersurfaces (see \cite{mc-r}), we introduce the $W$-stability in the following way.

\begin{dfn} \rm Let $x:M^m\to \bbr^{m+p}$ be an immersion. A $SN$-variation $F:M^m\times (-\veps,\veps)\to \bbr^{m+p}$ of $x$ is called $VP$ if the corresponding variation vector $\eta\equiv\pp{F}{t}|_{t=0}$ satisfies
\be\label{vp}\int_M\lagl\eta,N\ragl e^{-f}=0,\quad\forall\, N\in\Gamma(T^\bot M)\text{ and } D^\bot N\equiv 0.\ee
\end{dfn}

\begin{rmk}\rm It is clear that, in the special case of codimension $1$, a $VP$-variation is nothing but the ``volume-preserving'' one which has been considered in \cite{mc-r}.\end{rmk}

\begin{dfn}\label{wstblty}\rm A $\xi$-submanifold $x:M^m\to \bbr^{m+p}$ is called $W$-stable if $V_\xi(x)<+\infty$ and for every $VP$-variation it holds that $V_\xi''(0)\geq 0$.
\end{dfn}

Then we have

\begin{thm}\label{main4} The $m$-planes are all $W$-stable.\end{thm}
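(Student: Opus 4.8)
The plan is to show that for an $m$-plane $P^m$, the quadratic form $Q(\eta,\eta)=V_\xi''(0)$ is nonnegative on all $VP$-variation vector fields, in contrast to Proposition \ref{prop mp} where stability failed only because one was allowed to use the forbidden direction $\eta=N$. First I would set up coordinates: take $P^m$ to be a translate of a coordinate $m$-plane, write $x=\xi+x^\top$ where $\xi$ is the constant projection of the origin, and note that the second fundamental form $h$ vanishes identically since $P^m$ is totally geodesic. Then the simplified formula \eqref{s2vf} collapses dramatically: the curvature term $g^{ik}g^{jl}\lagl h_{ij},\eta\ragl h_{kl}$ drops out, and since $\xi$ is a constant normal vector the Weingarten map $A_\xi$ vanishes, so $A_\xi(x^\top)=0$. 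Hence $L=\triangle_M^\bot-D^\bot_{x^\top}+1$ and $Q(\eta,\eta)=-\int_{P^m}\lagl(\triangle_M^\bot-D^\bot_{x^\top}+1)\eta,\eta\ragl e^{-f}dV$.

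Next I would exploit the flat normal bundle of the $m$-plane: choose a global parallel orthonormal normal frame $\{e_\alpha\}$, so that the normal connection $D^\bot$ acts componentwise and the whole problem decouples into $p$ scalar problems. Writing $\eta=\sum_\alpha \phi_\alpha e_\alpha$, the operator $\triangle_M^\bot-D^\bot_{x^\top}$ becomes the scalar drift Laplacian $\td{\mathcal L}=\triangle-\nabla_{x^\top}$ acting on each $\phi_\alpha$, which is exactly the Ornstein--Uhlenbeck operator associated to the Gaussian weight $e^{-f}=e^{-\frac12|x^\top|^2}$ on $\bbr^m\cong P^m$. Using \eqref{eq6.7'} (integration by parts against the weight), I get
\[
Q(\eta,\eta)=\sum_\alpha\int_{P^m}\left(|\nabla\phi_\alpha|^2-\phi_\alpha^2\right)e^{-f}dV,
\]
so the question reduces to the sharp weighted Poincar\'e (spectral-gap) inequality for the Gaussian measure: $\int|\nabla\phi|^2e^{-f}\ge\int\phi^2 e^{-f}$ for functions $\phi$ that are $L^2(e^{-f})$-orthogonal to the constants. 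This is the classical Gaussian Poincar\'e inequality, whose first nonzero eigenvalue of $-\td{\mathcal L}$ equals $1$, with eigenfunctions the linear coordinate functions $x^\top_i$ (the degree-one Hermite polynomials alluded to in Remark 4.1).

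The key point that makes everything work is the $VP$ constraint \eqref{vp}: since the constant normal fields $N=e_\alpha$ are exactly the parallel normal fields with $D^\bot N\equiv0$, the condition $\int_M\lagl\eta,e_\alpha\ragl e^{-f}=0$ forces $\int_{P^m}\phi_\alpha\,e^{-f}dV=0$ for every $\alpha$; that is, each component $\phi_\alpha$ has vanishing Gaussian mean and hence is orthogonal to the ground state (the constants, eigenvalue $0$) of $-\td{\mathcal L}$. By the spectral gap, each such $\phi_\alpha$ satisfies $\int|\nabla\phi_\alpha|^2e^{-f}\ge\int\phi_\alpha^2e^{-f}$, whence every summand is nonnegative and $Q(\eta,\eta)\ge0$. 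The main obstacle I anticipate is not the algebra but justifying the spectral statement rigorously on the noncompact $P^m$: one must verify that the compactly supported $SN$-variation fields lie in the form domain, that the integration by parts \eqref{eq6.7'} is valid with no boundary contribution (guaranteed by compact support), and that the Gaussian Poincar\'e inequality applies to such $\phi_\alpha$. All of these are standard once the decoupling is in place, so I would organize the proof as: (i) reduce $L$ to $\td{\mathcal L}+1$ componentwise, (ii) rewrite $Q$ via integration by parts, (iii) translate the $VP$ condition into vanishing Gaussian means, and (iv) invoke the spectral gap of the Ornstein--Uhlenbeck operator to conclude $Q\ge0$.
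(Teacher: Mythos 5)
Your proposal is correct and follows essentially the same route as the paper: decouple $\eta$ into scalar components along a constant normal frame, reduce $L$ to the drift (Ornstein--Uhlenbeck) operator $\td{\mathcal L}+1$ on $P^m\cong\bbr^m$ with Gaussian weight, use the $VP$ condition to force each component to have zero Gaussian mean, and conclude from the spectral gap. The only difference is presentational: you integrate by parts and cite the classical Gaussian Poincar\'e inequality, whereas the paper derives the same spectral-gap statement from scratch via the multi-variable Hermite polynomial eigenfunction expansion (including a completeness argument by Fourier transform).
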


\begin{proof} For an $m$-plane $x:P^m\subset \bbr^{m+p}$, let $\eta$ be an arbitrary normal vector field on $P^m$ with compact support. Then we have $A_\eta\equiv 0$, $x-\xi=x^\top$ and
$$L=\triangle_{P^m}^\bot -D^\bot_{x^\top}+1.$$
Clearly, there are constant normal basis $e_\alpha$, $\alpha=m+1,\cdots,m+p$. So $\eta$ can be expressed by
$\eta=\sum\eta^\alpha e_\alpha$ with $\eta^\alpha\in C^\infty_0(P^m)$. Consequently,
$$
L(\eta)=\sum \td L(\eta^\alpha)e_\alpha,\quad \lagl L\eta,\eta\ragl=\sum \eta^\alpha \td L\eta^\alpha,
$$
where $\td L=\triangle_{P^m}-\nabla_{x^\top}+1$.  Now we make the following

{\bf Claim:}  {\em the eigenvalues of the operator $-\td L$ are $n-1$ with $n=0,1,\cdots$.}

To prove this claim, we need to make use of the {\em multi-variable Hermitian polynomials} ${\mathcal H}_{n_1\cdots n_m}$ on $\bbr^m$, labelled with $0\leq n_1,\cdots, n_m<+\infty$, which are defined by the expansion (see \cite{dat1} and \cite{dat} for the detail)
\begin{align}\label{herm1}
e^{-\fr{|u-t|^2}{2}}=&e^{-\fr{|u|^2}{2}}\sum_{n_1,\cdots, n_m}\fr{(t^1)^{n_1}\cdots (t^m)^{n_m}}{n_1!\cdots n_m!}{\mathcal H}_{n_1\cdots n_m}(u),
\nnm\\
&\quad\qquad u=(u^1,\cdots,u^m),\ t=(t^1,\cdots t^m)\in\bbr^m,
\end{align}
or equivalently
\begin{align}\label{herm2}
e^{-\fr{|t|^2}{2}+\lagl t,u\ragl}=&\sum_{n_1,\cdots, n_m}\fr{(t^1)^{n_1}\cdots (t^m)^{n_m}}{n_1!\cdots n_m!}{\mathcal H}_{n_1\cdots n_m}(u),\nnm\\
&\quad u=(u^1,\cdots,u^m),\ t=(t^1,\cdots t^m)\in\bbr^m,
\end{align}
It is clear that
\be\label{hermto}
{\mathcal H}_{n_1\cdots n_m}(u)={\mathcal H}_{n_1}(u^1)\,\cdots\,{\mathcal H}_{n_m}(u^m),\quad\forall u=(u^1,\cdots,u^m)\in\bbr^m
\ee
where, for each $i=1,\cdots,m$, ${\mathcal H}_{n_i}(u^i)$ is the Hermitian Polynomial of one variable $u^i$ defined by
\be\label{herone}
e^{-\fr12|t^i|^2+u^it^i}=\sum_{n_i}\fr{(t^i)^{n_i}}{n_i!}{\mathcal H}_{n_i}(u^i),\quad u^i,t^i\in\bbr.
\ee
By \eqref{herone}, we easily find that
\be\label{herone1}
{\mathcal H}_{n_i+1}(u^i)=u^i{\mathcal H}_{n_i}-n_i{\mathcal H}_{n_i-1},\quad
\dd{}{u^i}{\mathcal H}_{n_i}(u^i)=n_i{\mathcal H}_{n_i-1},\ i=1,\cdots,m
\ee
implying that
\be\label{herone2}
\left(-\dd{^2}{(u^i)^2}+u^i\dd{}{u^i}\right){\mathcal H}_{n_i}(u^i)=n_i{\mathcal H}_{n_i}(u^i),\quad i=1,\cdots,m.
\ee
Consequently, by \eqref{hermto}, we have
\be\label{hermlap}
\left(-\triangle_{\bbr^m}+\nabla_{u}\right){\mathcal H}_{n_1\cdots n_m}(u)=\big(\sum_{i=1}^mn_i\big){\mathcal H}_{n_1\cdots n_m}(u),\quad\forall n_1,\cdots,n_m\geq 0.
\ee
It is known that all these multi-variable Hermitian polynomials are weighted square integrable with the weight $e^{-\fr{|u|^2}{2}}$, that is
$$
{\mathcal H}_{n_1\cdots n_m}\in L^2_w(\bbr^m):=\ol{\{\vfi\in C^\infty(\bbr^m);\ \int_{\bbr^m}\vfi^2 e^{-f}dV_{\bbr^m}<+\infty\}}.
$$
Consequently, integers $\sum_{i=1}^m n_i=0,1,\cdots$ are eigenvalues of the operator $-\triangle_{\bbr^m}+\nabla_{u} $ acting on $L^2_w(\bbr^m)$. By making a change of coordinates on $\bbr^{m+p}$ we can assume $x^i-\xi^i=u^i$, $i=1,2,\cdots, m$, for $x\in P^m$. Thus \eqref{hermlap} shows that $-\td L+1$ has $n=0,1,\cdots$ as its eigenvalues, or equivalently, $n-1=-1,0,1,\cdots$ are eigenvalues of $-\td L$ where constants are those eigenfunctions corresponding to $-1$.

To complete the claim, we also have to show that $\{{\mathcal H}_{n_1\cdots n_m};\ n_1,\cdots,n_m\geq 0\}$ is a complete basis for the space $S^{\infty,2}_w(\bbr^m)$ of smooth and weighted square integrable functions on $\bbr^m$. For doing this, we let $E$ be the orthogonal complement in $L^2_w(\bbr^m)$ of the closure of the linear span of all ${\mathcal H}_{n_1\cdots n_m}$, that is,
$$
E:=(\ol{\spn\{{\mathcal H}_{n_1\cdots n_m},\ n_1,\cdots, n_m=0,1,\cdots\}})^\bot.
$$
For any $\vfi\in E$, we have
$$
0=(\vfi,{\mathcal H}_{n_1\cdots n_m})_w:=\int_{\bbr^m}\vfi(u){\mathcal H}_{n_1\cdots n_m}(u)e^{-f}dV_{\bbr^m},\quad n_1,\cdots, n_m=0,1,\cdots.
$$
It then easily follows from \eqref{herm2} that ${\mathcal F}(\vfi e^{-f})=0$ where ${\mathcal F}$ is the usual multi-variable Fourier transformation. Since ${\mathcal F}$ is injective, we obtain that $\vfi e^{-f}=0$ implying $\vfi\equiv 0$. This shows that $E=0$ and thus \be\label{add}L^2_w(\bbr^m)=\ol{\spn\{{\mathcal H}_{n_1\cdots n_m},\ n_1,\cdots, n_m=0,1,\cdots\}}.\ee

Now suppose $\eta$ is a compact normal vector field that can be taken as a $VP$-variation vector field. Then for each $\alpha$, we have
$$\eta^\alpha\in S^{\infty,2}_w(P^m):=\{\vfi\in C^\infty(P^m);\ \int_{P^m}\vfi^2 e^{-f}dV_{P^m}<+\infty\}.$$ Since $\td L$ is self-adjoint with respect to the weighted measure $e^{-f}dV$, we know that it is diagonalizable, that is, any compactly supported smooth function can be decomposed into a sum of some eigenfunctions of $\td L$. In particular, we can write for each $\alpha=m+1,\cdots,m+p$,
\be\label{6.10}
\eta^\alpha=\eta^\alpha_0+\sum_{k\geq 1}\eta^\alpha_k,\quad \eta^\alpha_0\in\bbr,
\ee
where $\eta^\alpha_k\in S^{\infty,2}_{w}(P^m)$ satisfying $\td L(\eta^\alpha_k)=-\lambda_k\eta^\alpha_k$, $k\geq 0$. Furthermore, the self-adjointness of $\td L$ also implies that, for each pair of $k\neq l$, $\eta^\alpha_k$ and $\eta^\alpha_l$ are orthogonal, that is
\be\label{k,l}\int_{P^m}\sum_\alpha\eta^\alpha_k\eta^\alpha_le^{-f}dV=0,\quad k\neq l.\ee
Since $\eta$ is a $VP$-variation vector field, we have by \eqref{6.10} and \eqref{vp} that $\int_{P^m}\eta^\alpha e^{-f}dV=0$ for all $\alpha=m+1,\cdots,m+p$. It then follows from \eqref{k,l} that $\eta^\alpha_0=0$, $\alpha=m+1,\cdots,m+p$. Therefore
$$\int_{P^m}\sum_\alpha|\eta^\alpha|^2e^{-f}dV=\int_{P^m}\sum_\alpha\sum_{k,l\geq 1}\eta^\alpha_k\eta^\alpha_le^{-f}dV=\sum_\alpha\sum_{k\geq 1}\int_{P^m}|\eta^\alpha_k|^2e^{-f}dV.$$
Consequently, we have
\begin{align*}
\int_{P^m}\sum_\alpha\eta^\alpha(-\td L\eta^\alpha)e^{-f}dV=&\int_{P^m}\sum_\alpha\sum_{k\geq 1}\eta^\alpha_k\sum_{l\geq 1}(-\td L\eta^\alpha_l)e^{-f}dV\\
=&\sum_\alpha\sum_{k,l\geq 1}\int_{P^m}\lambda_l\eta^\alpha_k\eta^\alpha_le^{-f}dV
=\sum_\alpha\sum_{k\geq 1}\lambda_k\int_{P^m}|\eta^\alpha_k|^2e^{-f}dV\\
\geq&\lambda_1\sum_\alpha\sum_k\int_{P^m}|\eta^\alpha_k|^2e^{-f}dV =\lambda_1\sum_\alpha\int_{P^m}|\eta^\alpha|^2\geq 0
\end{align*}
implying that
\begin{align*}
Q(\eta,\eta)=&-\int_{P^m}\lagl\eta,L\eta\ragl e^{-f}dV=\int_{P^m}\sum_\alpha\eta^\alpha(-\td L\eta^\alpha)e^{-f}dV\\
=&\sum_\alpha\int_{P^m}\eta^\alpha(-\td L\eta^\alpha)e^{-f}dV\geq 0.
\end{align*}
\end{proof}

\begin{thm}\label{prop1'} As a $\xi$-submanifold, the index ${\rm ind}(S^m(r))$ of the standard $m$-sphere $S^m(r)$ with respect to $VP$-variations is no less than $m+1$. Furthermore, ${\rm ind}(S^m(r))=m+1$ if and only if $r^2\leq m$. In particular, all of these spheres are not $W$-stable.
\end{thm}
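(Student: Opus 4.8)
The plan is to diagonalise the stability operator $L$ of \eqref{5.1} completely and then read off the index by counting its strictly positive eigenvalues subject to the $VP$-constraint. First I would fix the geometry of $S^m(r)\subset\bbr^{m+1}\subset\bbr^{m+p}$. Since the sphere is centred at the origin, the position vector is purely normal, so $x^\top=0$; consequently the drift term $D^\bot_{x^\top+A_\xi(x^\top)}$ vanishes and $\mathcal L=\triangle^\bot$. Moreover $x-\xi=\fr{m}{r^2}x$ has constant length, so $f=\fr12|x-\xi|^2\equiv\fr{m^2}{2r^2}$ is constant and $e^{-f}\,dV$ is a constant multiple of the round volume. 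I would then use the global normal frame $\{N_1:=x/r,\ e_{m+2},\dots,e_{m+p}\}$, where $N_1$ is radial and $e_{m+2},\dots,e_{m+p}$ are the constant directions of $\bbr^{m+p}$ orthogonal to $\bbr^{m+1}$; a one-line computation gives $D_XN_1=\fr1rX$ (tangential, whence $D^\bot N_1=0$) and $D^\bot e_\alpha=0$, so the frame is parallel and the normal bundle is flat.

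With this frame the operator $L$ decouples. Writing $\eta=\phi N_1+\sum_\alpha\psi^\alpha e_\alpha$ and using $h_{ij}=-\fr1r\delta_{ij}N_1$, I would check that $\triangle^\bot\eta=(\triangle_{S^m(r)}\phi)N_1+\sum_\alpha(\triangle_{S^m(r)}\psi^\alpha)e_\alpha$ and $g^{ik}g^{jl}\lagl h_{ij},\eta\ragl h_{kl}=\fr{m}{r^2}\phi\,N_1$, so that
$$L(\eta)=\big[(\triangle_{S^m(r)}+\fr m{r^2}+1)\phi\big]N_1+\sum_\alpha\big[(\triangle_{S^m(r)}+1)\psi^\alpha\big]e_\alpha.$$
Thus $L$ splits into a radial block $L_1=\triangle_{S^m(r)}+\fr m{r^2}+1$ and $p-1$ identical tangential blocks $L_0=\triangle_{S^m(r)}+1$.

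Next I would invoke the round spectrum: on $S^m(r)$ the eigenvalues of $\triangle_{S^m(r)}$ are $-\fr{k(k+m-1)}{r^2}$, $k=0,1,2,\dots$, with eigenspaces the degree-$k$ spherical harmonics, the degree-$1$ space being $(m+1)$-dimensional. Hence $L_1$ has eigenvalues $1+\fr{m-k(k+m-1)}{r^2}$ and $L_0$ has eigenvalues $1-\fr{k(k+m-1)}{r^2}$. Because the frame is parallel, the parallel normal fields are exactly the constant-coefficient ones, i.e. the $k=0$ eigenspaces; since $e^{-f}$ is constant, the $VP$-condition \eqref{vp} is precisely orthogonality to these, i.e. each component $\phi,\psi^\alpha$ has zero mean. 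As $Q(\eta,\eta)=-\int\lagl L\eta,\eta\ragl e^{-f}dV$ and $L$ is self-adjoint with discrete spectrum on the compact sphere, the $VP$-index equals the number (with multiplicity) of strictly positive eigenvalues of $L$ carried by modes with $k\geq1$. The radial $k=1$ modes have $L_1$-eigenvalue $1+\fr{m-m}{r^2}=1>0$ with multiplicity $m+1$ and zero mean (they are the normal parts of the translations of the sphere inside $\bbr^{m+1}$), so ${\rm ind}(S^m(r))\geq m+1$ and, since $m+1>0$, the sphere is not $W$-stable. For the sharp count I would compare the remaining thresholds: the tangential $k=1$ modes of $L_0$ turn positive exactly when $1-\fr m{r^2}>0$, i.e. $r^2>m$, whereas the radial $k=2$ modes of $L_1$ turn positive only at $r^2>m+2$, and all higher modes later still. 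Hence for $r^2\leq m$ every $k\geq1$ mode other than the radial $k=1$ family is non-positive and ${\rm ind}(S^m(r))=m+1$, while for $r^2>m$ the $(p-1)(m+1)$ tangential $k=1$ modes are extra negative directions and ${\rm ind}(S^m(r))>m+1$; this yields ${\rm ind}(S^m(r))=m+1$ iff $r^2\leq m$.

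The conformal-frame reduction of $L$ to $L_0,L_1$ and the spherical-harmonic spectrum are routine. \emph{The main obstacle} is the bookkeeping that converts the $VP$-constraint into the clean rule ``discard the $k=0$ modes'': one must verify that the parallel normal fields are \emph{exactly} the $k=0$ eigenspaces, and that because $e^{-f}$ is constant and $L$ respects the block decomposition, restricting $Q$ to the $VP$-subspace removes precisely those modes without mixing the others, so that the index is genuinely the eigenvalue count above. The threshold comparison $m<m+2$ then renders the radial $k=2$ family irrelevant and isolates $r^2=m$ — a value produced by the higher-codimension tangential directions — as the sharp value.
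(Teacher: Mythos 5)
Your proof is correct and follows essentially the same route as the paper's: the same parallel normal frame $\{x/r,\,e_{m+2},\dots,e_{m+p}\}$, the same block-diagonalization of $L$ into $\triangle_{S^m(r)}+\fr m{r^2}+1$ on the radial component and $\triangle_{S^m(r)}+1$ on the constant normal directions, the same reduction of the $VP$-constraint to discarding the $k=0$ spherical harmonics, and the same threshold comparison isolating $r^2=m$ (tangential $k=1$ modes) against $r^2=m+2$ (radial $k=2$ modes). No changes needed.
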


\begin{proof} For the standard sphere $S^m(r)\subset\bbr^{m+1}\subset\bbr^{m+p}$, we have $x^\top=0$, $h=-\fr1{r^2}gx$ and hence $\xi=\left(-\fr m{r^2}+1\right)x$. It follows that $x-\xi=\fr m{r^2}x$ and
$$
L=\triangle^\bot_{S^m(r)}+\lagl h_{ij},\cdot\ragl h_{ij}+1=\triangle^\bot_{S^m(r)}+\fr{m}{r^4}\lagl x,\cdot\ragl x+1,\quad \td{\mathcal L}=\triangle_{S^m(r)}.
$$
In particular, $L(x)=\fr1{r^2}(m+r^2)x$ and, for all parallel normal vector field $N$ orthogonal to $x$, $L(N)=N$. Let $e_{m+2},\cdots,e_{m+p}$ be an othonormal constant basis of the subspace $(\spn\{TS^m(r),x\})^\bot\subset\bbr^{m+p}$. Then
$e_{m+1}:\equiv\fr1r x,e_{m+2},\cdots,e_{m+p}$ is an othonormal normal frame field of $S^m(r)$ and
\be\label{le} L(e_{m+1})=\fr1{r^2}(m+r^2)e_{m+1},\quad L(e_\alpha)=e_\alpha,\quad \alpha=m+2,\cdots,m+p.\ee

Now for any $\eta\in\Gamma(T^\bot S^m(r))$ we can write
$$\eta=\sum_{\alpha}\eta^\alpha e_\alpha \text{ with } \eta^\alpha\in C^\infty(S^m(r)), \ m+1\leq\alpha\leq m+p.$$
Then by \eqref{eq6.5} and \eqref{le}
\begin{align*}
L(\eta)=&\sum_\alpha(\td{\mathcal L}(\eta^\alpha)) e_\alpha+\eta^\alpha L(e_\alpha) \\
=&((\triangle_{S^m(r)}\eta^{m+1}) e_{m+1}+\eta^{m+1} L(e_{m+1})+\sum_{\alpha\geq m+2}((\triangle_{S^m(r)}\eta^\alpha) e_\alpha+\eta^\alpha L(e_\alpha))\\
=&(\td L+\fr{m}{r^2})\eta^{m+1} e_{m+1} +\sum_{\alpha\geq m+2}\td L(\eta^\alpha) e_\alpha
\end{align*}
where $\td L=\triangle_{S^m(r)}+1$. Furthermore, let $\lambda_k$, $k\geq 0$ be the eigenvalues of $\td L$ and write
$\eta^\alpha=\sum_{k\geq 0}\eta^\alpha_k$ for some eigenfunctions $\eta^\alpha_k$ satisfying $\td L(\eta^\alpha_k)=-\lambda_k\eta^\alpha_k$, $k\geq 0$.

It is well-known that the eigenvalues of $-\triangle_{S^m(r)}$ is $\fr{k(m+k-1)}{r^2}$, $k\geq 0$, so that
$$\lambda_k=\fr{k(m+k-1)}{r^2}-1, \text{ for }k=0,1,2,\cdots,$$
with constants being the eigenfunctions corresponding to $k=0$. But by \eqref{vp}, $\int_{S^m(r)}\eta^\alpha e^{-f}dV_{S^m(r)}=0$ which implies that $\eta^\alpha_0=0$. Therefore,
\begin{align}
Q(\eta,\eta)=&-\int_{S^m(r)}\lagl\eta, L(\eta)\ragl e^{-f}dV_{S^m(r)}\nnm\\
=&-\int_{S^m(r)}\eta^{m+1}(\td L+\fr{m}{r^2})\eta^{m+1} e^{-f}dV_{S^m(r)}+\sum_{\alpha\geq m+2}\int_{S^m(r)}\eta^\alpha(-\td L\eta^\alpha) e^{-f}dV_{S^m(r)}\nnm\\
=&\sum_{k\geq 1}\int_{S^m(r)}\left(\fr{k(m+k-1)}{r^2}-\fr1{r^2}(m+r^2)\right)|\eta^{m+1}_k|^2 e^{-f}dV_{S^m(r)}\nnm\\
&+\sum_{\alpha\geq m+2,k\geq 1}\int_{S^m(r)}\left(\fr{k(m+k-1)}{r^2}-1\right)|\eta^\alpha_k|^2 e^{-f}dV_{S^m(r)}\nnm\\
=&-\int_{S^m(r)}|\eta^{m+1}_1|^2 e^{-f}dV_{S^m(r)}\nnm\\
&+\sum_{k\geq 2}\int_{S^m(r)}\left(\fr{k(m+k-1)}{r^2}-\fr1{r^2}(m+r^2)\right)|\eta^{m+1}_k|^2 e^{-f}dV_{S^m(r)}\nnm\\
&+\sum_{\alpha\geq m+2,k\geq 1}\int_{S^m(r)}\left(\fr{k(m+k-1)}{r^2}-1\right)|\eta^\alpha_k|^2 e^{-f}dV_{S^m(r)}\nnm\\
\geq&-\int_{S^m(r)}|\eta^{m+1}_1|^2 e^{-f}dV_{S^m(r)}+\left(\fr{m+2}{r^2}-1\right)\sum_{k\geq 2}\int_{S^m(r)}|\eta^{m+1}_k|^2 e^{-f}dV_{S^m(r)}\nnm\\
&+\left(\fr{m}{r^2}-1\right)\sum_{\alpha\geq m+2,k\geq 1}\int_{S^m(r)}|\eta^\alpha_k|^2 e^{-f}dV_{S^m(r)}.\label{eq6}
\end{align}
Define
$$V_{\lambda_1}=\{\vfi\in C^\infty(S^m(r));\ \triangle_{S^m(r))}\vfi=-\fr m{r^2}\vfi\},\quad
\td V_{\lambda_1}=\{\vfi e_{m+1};\ \vfi\in V_{\lambda_1}\}.
$$
Then $\dim\td V_{\lambda_1}=\dim V_{\lambda_1}$ and the left side is well-known to be $m+1$. It is not hard to see from \eqref{eq6} that $Q$ is negative definite on $\td V_{\lambda_1}$, and thus ${\rm ind}(S^m(r))\geq m+1$ with the equality holding if and only if $\fr{m}{r^2}-1\geq 0$, that is, $r^2\leq m$.
\end{proof}

\section{The uniqueness problem for complete $W$-stable $\xi$-submanifolds}

It is interesting to know wether or not $m$-planes are the only $W$-stable $\xi$-submanifolds. The following rigidity theorem (and one of its simple corollaries) can be taken as the first step in solving this problem:

\begin{thm}\label{main6} Let $x:M^m\to\bbr^{m+p}$ be a properly immersed, complete and $W$-stable $\xi$-submanifold with flat normal bundle. If the condition \eqref{A} is fulfilled, then $x(M^m)$ must be an $m$-plane.\end{thm}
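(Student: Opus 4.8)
The plan is to use constant vectors of $\bbr^{m+p}$ as ``translational'' normal test variations and to show that, under $W$-stability together with the hypothesis \eqref{A}, the normal projection of every admissible constant vector must vanish; this will pin down the tangent spaces of $M^m$ as a fixed $m$-plane. First I would fix a local adapted orthonormal frame $\{e_i,e_\alpha\}$ and, for an arbitrary constant $v\in\bbr^{m+p}$, decompose $v=v^\top+v^\bot$ along $x$. Differentiating the constancy relation $D_{e_j}v=0$ and splitting into tangential and normal parts yields the two structure equations $\nabla_{e_j}v^\top=A_{v^\bot}(e_j)$ and $D^\bot_{e_j}v^\bot=-h(e_j,v^\top)$; the same computation applied to the position vector gives $D^\bot_{e_j}x^\bot=-h(e_j,x^\top)$.

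Feeding these into $\triangle^\bot v^\bot$ and using the Codazzi equation in the flat ambient space together with $H=\xi-x^\bot$ and the parallelism of $\xi$ (so that $D^\bot_{v^\top}H=h(v^\top,x^\top)$), I expect the term $h(v^\top,x^\top)$ to cancel against its opposite coming from $-D^\bot_{x^\top+A_\xi(x^\top)}v^\bot$, and the two copies of $\sum_{ij}\lagl h_{ij},v^\bot\ragl h_{ij}$ to cancel as well, leaving
\[
L(v^\bot)=v^\bot+h(A_\xi(x^\top),v^\top).
\]
This is where \eqref{A} does its work: it annihilates the last term, so that $L(v^\bot)=v^\bot$, exactly as \eqref{N} gives $L(N)=N+\lagl h_{ij},N\ragl h_{ij}$ for parallel $N$. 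Consequently $\lagl v^\bot,L(v^\bot)\ragl=|v^\bot|^2$, and from the simplified second variation \eqref{s2vf} a compactly supported version of $v^\bot$ contributes $Q(v^\bot,v^\bot)=-\int_M|v^\bot|^2e^{-f}\,dV\le 0$, with equality only when $v^\bot\equiv 0$.

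Next I would turn this into a genuine $VP$-variation and run a cut-off. The $VP$-constraint $\int_M\lagl v^\bot,N\ragl e^{-f}=\big\langle v,\int_M Ne^{-f}\big\rangle=0$ for all parallel normal $N$ is a finite linear system on $v$, so the admissible $v$ fill a subspace of dimension at least $m+p-\dim\mathcal P$, where $\mathcal P$ is the (at most $p$-dimensional, by flatness) space of parallel normal fields. For such an admissible $v$ I would test $W$-stability against $\eta_R=\phi_R v^\bot$ with standard cut-offs $\phi_R$ adapted to $|x-\xi|$; by Lemma \ref{lem6.4} and $L(v^\bot)=v^\bot$,
\[
Q(\phi_R v^\bot,\phi_R v^\bot)=-\int_M\phi_R^2|v^\bot|^2e^{-f}\,dV+\int_M|\nabla\phi_R|^2|v^\bot|^2e^{-f}\,dV.
\]
Since $|v^\bot|\le|v|$ is bounded and $V_\xi(x)<+\infty$, properness (which makes the sublevel sets of $|x-\xi|$ compact) and completeness let me arrange that the weighted gradient term tends to $0$, forcing $Q\le-\int_M|v^\bot|^2e^{-f}<0$ unless $v^\bot\equiv 0$. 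As $W$-stability demands $Q\ge 0$, I conclude $v^\bot\equiv 0$ for every admissible $v$. Writing $\mathcal C=\spn\{\int_M Ne^{-f}:D^\bot N\equiv 0\}$, admissibility means $v\in\mathcal C^\bot$, so $\mathcal C^\bot\subseteq\bigcap_pT_pM^m$; dualizing, the span $\mathcal N$ of all normal directions satisfies $\mathcal N\subseteq\mathcal C$, whence $\dim\mathcal N\le\dim\mathcal P\le p$. Since $\mathcal N$ already contains each $p$-dimensional fibre $T_p^\bot M^m$, we get $\dim\mathcal N=p$ and $T_p^\bot M^m=\mathcal N$ for every $p$; thus the tangent spaces are the constant $m$-plane $\mathcal N^\bot$, and completeness upgrades $x(M^m)$ to a full $m$-plane.

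The main obstacle I anticipate is the middle step, namely keeping the cut-off field inside the $VP$-class while passing to the limit. The cut-off destroys the constraint, since $\int_M\lagl\phi_R v^\bot,N\ragl e^{-f}=\int_M\phi_R\lagl v,N\ragl e^{-f}$ need not vanish, so one must subtract a correction $\zeta_R$ lying in the finite-dimensional space of parallel normal fields and verify that its weighted norm is $O(\int_M(1-\phi_R)\lagl v,N\ragl e^{-f})\to 0$, so that neither the cross terms nor $Q(\zeta_R,\zeta_R)$ spoil the strict negativity of the limit. Justifying simultaneously that $\int_M|\nabla\phi_R|^2|v^\bot|^2e^{-f}\to 0$ is exactly the point where properness, completeness and finiteness of the weighted volume enter, in the spirit of the Colding--Minicozzi weighted cut-off estimates; this analytic bookkeeping, rather than the algebraic identity for $L(v^\bot)$, is the delicate part of the argument.
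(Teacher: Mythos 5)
Your outline follows the same road as the paper: the identity $L(v^\bot)=v^\bot+h(A_\xi(x^\top),v^\top)$ for constant $v$ (the paper's Lemma \ref{lem6.2'}), which condition \eqref{A} reduces to $L(v^\bot)=v^\bot$; cut-offs $\phi_R$ of $v^\bot$ as test fields; and the final linear-algebra step identifying a fixed $m$-plane. The genuine gap is exactly at the step you flag, the restoration of the $VP$-constraint after cutting off, and your proposed fix does not close it. First, the correction $\zeta_R$ must itself be compactly supported (the second variation formula is derived only for compact variations), so it has to be of the form $\sum_\alpha c_{R,\alpha}\phi_R e_\alpha$ with $\{e_\alpha\}$ a parallel normal frame, not a genuine parallel field. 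Second, and more seriously, smallness of $\|\zeta_R\|_{2,w}$ does not control $Q(\zeta_R,\zeta_R)$ or the cross terms $Q(\phi_R v^\bot,\phi_R e_\alpha)$: by \eqref{N} and \eqref{eq6.5}, $L(\phi_R e_\alpha)$ contains the term $\phi_R\lagl h_{ij},e_\alpha\ragl h_{ij}$, so these quantities involve $\int\phi_R^2|h^\alpha|^2e^{-f}dV$ and $\int\phi_R^2 h^\alpha_{ij}\lagl h_{ij},v^\bot\ragl e^{-f}dV$, and no weighted integrability of $|h|^2$ is assumed anywhere in the theorem. A product of a coefficient tending to $0$ with a quantity that may tend to $+\infty$ is indeterminate, so the limiting argument as you describe it can fail.

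The paper circumvents both problems. Its Lemma \ref{lem6.3} estimates $Q(\phi\eta,\phi\eta)$ for the \emph{combined} field $\eta=e_\alpha+v^\bot$: integrating by parts against ${\mathcal L}v^\bot=-\lagl h_{ij},v^\bot\ragl h_{ij}$ and using $D^\bot v^\bot=-h(\cdot,v^\top)$ converts the dangerous mixed term into $2\int\phi\, h^\alpha(\nabla\phi,v^\top)e^{-f}dV$, whose bound $\int\phi^2|h^\alpha|^2e^{-f}dV+\int|\nabla\phi|^2|v^\top|^2e^{-f}dV$ is then absorbed by the diagonal contribution $-\int\phi^2|h^\alpha|^2e^{-f}dV$ coming from $\lagl e_\alpha,L(e_\alpha)\ragl$; the curvature terms cancel and one gets $Q(\phi\eta,\phi\eta)\leq-\int\phi^2|\eta|^2e^{-f}dV+\int|\nabla\phi|^2(|\eta|^2+|v^\top|^2)e^{-f}dV$ with no integrability hypothesis on $h$. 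Then, instead of passing to a limit, the paper fixes a single large $R$ for which $Q$ is negative definite on the whole finite-dimensional space $\phi_R V$ with $V=W\oplus V^\bot_1$, shows via the injective projection $\Psi_R$ that the $VP$-admissible subspace $W^\bot(\phi_RV)$ has dimension at least $\dim V^\bot_1$, and concludes from ${\rm ind}_W(Q)=0$ that $V^\bot_1=0$. To salvage your version you would need either an a priori bound on $\int_{B_{2R}}|h|^2e^{-f}dV$ matched to the decay rate of $c_{R,\alpha}$, or to replace the limit by this dimension-counting argument. The remaining parts of your plan (the computation of $L(v^\bot)$, the vanishing of the gradient term since $|v^\bot|\leq|v|$ and $V_\xi(x)<+\infty$, and the identification of $T_pM^m$ with a constant $m$-plane) agree with the paper.
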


\begin{cor}\label{cor7.1} Any properly immersed, complete and $W$-stable $\xi$-submanifold in $\bbr^{m+p}$ with flat normal bundle must be an $m$-plane if the Weigarten map $A_\xi$ vanishes identically.
\end{cor}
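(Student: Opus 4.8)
The plan is to probe the $W$-stability with the normal projections of constant vectors, i.e.\ with the \emph{translation fields}. For a fixed $v\in\bbr^{m+p}$ write $v=v^\top+v^\bot$, so that $v^\bot$ is a normal field along $x$. The crucial preliminary computation is to evaluate the stability operator $L$ of \eqref{5.1} on $v^\bot$. From $D_{e_i}v=0$, splitting into tangent and normal parts gives $D^\bot_{e_i}v^\bot=-h(e_i,v^\top)$ and $\nabla_{e_i}v^\top=A_{v^\bot}e_i$; feeding these into the rough Laplacian and applying the Codazzi equation of the flat ambient space yields
\be\label{laplbot}
\triangle^\bot v^\bot=-\nabla^\bot_{v^\top}H-\lagl h_{ij},v^\bot\ragl h_{ij}.
\ee
Since $H=\xi-x^\bot$ with $\xi$ parallel and $D^\bot_{v^\top}x^\bot=-h(v^\top,x^\top)$, one finds $\nabla^\bot_{v^\top}H=h(v^\top,x^\top)$. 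Substituting \eqref{laplbot} into $L=\triangle^\bot-D^\bot_{x^\top+A_\xi(x^\top)}+\lagl h_{ij},\cdot\ragl h_{ij}+1$, the curvature terms cancel, the term $-D^\bot_{x^\top}v^\bot=h(x^\top,v^\top)$ cancels $-\nabla^\bot_{v^\top}H=-h(v^\top,x^\top)$, and the term $-D^\bot_{A_\xi(x^\top)}v^\bot=h(A_\xi(x^\top),v^\top)$ vanishes \emph{exactly by the hypothesis \eqref{A}}. Hence
\be\label{Lvbot}
L(v^\bot)=v^\bot,\qquad\forall\,v\in\bbr^{m+p}.
\ee
This is the single place where \eqref{A} is used, and it is the higher-codimension analogue of the fact that translations are eigenfields of the stability operator with eigenvalue $1$.

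Granting \eqref{Lvbot}, the quadratic form on $v^\bot$ is $Q(v^\bot,v^\bot)=-\int_M|v^\bot|^2e^{-f}dV\le 0$, with equality only if $v^\bot\equiv 0$. To turn this against $W$-stability I must produce from $v^\bot$ an admissible ($VP$, compactly supported, $SN$) field. Since $W$-stability presupposes $V_\xi(x)=\int_Me^{-f}dV<\infty$ (Definition \ref{wstblty}) and $|v^\bot|\le|v|$ is bounded, $v^\bot\in L^2_w(M)$, and an annular cut-off $\phi_R$ built from the proper function $|x-\xi|$ (properness guaranteeing $\{|x-\xi|\le R\}$ compact) gives $\int_M|\nabla\phi_R|^2|v^\bot|^2e^{-f}dV\to 0$; combined with Lemma \ref{lem6.4} this yields $Q(\phi_Rv^\bot,\phi_Rv^\bot)\to-\int_M|v^\bot|^2e^{-f}dV$. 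As $\phi_Rv^\bot$ is generally not $VP$, I add a finite-dimensional correction: letting $\mathcal P\subset\Gamma(T^\bot M)$ be the space of parallel normal fields (with $\dim\mathcal P=:q\le p$, since a parallel field is fixed by its value at one point), I subtract $\sum_\beta c^R_\beta\phi_RN_\beta$ for a basis $\{N_\beta\}$ of $\mathcal P$, choosing $c^R_\beta$ to enforce \eqref{vp}. Provided $v$ is chosen below so that $\int_M\lagl v^\bot,N_\beta\ragl e^{-f}dV=0$, one gets $c^R_\beta\to 0$, and using $L(N_\beta)=N_\beta+\lagl h_{ij},N_\beta\ragl h_{ij}$ from \eqref{N} the correction contributes nothing in the limit.

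The selection of $v$ is governed by a dimension count. Consider the linear map $P\colon\bbr^{m+p}\to\mathcal P$ sending $v$ to the weighted $L^2$-orthogonal projection of $v^\bot$ onto $\mathcal P$. Because $\dim\bbr^{m+p}=m+p>q=\dim\mathcal P$, the kernel $K:=\ker P$ satisfies $\dim K\ge m+p-q$. For every $v\in K$ the field $v^\bot$ is weighted-orthogonal to all parallel normal fields, so the corrected cut-offs are genuine $VP$-variations and $W$-stability forces $-\int_M|v^\bot|^2e^{-f}dV\ge 0$; hence $v^\bot\equiv 0$, i.e.\ $v\in T_pM$ for every $p$. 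Thus $K\subseteq T_pM$ for all $p$, giving $\dim K\le m$; together with $\dim K\ge m+p-q$ and $q\le p$ this forces $\dim K=m$ and $q=p$. Consequently $T_pM=K$ is one and the same fixed $m$-dimensional subspace of $\bbr^{m+p}$ for every $p$, so $x(M)$ lies in an affine $m$-plane, and completeness together with properness upgrade this to $x(M)$ being an entire $m$-plane.

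The routine parts are the Codazzi/Bochner computation behind \eqref{laplbot}--\eqref{Lvbot} and the second-variation bookkeeping. The genuine obstacle is the analytic step of the middle paragraph: simultaneously controlling the cut-off error $\int_M|\nabla\phi_R|^2|v^\bot|^2e^{-f}dV\to 0$ via properness and the finiteness of $V_\xi$, and enforcing the constraint \eqref{vp} \emph{exactly} through a correction whose size tends to $0$, so that the clean inequality $Q\le-\int_M|v^\bot|^2e^{-f}dV$ survives the limit. Everything hinges on \eqref{A} through \eqref{Lvbot}; this also explains why relaxing \eqref{A} (the Problem raised in the Introduction) is delicate, for without it an extra term $h(A_\xi(x^\top),v^\top)$ re-enters $L(v^\bot)$ and destroys the eigenfield property.
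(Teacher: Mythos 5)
Your key identity $L(v^\bot)=v^\bot$ is correct and is exactly the paper's Lemma \ref{lem6.2'} specialized to $A_\xi\equiv 0$ (the paper itself disposes of this corollary in one line, by noting that $A_\xi\equiv 0$ makes \eqref{A} trivially true and invoking Theorem \ref{main6}; you re-prove the theorem). Your endgame is genuinely different from the paper's: instead of introducing $V^\bot_0$, $V^\bot_1$ and bounding $\dim V^\bot_1$ by ${\rm ind}_W(Q)=0$ via the maps $\Phi_R,\Psi_R$ acting on the finite-dimensional space $\phi_RV$ at a \emph{fixed} large $R$, you test stability along the kernel $K$ of the map sending $v$ to the weighted projection of $v^\bot$ onto the parallel fields, and close with the rank count $\dim K\ge m+p-q$, $\dim K\le m$, $q\le p$. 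That count is correct and is arguably tidier than the paper's final step.

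The gap is in your middle paragraph. You assert that since $c^R_\beta\to 0$, the correction $\sum_\beta c^R_\beta\phi_RN_\beta$ ``contributes nothing in the limit.'' Expanding $Q$ bilinearly, that correction produces, via $L(N_\beta)=N_\beta+\lagl h_{ij},N_\beta\ragl h_{ij}$, terms of the form $c^R_\beta c^R_\gamma\int_M\phi_R^2\lagl h_{ij},N_\beta\ragl\lagl h_{ij},N_\gamma\ragl e^{-f}dV$ and $c^R_\beta\int_M\phi_R^2\lagl h_{ij},v^\bot\ragl\lagl h_{ij},N_\beta\ragl e^{-f}dV$. Nothing in the hypotheses bounds $\int_M|h|^2e^{-f}dV$, so these integrals may grow without bound as $R\to\infty$, and ``coefficient tending to $0$ times unbounded integral'' is not $o(1)$; the cross term in particular has no sign. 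This is precisely the difficulty that the paper's Lemma \ref{lem6.3} is built to circumvent: for the combined field $\phi_R(e_\alpha+v^\bot)$ one first converts the cross term by integration by parts (using ${\mathcal L}$ and \eqref{eq6.7}) into $2\int_M\phi_R\,h^\alpha(\nabla\phi_R,v^\top)e^{-f}dV$, and then absorbs the resulting $\int_M\phi_R^2|h^\alpha|^2e^{-f}dV$ into the \emph{negative} self-interaction term $-\int_M\phi_R^2|h^\alpha|^2e^{-f}dV$ coming from $L(e_\alpha)$. Your argument is repairable — the same cancellation goes through for $\phi_R\bigl(v^\bot-\sum_\beta c^R_\beta N_\beta\bigr)$ with arbitrary coefficients, after which the $|\nabla\phi_R|^2$ remainders vanish in the limit — but as written the step fails: Lemma \ref{lem6.4} plus $c^R_\beta\to 0$ alone does not give the inequality $\limsup_R Q\le-\int_M|v^\bot|^2e^{-f}dV$ on which everything else rests.
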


\begin{cor}\label{cor7.2} Any properly immersed, complete and $W$-stable self-shrinker in $\bbr^{m+p}$ with flat normal bundle must be an $m$-plane passing through the origin.
\end{cor}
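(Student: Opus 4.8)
The plan is to obtain this statement as a direct specialization of Corollary~\ref{cor7.1} (itself a consequence of Theorem~\ref{main6}), the only genuinely new ingredient being the location of the resulting plane. First I would record the elementary but crucial observation that a self-shrinker is precisely a $\xi$-submanifold with $\xi\equiv 0$: the self-shrinker equation \eqref{eq1.1} reads $H+x^\bot=0$, which is exactly \eqref{xisub} with the parallel normal field taken to be the zero field. In particular the Weingarten map $A_\xi=A_0$ vanishes identically, so that $A_\xi(x^\top)=0$ and the technical condition \eqref{A} is satisfied trivially --- indeed the vanishing of $A_\xi$ is precisely the hypothesis of Corollary~\ref{cor7.1}.

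Granting the standing hypotheses that $x$ is properly immersed, complete, $W$-stable and has flat normal bundle, Corollary~\ref{cor7.1} then applies with no further work and forces $x(M^m)$ to be some $m$-plane $P^m$.

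It remains only to pin this plane to the origin, and here I would use the vanishing of $\xi$ a second time. As an $m$-plane, $P^m$ is totally geodesic, so $H\equiv 0$ and the self-shrinker equation collapses to $x^\bot\equiv 0$; that is, the position vector of every point of $P^m$ is tangent to $P^m$. Writing $P^m=p_0+V$ with $V$ the constant tangent subspace, the normal component (i.e., the $V^\bot$-projection) of each position vector equals that of $p_0$, which must therefore vanish; hence $p_0\in V$ and $P^m=V$ contains the origin. Equivalently, in the notation of Example~\ref{expl1}, the parallel field attached to an $m$-plane is the position vector $\strl{\ra}{Op_0}$ of the foot of the perpendicular dropped from the origin, and $\xi\equiv 0$ forces $p_0=0$.

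Thus the whole weight of the statement sits inside Theorem~\ref{main6}; the two short steps that remain are formal. I expect no real obstacle beyond keeping careful track of the double role played by the hypothesis $\xi\equiv 0$: once to trivialize condition \eqref{A} and thereby admit the rigidity theorem, and once again, through the now-minimal self-shrinker equation, to locate the limiting plane at the origin.
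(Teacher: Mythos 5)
Your proof is correct and follows exactly the route the paper intends (the paper states this corollary as ``direct''): a self-shrinker is a $\xi$-submanifold with $\xi\equiv 0$, so $A_\xi=0$ trivializes condition \eqref{A} and Corollary \ref{cor7.1} yields an $m$-plane, while $H\equiv 0$ on the plane together with $x^\bot\equiv 0$ pins it to the origin, just as in Example \ref{expl1}. No gaps; your explicit handling of the ``passing through the origin'' clause is a welcome detail the paper leaves implicit.
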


The main motivation here is the idea used by \cite{mc-r} and we need to extend it to fit our consideration of higher codimension.

To prove Theorem \ref{main6}, we may first make use of the universal covering to assume that $M^m$ is simply connected. Then that $x$ has a flat normal bundle implies the existence of a parallel orthonormal normal frame $\{e_\alpha;\  m+1\leq \alpha\leq m+p\}$.

\begin{lem} Let $x$ be a $\xi$-submanifold. Then for any constant vector $v\in\bbr^{m+p}$ and any parallel normal vector field $N$, we have
\be\label{vn}
\td{\mathcal L}\lagl v,N\ragl=-\lagl A_N,A_{v^\bot}\ragl+\lagl A_N(v^\top),A_\xi(x^\top)\ragl,
\ee
where $v^\top$ and $v^\bot$ are the orthogonal projections of the vector $v$ on $TM^m$ and $T^\bot M^m$, respectively.
\end{lem}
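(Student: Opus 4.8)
The plan is to put $\phi:=\lagl v,N\ragl$ and to compute $\td{\mathcal L}\phi=\triangle_M\phi-\nabla_{x^\top+A_\xi(x^\top)}\phi$ directly, working in a local orthonormal tangent frame $\{e_i\}$ that is geodesic at the point under consideration and using the given parallel normal field $N$ together with $H+x^\bot=\xi$ from \eqref{xisub}. The idea is to split the computation into the easy first-order term and the Laplacian, and then to recognize that one spurious tangential term produced by the two must cancel.

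First I would handle the first-order piece. Since $v$ is constant and $N$ is parallel, the Weingarten formula gives $D_{e_i}N=-x_*(A_N e_i)$, whence $e_i(\phi)=\lagl v,D_{e_i}N\ragl=-\lagl v^\top,A_N e_i\ragl=-\lagl A_N(v^\top),e_i\ragl$, i.e. $\nabla\phi=-A_N(v^\top)$. Consequently
\[
\nabla_{x^\top+A_\xi(x^\top)}\phi=-\lagl A_N(v^\top),x^\top\ragl-\lagl A_N(v^\top),A_\xi(x^\top)\ragl,
\]
which already supplies the second term of \eqref{vn} together with a spurious term $\lagl A_N(v^\top),x^\top\ragl$ that will have to be absorbed by the Laplacian.

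Next I would compute $\triangle\phi$ through the Hessian. Differentiating $e_j\phi=-\lagl v,x_*(A_N e_j)\ragl$ once more and applying the Gauss formula $D_{e_i}(x_*(A_N e_j))=x_*(\nabla_{e_i}(A_N e_j))+h(e_i,A_N e_j)$ yields
\[
\triangle\phi=-\lagl v^\top,\textstyle\sum_i(\nabla_{e_i}A_N)(e_i)\ragl-\sum_i\lagl v^\bot,h(e_i,A_N e_i)\ragl ,
\]
and the second sum is immediately $\tr(A_N A_{v^\bot})=\lagl A_N,A_{v^\bot}\ragl$, giving the first term of \eqref{vn}. The crux — and the step I expect to be the main obstacle — is evaluating the trace $\sum_i(\nabla_{e_i}A_N)(e_i)$ in general codimension. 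Here I would invoke the Codazzi equation for the flat ambient $\bbr^{m+p}$: for parallel $N$ one has $\lagl(\nabla_X A_N)Y,Z\ragl=\lagl(D^\bot_X h)(Y,Z),N\ragl$, a quantity that is totally symmetric in $X,Y,Z$. Tracing and using this symmetry gives $\lagl\sum_i(\nabla_{e_i}A_N)(e_i),Z\ragl=\lagl D^\bot_Z H,N\ragl$. Getting the Codazzi normalization and the signs right in the parallel-normal setting is the delicate point.

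Finally I would feed in the $\xi$-submanifold equation. From $H=\xi-x^\bot$, the parallelism $D^\bot\xi=0$, and the identity $D^\bot_Z x^\bot=-h(Z,x^\top)$ (obtained by splitting $x=x^\top+x^\bot$ and taking the normal component of $D_Z x=x_*(Z)$), one gets $D^\bot_Z H=h(Z,x^\top)$, so that $\lagl\sum_i(\nabla_{e_i}A_N)(e_i),Z\ragl=\lagl h(Z,x^\top),N\ragl=\lagl Z,A_N(x^\top)\ragl$, i.e. $\sum_i(\nabla_{e_i}A_N)(e_i)=A_N(x^\top)$. Then $\lagl v^\top,A_N(x^\top)\ragl=\lagl A_N(v^\top),x^\top\ragl$ cancels exactly the spurious term from the first-order step, leaving
\[
\td{\mathcal L}\phi=-\lagl A_N,A_{v^\bot}\ragl+\lagl A_N(v^\top),A_\xi(x^\top)\ragl,
\]
which is \eqref{vn}.
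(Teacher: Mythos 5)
Your proposal is correct and follows essentially the same route as the paper: compute $\nabla\lagl v,N\ragl=-A_N(v^\top)$ via the Weingarten formula, evaluate the Laplacian using Codazzi together with $D^\bot_Z H=h(Z,x^\top)$ from the $\xi$-equation, and observe that the resulting $\lagl A_N(v^\top),x^\top\ragl$ term cancels the one coming from the drift $\nabla_{x^\top}$. The paper carries out the identical computation in index notation (writing $\sum_i h_{iji}$ and identifying $-\lagl h_{iji},N\ragl$ with $\lagl x,N\ragl_{,j}$), whereas you phrase it invariantly through $\tr(\nabla A_N)$, but the substance is the same.
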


\begin{proof} By using Weingarten formula and the equality that $D^\bot (H+x^\bot)\equiv 0$, we find
\begin{align*}\td{\mathcal L}\lagl v,N\ragl=&\triangle\lagl v,N\ragl-\nabla_{x^\top+A_\xi(x^\top)}\lagl v,N\ragl\\
=&(\lagl v,-A_N(e_i)\ragl)_{,i}-\lagl v,-A_N(x^\top+A_\xi(x^\top))\ragl\\
=&-\lagl h_{iji},N\ragl\lagl v,e_j\ragl-\lagl h_{ij},N\ragl\lagl v,e_j\ragl_{,i}+\lagl v,A_N(x^\top)\ragl+\lagl v,A_N(A_\xi(x^\top)\ragl\\
=&\lagl x,N\ragl_j\lagl v,e_j\ragl-\lagl h_{ij},N\ragl\lagl v,h_{ji}\ragl+\lagl v,A_N(x^\top)\ragl+\lagl v,A_N(A_\xi(x^\top)\ragl\\
=&-\lagl x,A_N(e_j)\ragl\lagl v,e_j\ragl-\lagl A_N,A_{v^\bot}\ragl+\lagl v,A_N(x^\top)\ragl+\lagl v,A_N(A_\xi(x^\top)\ragl\\
=&-\lagl x^\top,A_N(v^\top)\ragl-\lagl A_N,A_{v^\bot}\ragl+\lagl A_N(v^\top),x^\top\ragl+\lagl A_N (v^\top),A_\xi(x^\top)\ragl\\
=&-\lagl A_N,A_{v^\bot}\ragl+\lagl A_N (v^\top),A_\xi(x^\top)\ragl.
\end{align*}
\end{proof}

From \eqref{N}, \eqref{vn} and \eqref{eq6.5} we can easily find

\begin{lem}\label{lem6.2'} For a $\xi$-submanifold $x$, it holds that
\be L(v^\bot)= v^\bot+h(A_\xi(x^\top),v^\top)=v^\bot+\lagl h_{ik},\xi\ragl\lagl x,e_i\ragl\lagl v,e_j\ragl h_{kj},\quad\forall\, v\in\bbr^{m+p}.\label{lvbot}\ee
\end{lem}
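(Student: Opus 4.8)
The plan is to expand $v^\bot$ in a parallel normal frame and apply the three cited identities summand by summand. Since in the present setting $M^m$ is simply connected with flat normal bundle, fix a parallel orthonormal normal frame $\{e_\alpha;\ m+1\leq\alpha\leq m+p\}$ and write $v^\bot=\sum_\alpha\phi_\alpha e_\alpha$ with $\phi_\alpha:=\lagl v,e_\alpha\ragl\in C^\infty(M^m)$. Applying Lemma \ref{lem6.2} to each $\phi_\alpha e_\alpha$ and using $D^\bot e_\alpha=0$, the term $2D^\bot_{\nabla\phi_\alpha}e_\alpha$ drops out, so that
$$
L(v^\bot)=\sum_\alpha(\td{\mathcal L}\phi_\alpha)e_\alpha+\sum_\alpha\phi_\alpha L(e_\alpha).
$$

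Next I would feed in the two structural formulas. Because $e_\alpha$ is parallel, \eqref{N} gives $L(e_\alpha)=e_\alpha+\lagl h_{ij},e_\alpha\ragl h_{ij}$, and summing against $\phi_\alpha$ (using $\sum_\alpha\lagl v,e_\alpha\ragl e_\alpha=v^\bot$ and $\sum_\alpha\lagl h_{ij},e_\alpha\ragl\lagl v,e_\alpha\ragl=\lagl h_{ij},v^\bot\ragl$) yields $v^\bot+\lagl h_{ij},v^\bot\ragl h_{ij}$ for the second sum. For the first sum, \eqref{vn} with $N=e_\alpha$ gives $\td{\mathcal L}\phi_\alpha=-\lagl A_{e_\alpha},A_{v^\bot}\ragl+\lagl A_{e_\alpha}(v^\top),A_\xi(x^\top)\ragl$.

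The decisive point is a pair of contractions against the frame. Writing $\lagl A_{e_\alpha},A_{v^\bot}\ragl=\lagl h_{ij},e_\alpha\ragl\lagl h_{ij},v^\bot\ragl$ and using $\sum_\alpha\lagl h_{ij},e_\alpha\ragl e_\alpha=h_{ij}$ (valid since $h_{ij}$ is normal), the first piece of the first sum collapses to $\lagl h_{ij},v^\bot\ragl h_{ij}$, which exactly cancels the matching contribution coming from \eqref{N}. For the remaining piece, expanding $A_{e_\alpha}(v^\top)$ and $A_\xi(x^\top)$ in components and summing against $e_\alpha$ rebuilds the normal vector $h(A_\xi(x^\top),v^\top)$; unwinding the Weingarten maps then produces exactly the stated expression $\lagl h_{ik},\xi\ragl\lagl x,e_i\ragl\lagl v,e_j\ragl h_{kj}$. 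Combining the two sums and cancelling the $\lagl h_{ij},v^\bot\ragl h_{ij}$ terms leaves $L(v^\bot)=v^\bot+h(A_\xi(x^\top),v^\top)$.

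The computation is entirely algebraic, so I expect no analytic difficulty; the only care needed is the index bookkeeping — verifying the frame identity $\sum_\alpha\lagl h_{ij},e_\alpha\ragl e_\alpha=h_{ij}$, checking that the two $\lagl h_{ij},v^\bot\ragl h_{ij}$ contributions cancel exactly, and performing the reindexing that identifies $h(A_\xi(x^\top),v^\top)$ with the component form on the right-hand side.
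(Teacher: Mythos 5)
Your proposal is correct and follows exactly the route the paper intends: the paper introduces Lemma \ref{lem6.2'} with the phrase ``From \eqref{N}, \eqref{vn} and \eqref{eq6.5} we can easily find,'' and your expansion $v^\bot=\sum_\alpha\lagl v,e_\alpha\ragl e_\alpha$ in the parallel frame, followed by the cancellation of the two $\lagl h_{ij},v^\bot\ragl h_{ij}$ contributions and the identification $\sum_\alpha\lagl A_{e_\alpha}(v^\top),A_\xi(x^\top)\ragl e_\alpha=h(A_\xi(x^\top),v^\top)$, is precisely that computation. Your remark that the flat-normal-bundle/simply-connected hypothesis of Section 7 is what guarantees the parallel frame is also the right reading of the context.
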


In what follows, we always assume that the condition \eqref{A} is fulfilled. In this case, \eqref{vn} and \eqref{lvbot} reduce respectively to
\be\label{vn-vbot}\td{\mathcal L}\lagl v,N\ragl=-\lagl h_{ij},N\ragl\lagl h_{ij},v\ragl,\quad L(v^\bot)=v^\bot,\quad\forall\,v\in\bbr^{m+p}.\ee

\begin{lem}\label{lem6.3} For any $\eta=e_\alpha+v^\bot$, $v\in\bbr^{m+p}$, it holds that
\be
Q(\phi\eta,\phi\eta)\leq -\int_M \phi^2|\eta|^2e^{-f}dV+\int_M |\nabla\phi|^2(|\eta|^2+|v^\top|^2)e^{-f}dV,\quad\forall\,\phi\in C^\infty_0(M^m).
\ee
\end{lem}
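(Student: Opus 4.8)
The plan is to compute $Q(\phi\eta,\phi\eta)$ by reducing it, via Lemma \ref{lem6.4}, to the pointwise quantity $\lagl\eta,L\eta\ragl$ plus the gradient correction term, and then to estimate $\lagl\eta,L\eta\ragl$ from below using the reduced identities \eqref{vn-vbot} available under the standing assumption \eqref{A}. First I would invoke Lemma \ref{lem6.4} with the test function $\phi$ to write
\be
Q(\phi\eta,\phi\eta)=-\int_M\phi^2\lagl\eta,L(\eta)\ragl e^{-f}dV+\int_M|\nabla\phi|^2|\eta|^2e^{-f}dV.\nnm
\ee
This isolates the essential pointwise object $\lagl\eta,L(\eta)\ragl$, so the whole lemma comes down to producing a suitable lower bound for it when $\eta=e_\alpha+v^\bot$.

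The next step is to compute $L(\eta)=L(e_\alpha)+L(v^\bot)$. Since $e_\alpha$ is a parallel normal field, \eqref{N} gives $L(e_\alpha)=e_\alpha+\lagl h_{ij},e_\alpha\ragl h_{ij}$; and under condition \eqref{A}, the second identity in \eqref{vn-vbot} gives $L(v^\bot)=v^\bot$. Hence $L(\eta)=\eta+\lagl h_{ij},e_\alpha\ragl h_{ij}$, and therefore
\be
\lagl\eta,L(\eta)\ragl=|\eta|^2+\lagl h_{ij},e_\alpha\ragl\lagl h_{ij},\eta\ragl.\nnm
\ee
To get the sign right, I would expand the cross term against $\eta=e_\alpha+v^\bot$, producing $\sum_{ij}\lagl h_{ij},e_\alpha\ragl^2$ (a manifestly nonnegative piece) plus the mixed contribution $\lagl h_{ij},e_\alpha\ragl\lagl h_{ij},v^\bot\ragl$. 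Dropping the nonnegative square and bounding the mixed term is where the actual content lies.

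The main obstacle is controlling that mixed term $\lagl h_{ij},e_\alpha\ragl\lagl h_{ij},v^\bot\ragl$ so that the final estimate closes with the specific weight $|\eta|^2+|v^\top|^2$ on the $|\nabla\phi|^2$ integral rather than merely $|\eta|^2$. The expected mechanism is that the extra $|v^\top|^2$ slack in the gradient term is exactly what absorbs the indefinite cross contribution: I anticipate that the discrepancy between $\lagl\eta,L(\eta)\ragl$ and $|\eta|^2$ can be rewritten, using the decomposition $v=v^\top+v^\bot$ and the flatness of the normal bundle, as a divergence-type quantity or a term that is absorbed after one further application of Lemma \ref{lem6.4}-style integration by parts, thereby transferring a $|v^\top|^2$-weighted term onto $|\nabla\phi|^2$. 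Concretely, I would aim to show $\lagl\eta,L(\eta)\ragl\geq|\eta|^2$ modulo a term whose integral against $\phi^2e^{-f}$ is bounded by $\int_M|\nabla\phi|^2|v^\top|^2e^{-f}dV$; combining this with the displayed reduction from Lemma \ref{lem6.4} then yields the claimed inequality. The delicate point throughout will be keeping careful track of which terms are genuinely nonnegative (the squared second-fundamental-form contributions) versus which must be moved to the right-hand side, and ensuring the integration by parts does not generate boundary terms — here the compact support of $\phi$ is what guarantees their vanishing.
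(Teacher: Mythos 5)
Your outline follows the same route as the paper's proof: reduce via Lemma \ref{lem6.4} to $Q(\phi\eta,\phi\eta)=-\int_M\phi^2\lagl\eta,L\eta\ragl e^{-f}dV+\int_M|\nabla\phi|^2|\eta|^2e^{-f}dV$, compute $L(\eta)=\eta+h^\alpha_{ij}h_{ij}$ from \eqref{N} and \eqref{vn-vbot}, and then control the mixed term $\int_M\phi^2h^\alpha_{ij}\lagl h_{ij},v^\bot\ragl e^{-f}dV$. But the step where all the content lies is only anticipated, not carried out, and the ``divergence-type'' mechanism you gesture at needs to be made exact. Write $\int_M\phi^2\lagl e_\alpha,v^\bot\ragl e^{-f}dV=\int_M\phi^2\lagl e_\alpha,L(v^\bot)\ragl e^{-f}dV$ using $L(v^\bot)=v^\bot$, split $L={\mathcal L}+\lagl h_{ij},\cdot\ragl h_{ij}+1$ as in \eqref{5.1}, and apply the self-adjointness formula \eqref{eq6.7} to the pair $(\phi^2e_\alpha,v^\bot)$. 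Since $D^\bot e_\alpha=0$ and $D^\bot_{e_i}v^\bot=-h(e_i,v^\top)$ for the constant vector $v$, everything cancels except
\[
\int_M\phi^2h^\alpha_{ij}\lagl h_{ij},v^\bot\ragl e^{-f}dV=2\int_M\phi\,h^\alpha(\nabla\phi,v^\top)\,e^{-f}dV.
\]
This specific identity, and not merely the decomposition $v=v^\top+v^\bot$, is what makes $|v^\top|$ enter the estimate.

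Second, you propose to ``drop the nonnegative square'' $\sum_{i,j}(h^\alpha_{ij})^2$ arising from pairing the cross term against $e_\alpha$; you must keep it. Estimating the right-hand side of the identity above by Young's inequality gives
\[
\Big|\int_M\phi^2h^\alpha_{ij}\lagl h_{ij},v^\bot\ragl e^{-f}dV\Big|\le\int_M\phi^2|h^\alpha|^2e^{-f}dV+\int_M|\nabla\phi|^2|v^\top|^2e^{-f}dV,
\]
and the term $\int_M\phi^2|h^\alpha|^2e^{-f}dV$ on the right is cancelled precisely by the negative contribution $-\int_M\phi^2\sum_{i,j}(h^\alpha_{ij})^2e^{-f}dV$ you intended to discard; without retaining it, the estimate does not close with the stated weight $|\eta|^2+|v^\top|^2$. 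With these two points filled in, your plan coincides with the paper's argument.
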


\begin{proof} By \eqref{N} and \eqref{vn-vbot}, $$L(\eta)=L(e_\alpha+v^\bot)=e_\alpha+h^\alpha_{ij}h_{ij}+v^\bot=\eta+h^\alpha_{ij}h_{ij}.$$
It follows from \eqref{phieta} that
\begin{align}
&Q(\phi\eta,\phi\eta)=-\int_M \lagl \phi \eta,L(\phi \eta)\ragl e^{-f}dV\nnm\\
=&-\int_M\phi^2\lagl\eta,L(\eta)\ragl e^{-f}dV+\int_M |\nabla\phi|^2|\eta|^2 e^{-f}dV\nnm\\
=&-\int_M \phi^2\lagl\eta,\eta+h^\alpha_{ij}h_{ij}\ragl e^{-f}dV+\int_M |\nabla\phi|^2|\eta|^2 e^{-f}dV\nnm\\
=&-\int_M \phi^2|\eta|^2 e^{-f}dV
-\int_M \phi^2 h^\alpha_{ij}\lagl h_{ij},e_\alpha+v^\bot\ragl e^{-f}dV+\int_M |\nabla\phi|^2|\eta|^2 e^{-f}dV.\label{qphieta}
\end{align}

On the other hand, by \eqref{5.1} and \eqref{eq6.7}
\begin{align*}
&\int_M\phi^2\lagl e_\alpha,v^\bot\ragl e^{-f}dV=\int_M\phi^2\lagl e_\alpha, L(v^\bot)\ragl e^{-f}dV\\
=&\int_M \phi^2\lagl e_\alpha,\lagl h_{ij},v^\bot\ragl h_{ij}+v^\bot\ragl e^{-f}dV
+\int_M\lagl\phi^2 e_\alpha,{\mathcal L}v^\bot\ragl e^{-f}dV\\
=&\int_M\phi^2\lagl e_\alpha,v^\bot\ragl e^{-f}dV+\int_M\phi^2h^\alpha_{ij}\lagl h_{ij}, v^\bot\ragl e^{-f}dV-\int_M \lagl D^\bot(\phi^2e_\alpha),D^\bot v^\bot\ragl e^{-f}dV\\
=&\int_M\phi^2\lagl e_\alpha,v^\bot\ragl e^{-f}dV+\int_M\phi^2h^\alpha_{ij}\lagl h_{ij}, v^\bot\ragl e^{-f}dV-2\int_M \phi\lagl\nabla (\phi)e_\alpha,-d(v^\top)\ragl e^{-f}dV\\
=&\int_M\phi^2\lagl e_\alpha,v^\bot\ragl e^{-f}dV+\int_M\phi^2h^\alpha_{ij}\lagl h_{ij}, v^\bot\ragl e^{-f}dV+2\int_M \phi h^\alpha(\nabla \phi,v^\top)e^{-f}dV,\end{align*}
implying that
\begin{align*}
&\left|\int_M\phi^2h^\alpha_{ij}\lagl h_{ij}, v^\bot\ragl e^{-f}dV\right|=\left|2\int_M \phi h^\alpha(\nabla \phi,v^\top)e^{-f}dV\right|\\
\leq& 2\int_M |\phi||h^\alpha||\nabla\phi||v^\top|e^{-f}dV\leq \int_M \phi^2|h^\alpha|^2e^{-f}dV+\int_M|\nabla\phi|^2 |v^\top|^2e^{-f}dV.
\end{align*}
Inserting this into \eqref{qphieta} we complete the proof.
\end{proof}

Define
\be\label{WV}W=\spn_\bbr\{e_\alpha\},\quad V^\top=\{v^\top; v\in\bbr^{m+p}\}\quad V^\bot=\{v^\bot; v\in\bbr^{m+p}\}.\ee
Then $W$ is the space of parallel normal fields of $x$ and $p\leq\dim V^\bot\leq m+p$.

\begin{lem} Denote
\be\label{v0v1}V^\bot_0=\{v^\bot=\const;\ v\in\bbr^{m+p}\}\ee
Then $W\cap V^\bot=V^\bot_0$.
\end{lem}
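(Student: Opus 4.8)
The plan is to prove the equality by establishing the two inclusions $V^\bot_0\subseteq W\cap V^\bot$ and $W\cap V^\bot\subseteq V^\bot_0$ separately. The first inclusion is immediate: if $v^\bot$ is a constant vector field then it lies in $V^\bot$ by definition, and since a constant vector field has vanishing ambient derivative, its normal covariant derivative $D^\bot v^\bot=(Dv^\bot)^\bot$ also vanishes; hence $v^\bot$ is a parallel normal field and so belongs to $W=\spn_\bbr\{e_\alpha\}$. Thus $v^\bot\in W\cap V^\bot$, and the content of the lemma is really the reverse inclusion.

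For the reverse inclusion, I would take an arbitrary $\zeta\in W\cap V^\bot$, so that $\zeta=v^\bot$ for some constant $v\in\bbr^{m+p}$ and $\zeta$ is a parallel normal field, i.e.\ $D^\bot\zeta\equiv 0$. The goal is to show that $\zeta$ is in fact a constant vector field, which by the Weingarten formula $D_X v^\bot=-A_{v^\bot}(X)+D^\bot_X v^\bot=-A_{v^\bot}(X)$ is equivalent to showing $A_{v^\bot}=0$. The key observation is that one may feed the very field $N:=v^\bot$ (a legitimate choice, since it is parallel) into the identity \eqref{vn-vbot}, which is available under the standing assumption \eqref{A}. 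Since $v=v^\top+v^\bot$ with $v^\top\bot v^\bot$, the left-hand side becomes $\td{\mathcal L}\lagl v,v^\bot\ragl=\td{\mathcal L}|v^\bot|^2$, while the right-hand side equals $-\lagl h_{ij},v^\bot\ragl\lagl h_{ij},v\ragl=-|A_{v^\bot}|^2$, using that $h_{ij}$ is normal so $\lagl h_{ij},v\ragl=\lagl h_{ij},v^\bot\ragl=(A_{v^\bot})_{ij}$.

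Next I would use that $\zeta=v^\bot$ being parallel forces $|v^\bot|^2$ to be constant on $M^m$ (differentiate $\lagl v^\bot,v^\bot\ragl$ and invoke $D^\bot v^\bot=0$). Since $\td{\mathcal L}=\triangle_{M^m}-\nabla_{x^\top+A_\xi(x^\top)}$ annihilates constants, the left-hand side of the identity vanishes identically. Comparing the two sides then yields $|A_{v^\bot}|^2\equiv 0$, hence $A_{v^\bot}=0$, so $D_X v^\bot\equiv 0$ and $v^\bot$ is a constant vector on the (connected) manifold $M^m$. Therefore $\zeta\in V^\bot_0$, which completes the inclusion and the proof.

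The main obstacle is precisely the recognition that the specific parallel field $N=v^\bot$ should be substituted into \eqref{vn-vbot}: this is what converts the geometric hypothesis ``$v^\bot$ is parallel'' into the analytic identity $\td{\mathcal L}|v^\bot|^2=-|A_{v^\bot}|^2$, whose left-hand side is then forced to vanish. I should double-check that the cross term in the unreduced formula \eqref{vn} genuinely disappears, and this is exactly where condition \eqref{A} enters: it guarantees $A_{v^\bot}(A_\xi(x^\top))=0$, hence removes the term $\lagl A_{v^\bot}(v^\top),A_\xi(x^\top)\ragl$, which is why working with the reduced form \eqref{vn-vbot} is legitimate. Everything else is routine bookkeeping once this substitution is made.
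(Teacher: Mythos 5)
Your argument is correct, and it reaches the conclusion by a genuinely different (slightly more direct) route than the paper. The paper exploits that $v^\bot$ lies in both $V^\bot$ and $W$ and computes $L(v^\bot)$ twice: once via Lemma \ref{lem6.2'} under condition \eqref{A}, giving $L(v^\bot)=v^\bot$, and once by writing $v^\bot=c^\alpha e_\alpha$ with constant $c^\alpha$ and applying \eqref{N}, giving $L(v^\bot)=v^\bot+c^\alpha h^\alpha_{ij}h_{ij}$; comparing forces $\lagl h_{ij},v^\bot\ragl h_{ij}=0$, and contracting with $v^\bot$ yields $|A_{v^\bot}|^2=0$. You instead bypass the operator $L$ altogether: substituting the parallel field $N=v^\bot$ into the scalar identity \eqref{vn-vbot} turns the hypothesis ``$v^\bot$ is parallel'' into $0=\td{\mathcal L}|v^\bot|^2=-|A_{v^\bot}|^2$, since $|v^\bot|^2$ is constant and the drift Laplacian annihilates constants. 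Both arguments invoke condition \eqref{A} at the same point (to discard the cross term $\lagl A_N(v^\top),A_\xi(x^\top)\ragl$ in \eqref{vn}) and both conclude identically via the Weingarten formula and connectedness of $M^m$. What your version buys is economy: it needs only \eqref{vn} and not the vector-valued consequences \eqref{N}, \eqref{eq6.5} and Lemma \ref{lem6.2'}; what the paper's version buys is that the intermediate identity $L(v^\bot)=v^\bot$ is reused elsewhere (e.g.\ in Lemma \ref{lem6.3}), so no extra computation is really saved in context. Your treatment of the easy inclusion $V^\bot_0\subseteq W\cap V^\bot$, which the paper dismisses as trivial, is also correct.
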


\begin{proof}
For any $\eta\in W\cap V^\bot$, we have $\eta=v^\bot=c^\alpha e_\alpha$ for some $v\in\bbr^{m+p}$ and $c^\alpha\in\bbr$. Then it follows from \eqref{N} and \eqref{eq6.5} that
$$
v^\bot=L(v^\bot)=c^\alpha L(e_\alpha)=c^\alpha (e_\alpha+h^\alpha_{ij}h_{ij})=v^\bot +c^\alpha h^\alpha_{ij}h_{ij}
$$
implying that $c^\alpha h^\alpha_{ij}h_{ij}=0$. Multiplying this with $v^\bot=c^\alpha e_\alpha$ it follows that
$$\lagl h,v^\bot\ragl^2=\sum_{i,j,\alpha,\beta}c^\alpha c^\beta h^\alpha_{ij}h^\beta_{ij}=0.$$
Thus $\lagl h,v^\bot\ragl=0$ or equivalently $A_{v^\bot}=0$ which with the fact that $v^\bot$ is parallel in the normal bundle shows that $v^\bot$ must be a constant vector.

The inverse part is trivial.
\end{proof}

Define
$$\Gamma^{\infty,2}_w(T^\bot M^m):=\{\eta\in\Gamma(T^\bot M);\ \int_M|\eta|^2 e^{-f}dV<+\infty\},$$
on which there is a standard $L^2_w$-inner product $(\cdot,\cdot)$ by
$$(\eta_1,\eta_2):=\int_M\lagl\eta_1,\eta_2\ragl e^{-f}dV,\quad\forall\,\eta_1,\eta_2\in \Gamma^{\infty,2}_w(T^\bot M^m),
$$
giving the corresponding $L^2_w$-norm $\|\cdot\|_{2,w}$. The $L^2_w$-product $(\cdot,\cdot)$ and $L^2_w$-inner norm $\|\cdot\|_{2,w}$ for all weighted square integrable functions on $M^m$ are defined similarly.

Let $V^\bot_1$ be the orthogonal complement of $V^\bot_0$ in $V^\bot$ with respect to the $L^2_w$-inner product, and define $V=W\oplus V^\bot_1$ as subspaces of $\Gamma^{\infty,2}_w(T^\bot M^m)$. Since
$\dim W=p$ and $\dim V^\bot_1\leq \dim V^\bot\leq m+p$, $V$ is finite dimensional which implies that the standard sphere ${\mathbb S}=\{\eta\in V;\ \|\eta\|_{2,w}=1\}\subset V$ is compact.

Now we consider the compact case and prove the following

\begin{prop}\label{cmpt} Any compact $\xi$-submanifold, satisfying condition \eqref{A}, with parallel normal bundle can not be $W$-stable.\end{prop}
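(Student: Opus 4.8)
The plan is to exhibit a single nonzero $VP$-admissible normal field $\eta$ on which the second variation is strictly negative; since $M^m$ is compact this at once contradicts $W$-stability. All of the work will take place inside the finite-dimensional space $V=W\oplus V^\bot_1$ introduced above, and compactness lets me take the constant cut-off $\phi\equiv 1$, so that the relevant quadratic form is simply $Q(\eta,\eta)=-\int_M\lagl\eta,L\eta\ragl e^{-f}dV$ with no boundary terms to worry about.

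First I would show that $Q$ is negative definite on all of $V$. Write a typical element as $\eta=N+w$ with $N\in W$ and $w\in V^\bot_1\subset V^\bot$. By \eqref{N} one has $L(N)=N+\lagl h_{ij},N\ragl h_{ij}$, while the hypothesis \eqref{A} gives, through \eqref{vn-vbot}, that $L(w)=w$; hence $L\eta=\eta+\lagl h_{ij},N\ragl h_{ij}$. Feeding this into $Q$ and using the $L^2_w$-self-adjointness of $L$ (which on the compact $M^m$ comes from \eqref{eq6.7}), the cross term $\int_M\lagl h_{ij},N\ragl\lagl h_{ij},w\ragl e^{-f}dV$ cancels, and I expect to arrive at
\[
Q(\eta,\eta)=-\|\eta\|_{2,w}^2-\int_M\lagl h_{ij},N\ragl^2 e^{-f}dV\leq -\|\eta\|_{2,w}^2<0
\]
for every nonzero $\eta\in V$. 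This is the computational heart of the argument, and it is precisely here that both compactness (to discard the gradient contributions that appear in \eqref{phieta}) and the condition \eqref{A} (to force $L=\id$ on $V^\bot$) are indispensable.

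Next I would produce a nonzero $VP$-vector lying in $V$. By \eqref{vp} a normal field is $VP$ exactly when it is $L^2_w$-orthogonal to $W$, so the admissible directions inside $V$ form the subspace $V\cap W^\bot$, whose dimension is $\dim V-\dim W=\dim V^\bot_1$ since $W\subset V$. It then remains to check that $\dim V^\bot_1\geq 1$: if instead $V^\bot_1=0$ then $V^\bot=V^\bot_0$ by \eqref{v0v1}, i.e.\ every projection $v^\bot$ is a constant vector, which forces the normal spaces---and hence the tangent spaces---of $x$ to be independent of the point; a connected such $M^m$ would be an open piece of a fixed affine $m$-plane and could not be compact. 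Thus a nonzero $VP$-direction $\eta_0\in V\cap W^\bot$ exists.

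Finally, since $M^m$ is compact, $\eta_0$ is a smooth, compactly supported normal field, hence the variation field of some $SN$-variation satisfying \eqref{vp}; by the previous step $Q(\eta_0,\eta_0)<0$, contradicting $W$-stability and proving the proposition. I expect the main obstacle to be the negative-definiteness computation of the second paragraph---in particular verifying the cross-term cancellation---together with the easy but genuinely necessary geometric observation that compactness prevents $V^\bot_1$ from collapsing to zero.
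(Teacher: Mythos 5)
Your proof is correct and takes essentially the same route as the paper's: negative definiteness of $Q$ on $V=W\oplus V^\bot_1$ with the constant cut-off $\phi\equiv 1$ (your cross-term cancellation via self-adjointness of $L$ is exactly the $\phi\equiv1$ specialization of Lemma \ref{lem6.3}), together with the observation that $V^\bot_1=0$ would force the tangent spaces of $M^m$ to be constant and hence $M^m$ to be an affine $m$-plane, contradicting compactness. The only difference is one of presentation: you inline the details that the paper delegates to Lemma \ref{lem6.3} and to the concluding argument in the proof of Theorem \ref{main6}.
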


\begin{proof} It suffices to show that both of the following two are true:

(1) $Q$ is negative definite on $V$ and, consequently, is negative definite on $V^\bot_1$;

(2) $\dim V^\bot_1>0$.

In fact, the conclusion (1) follows directly from Lemma \ref{lem6.3} by choosing $\phi\equiv 1$; while conclusion (2) follows from the fact that the converse of (2) would imply that $M^m=\bbr^m$, by the argument at the end of this paper, which contradicts the compactness assumption.
\end{proof}

Next we consider the non-compact case and thus assume that $x:M^m\to\bbr^{m+p}$ is a complete and non-compact $\xi$-submanifold.

Let $o$ be a fixed point of $M$ and $\bar o=x(o)$. For any $R>0$ we define $\bar B_R(\bar o)=\{x\in\bbr^{m+p};\ |x-\bar o|\leq R\}$ and introduce a cut-off function $\bar\phi_R$ as follows (cf. \cite{mc-r}):
\be
\bar\phi_R(x)=\begin{cases} 1,& x\in \bar B_R(\bar o);\\
1-\fr1R(|x-\bar o|-R),& x\in \bar B_{2R}(\bar o)\bsl \bar B_R(\bar o);\\
0,&x\in\bbr^{m+p}\bsl\bar B_{2R}(\bar o).\end{cases}
\ee
For the given immersion $x:M^m\to\bbr^{m+p}$, let $\phi_R=\bar\phi_R\circ x\in C^\infty(M^m)$ and $B_R(o)=x^{-1}(\bar B_R(\bar o))$. Then $B_R(o)$ is compact since $x$ is properly immersed. In particular, $\phi_R$ is compactly supported. Furthermore, it is easily seen that
$|\nabla\phi_R|\leq |D\bar\phi_R|\leq\fr1R$.

\begin{lem}\label{R0} There is a large $R_0>0$ such that
$$
\int_{B_R(o)}|\eta|^2e^{-f}dV\geq \int_{B_{R_0}(o)}|\eta|^2e^{-f}dV>0,\quad \forall\,\eta\in {\mathbb S},\quad\forall\,R\geq R_0.
$$
\end{lem}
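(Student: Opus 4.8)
The plan is to regard the map $\Phi(\eta,R):=\int_{B_R(o)}|\eta|^2e^{-f}dV$ as a one-parameter family of functions on the unit sphere ${\mathbb S}\subset V$, and to upgrade an obvious pointwise statement to a uniform one by exploiting the compactness of ${\mathbb S}$. First I would record the easy half: because the integrand $|\eta|^2e^{-f}$ is nonnegative and the domains are nested and increasing, $B_{R}(o)=x^{-1}(\bar B_R(\bar o))\subset B_{R'}(o)$ for $R\le R'$, the function $R\mapsto\Phi(\eta,R)$ is nondecreasing. This gives the first inequality $\Phi(\eta,R)\ge\Phi(\eta,R_0)$ for every $R\ge R_0$ and every $\eta$, for whatever $R_0$ we end up choosing. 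Thus the whole content of the lemma is the \emph{uniform} strict positivity $\inf_{\eta\in{\mathbb S}}\Phi(\eta,R_0)>0$ for a suitable $R_0$.

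For the pointwise limit, fix $\eta\in{\mathbb S}$. Since $\eta\in V\subset\Gamma^{\infty,2}_w(T^\bot M^m)$ we have $\int_M|\eta|^2e^{-f}dV=\|\eta\|_{2,w}^2=1<+\infty$, and since $x$ is properly immersed every point of $M^m$ lies in some $B_R(o)$, so $\bigcup_{R>0}B_R(o)=M^m$. Monotone convergence then gives $\Phi(\eta,R)\uparrow 1$ as $R\to+\infty$. Next I would check that for each fixed $R$ the map $\eta\mapsto\Phi(\eta,R)$ is continuous on ${\mathbb S}$: choosing a basis $\zeta_1,\dots,\zeta_d$ of the finite-dimensional space $V=W\oplus V^\bot_1$ and writing $\eta=\sum_k a_k\zeta_k$, one has $\Phi(\eta,R)=\sum_{k,l}a_ka_l\int_{B_R(o)}\lagl\zeta_k,\zeta_l\ragl e^{-f}dV$, a quadratic form in $(a_k)$ whose coefficients are finite (the $B_R(o)$ are compact, as $x$ is proper). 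Continuity in $\eta$ follows.

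Finally, I would uniformize. The functions $\Phi(\cdot,R)$ are continuous on the compact set ${\mathbb S}$, increase monotonically in $R$, and converge pointwise to the continuous limit $\eta\mapsto\|\eta\|_{2,w}^2\equiv 1$; by Dini's theorem the convergence is uniform, so there is $R_0$ with $\Phi(\eta,R_0)>\tfrac12$ for all $\eta\in{\mathbb S}$. (Equivalently, one covers ${\mathbb S}$ by finitely many neighborhoods on each of which some $\Phi(\cdot,R_\eta)>\tfrac12$ and takes $R_0=\max R_\eta$.) Combined with the monotonicity of the first paragraph this yields $\Phi(\eta,R)\ge\Phi(\eta,R_0)>\tfrac12>0$ for all $\eta\in{\mathbb S}$ and all $R\ge R_0$, as required. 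The only real point to watch is that positivity must hold \emph{uniformly} over ${\mathbb S}$ rather than merely for each $\eta$; this is exactly where the finite-dimensionality of $V$ (hence compactness of ${\mathbb S}$, established earlier) and the continuity of $\Phi(\cdot,R)$ enter, and it is the one place where the argument could break down if $V$ were infinite-dimensional.
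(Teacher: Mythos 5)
Your proof is correct. It rests on the same essential mechanism as the paper's --- the compactness of the finite-dimensional sphere ${\mathbb S}$ together with the continuity of $\eta\mapsto\int_{B_R(o)}|\eta|^2e^{-f}dV$ --- but the packaging is genuinely different. The paper argues by contradiction: it assumes the uniform positivity fails, extracts a sequence $\eta_j\in{\mathbb S}$ with $\int_{B_j(o)}|\eta_j|^2e^{-f}dV=0$, passes to a convergent subsequence $\eta_{j_k}\to\eta_0\in{\mathbb S}$, and shows $\int_{B_R(o)}|\eta_0|^2e^{-f}dV=0$ for every $R$, forcing $\eta_0=0$ and contradicting $\eta_0\in{\mathbb S}$. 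You instead prove the positive statement directly: monotone convergence gives $\Phi(\eta,R)\uparrow\|\eta\|_{2,w}^2=1$ pointwise, the quadratic-form expression gives continuity of $\Phi(\cdot,R)$ on ${\mathbb S}$, and Dini's theorem upgrades the monotone pointwise convergence to uniform convergence, yielding a uniform bound $\Phi(\eta,R_0)>\tfrac12$. Your route buys a quantitatively stronger conclusion (a uniform lower bound arbitrarily close to $1$, not merely strict positivity) and makes explicit the monotonicity in $R$ that the paper's statement implicitly relies on for the first inequality; the paper's subsequence argument is marginally more self-contained in that it does not invoke Dini by name, but it is really the sequential-compactness proof of the same uniformity. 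You also correctly identify the one point where the argument would collapse --- infinite-dimensionality of $V$ --- which is exactly the role of the earlier dimension count $\dim V\leq\dim W+\dim V^\bot\leq m+2p$.
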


\begin{proof}
If the lemma is not true, then one can find a sequence $\{\eta_j\}\subset {\mathbb S}$ such that
$$\int_{B_j(o)}|\eta_j|^2e^{-f}dV=0, \quad j=1,2,\cdots.$$
By the compactness of $\mathbb S$, there exists a subsequence $\{\eta_{j_k}\}$ which is convergent to some $\eta_0\in {\mathbb S}$. For any $R>0$, there exists some $K>0$ such that $j_k>R$ for all $k>K$. It follows that
$$
\int_{B_R(o)}|\eta_0|^2e^{-f}dV=\lim_{k\to+\infty}\int_{B_R(o)}|\eta_{j_k}|^2e^{-f}dV =0
$$
which implies that
$$
\int_M|\eta_0|^2e^{-f}dV=\lim_{R\to+\infty}\int_{B_R(o)}|\eta_0|^2e^{-f}dV =0.
$$
Thus we have $\eta_0=0$ contradicting to the fact that $\eta_0\in {\mathbb S}$.
\end{proof}

For each $R>0$, define
\be\label{7.9-1}
m_R:=\min_{\eta\in{\mathbb S}}\{\int_M\phi_R^2|\eta|^2e^{-f}dV\},\quad M_R=\max_{\eta\in{\mathbb S}}\{\int_M\phi_R^2|\eta|^2e^{-f}dV\}.
\ee
Clearly,
\be\label{7.9-2}M_R\leq C:\equiv\max_{\eta\in{\mathbb S}}\int_M|\eta|^2e^{-f}dV<+\infty.\ee
Moreover, $m_R$ is increasing with respect to $R$ which together with Lemma \ref{R0} gives that
\be\label{7.9-3}m_R\geq m_{R_0}>0,\quad \forall\,R\geq R_0.\ee

\begin{lem}
There exists a large $R_0$, such that
\be\label{7.11}
\dim \phi_RV =\dim V,\quad \dim \phi_RV^\bot_1=\dim V^\bot_1,\quad R\geq R_0;
\ee
Furthermore, $Q$ is negative definite on $\phi_RV \supset\phi_RV^\bot_1$.
\end{lem}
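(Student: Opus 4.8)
The plan is to establish the two assertions of \eqref{7.11} and the negative definiteness separately, the former being a soft consequence of $m_R>0$, and the latter resting on a version of Lemma \ref{lem6.3} valid for arbitrary parallel normal fields.

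First I would prove the dimension equalities. Multiplication by $\phi_R$ is a linear endomorphism of the finite-dimensional space $V$, so \eqref{7.11} amounts to its injectivity for large $R$. For $\eta\in{\mathbb S}$ the definition \eqref{7.9-1} of $m_R$ gives $\|\phi_R\eta\|_{2,w}^2=\int_M\phi_R^2|\eta|^2e^{-f}dV\geq m_R$, whence $\|\phi_R\eta\|_{2,w}^2\geq m_R\|\eta\|_{2,w}^2$ for all $\eta\in V$ by homogeneity. Taking $R_0$ as in Lemma \ref{R0}, the monotonicity \eqref{7.9-3} yields $m_R\geq m_{R_0}>0$ for $R\geq R_0$, so $\phi_R$ is injective on $V$ and on $V^\bot_1\subset V$; this proves \eqref{7.11}, and $\phi_RV\supset\phi_RV^\bot_1$ is immediate from $V\supset V^\bot_1$.

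The core step is the estimate for $Q$ on $\phi_RV$. Here I would first upgrade Lemma \ref{lem6.3} from $\eta=e_\alpha+v^\bot$ to an arbitrary $\eta=N+v^\bot\in V$ with $N=\sum_\alpha a^\alpha e_\alpha\in W$. By \eqref{N} and \eqref{vn-vbot} one has $L(\eta)=\eta+\lagl h_{ij},N\ragl h_{ij}$, so \eqref{phieta} produces the same three extra terms as before; the cross term $-\int_M\phi_R^2\lagl h_{ij},N\ragl\lagl h_{ij},v^\bot\ragl e^{-f}dV$ is rewritten by exactly the manipulation in Lemma \ref{lem6.3}---using $L(v^\bot)=v^\bot$, the self-adjointness identity \eqref{eq6.7} for ${\mathcal L}$, the fact that $N$ is parallel, and $D^\bot_{e_i}v^\bot=-h(e_i,v^\top)$---and is then absorbed by Cauchy--Schwarz. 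The outcome is the identical inequality, now for general $N$:
\be
Q(\phi_R\eta,\phi_R\eta)\leq -\int_M\phi_R^2|\eta|^2e^{-f}dV+\int_M|\nabla\phi_R|^2(|\eta|^2+|v^\top|^2)e^{-f}dV.
\ee

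Finally I would deduce negative definiteness. For $\eta\in{\mathbb S}$ the first term is $\leq -m_R\leq -m_{R_0}<0$, while $|\nabla\phi_R|\leq 1/R$ and the gradient integrand is supported on $B_{2R}(o)\setminus B_R(o)$. Since $\eta\in\Gamma^{\infty,2}_w(T^\bot M^m)$ and $\int_M|v^\top|^2e^{-f}dV\leq|v|^2V_\xi(x)<\infty$ (finiteness of $V_\xi(x)$ being part of $W$-stability, Definition \ref{wstblty}), choosing for each $\eta$ a representative $v$ through a fixed linear section makes $\eta\mapsto\int_M(|\eta|^2+|v^\top|^2)e^{-f}dV$ continuous on the compact sphere ${\mathbb S}$, hence bounded by some $C'<\infty$. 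Thus $Q(\phi_R\eta,\phi_R\eta)\leq -m_{R_0}+C'/R^2$ uniformly in $\eta\in{\mathbb S}$, which is negative once $R>\sqrt{C'/m_{R_0}}$; after enlarging $R_0$ accordingly, $Q$ is negative on ${\mathbb S}$ and hence, by homogeneity and the injectivity of $\phi_R$, negative definite on $\phi_RV\supset\phi_RV^\bot_1$. The hard part will be exactly this uniformity over the infinitely many variation directions in ${\mathbb S}$: it is secured only by combining the uniform lower bound $m_{R_0}>0$ from Lemma \ref{R0} with the uniform $O(R^{-2})$ smallness of the gradient correction, the latter hinging on the finite weighted volume $V_\xi(x)<\infty$ and on the compactness of ${\mathbb S}$ afforded by $\dim V<\infty$.
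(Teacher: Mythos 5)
Your argument is correct, and the negative-definiteness half is the paper's own: apply Lemma \ref{lem6.3} to $\phi_R\eta$, bound the leading term below by $m_{R_0}>0$ uniformly on the compact sphere ${\mathbb S}$, and note that the gradient correction is $O(R^{-2})$. You differ in two places. For the injectivity of $\eta\mapsto\phi_R\eta$ you use the coercivity $\|\phi_R\eta\|_{2,w}^2\geq m_R\|\eta\|_{2,w}^2\geq m_{R_0}\|\eta\|_{2,w}^2$, where the paper instead runs a contradiction argument with a normalized sequence in ${\mathbb S}$ converging to zero; your route is shorter and recycles the constant already needed for the definiteness step. More substantively, you notice that Lemma \ref{lem6.3} is stated only for $\eta=e_\alpha+v^\bot$, while a general element of $V=W\oplus V^\bot_1$ has the form $N+v^\bot$ with $N=\sum a^\alpha e_\alpha$; the paper applies the lemma to every $\eta\in{\mathbb S}$ without comment, and your observation that the identical computation works for any parallel $N$ (only $L(N)=N+\lagl h_{ij},N\ragl h_{ij}$, $D^\bot N=0$ and $D^\bot_{e_i}v^\bot=-h(e_i,v^\top)$ are used) closes a small gap the paper leaves implicit. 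Your fixed linear section $\eta\mapsto v$ and the bound $\int_M|v^\top|^2e^{-f}dV\leq|v|^2V_\xi(x)<+\infty$ likewise make precise the uniformity over ${\mathbb S}$ that the paper compresses into the remark that it ``only cares about $v^\bot$.''
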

\begin{proof}
First, we prove $\dim \phi_RV=\dim V$ for all $R\geq R_0$ if $R_0$ is large enough. For a given $R>0$, consider the surjective linear map
$$\Phi_R:V\to\phi_R V,\quad \eta\mapsto \Phi_R(\eta):=\phi_R\eta,\quad \forall\, \eta\in V.$$

We claim that, when $R_0$ is large enough,  the kernel $\ker\Phi_{R_0}$ of $\Phi_{R_0}$ must be trivial. In fact, if it is not the case, there should be a nonzero sequence $\{\eta_j\in V\}$ such that $\phi_j\eta_j=0$.
Define
$\td\eta_j=\fr{\eta_j}{\|\eta_j\|_{2,w}}$.
Then $\phi_j\td\eta_j=0$, and $\{\td\eta_j\}$ is contained in the standard sphere $\mathbb S$. The compactness of $\mathbb S$ assures that, by passing to the subsequence if possible, we can assume that $\td\eta_j\ra \td\eta_0\in {\mathbb S}$. Consequently, we have $\td\eta_0=\lim_{j\to+\infty}\phi_j\td\eta_j=0$ which is not possible! So there must me a large $R_0>0$ such that $\ker\Phi_{R_0}=0$ and the claim is proved.

For any $R\geq R_0$, it is easily seen that $\ker\Phi_R\subset\ker\Phi_{R_0}$ which implies that $\ker\Phi_R=0$ and $\phi_RV\cong V$. In particular, $\dim \phi_RV=\dim V$.

That $\dim \phi_RV^\bot_1=\dim V^\bot_1$ follows in the same way.

Next we are to find a larger $R\geq R_0$ such that $Q$ is negative definite on $\phi_RV$. For this, we first note that $|\nabla\phi_R|$ supports in $B_{2R}(o)\bsl B_R(o)$ and $|\nabla\phi_R|\leq \fr1R$, and then use Lemma \ref{lem6.3} to conclude that, for all $\eta\in{\mathbb S}$
\begin{align*}
Q(\phi_R\eta,\phi_R\eta)
\leq& -\int_M\phi_R^2 |\eta|^2 e^{-f}dV+\int_M|\nabla\phi_R|^2(|\eta|^2+|v^\top|^2)e^{-f}dV\\
\leq& -\int_M\phi_R^2 |\eta|^2 e^{-f}dV+\fr1{R^2}\int_{B_{2R}(0)\bsl B_R(0)}(|\eta|^2+|v^\top|^2)e^{-f}dV.
\end{align*}
Note we only care about $v^\bot$ here and there is nothing to do with $v^\top$. Therefore, by \eqref{7.9-1}--\eqref{7.9-3} and Lemma \ref{R0}, there must be an $R_0$ large enough such that $Q(\phi_R\eta,\phi_R\eta)<0$ for all $\eta\in{\mathbb S}$, $R\geq R_0$. Then the conclusion that $Q$ is negative definite on $\phi_RV$ follows directly from the bi-linearity of $Q$.
\end{proof}

\begin{lem}\label{v1=0} Under the complete and non-compact assumption, we have
\be
V^\bot_1=0 \text{ or equivalently }V^\bot=V^\bot_0.
\ee
\end{lem}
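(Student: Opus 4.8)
The plan is to argue by contradiction: assuming $V^\bot_1\neq 0$, I will manufacture an honest $VP$-variation vector field on which the second-variation form $Q$ is strictly negative, contradicting the $W$-stability hypothesis. All of the hard analytic input (completeness, properness, the cut-off estimate of Lemma~\ref{lem6.3} and the preservation of dimension under multiplication by $\phi_R$) has already been secured in the preceding lemmas, so what remains is essentially a linear-algebra count together with the correct bookkeeping of the $VP$-constraint.

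First I would fix $R\geq R_0$ large enough that the previous lemma applies, so that $\dim\phi_RV=\dim V=p+\dim V^\bot_1$ and $Q$ is negative definite on the finite-dimensional subspace $\phi_RV=\phi_RW\oplus\phi_RV^\bot_1$ of compactly supported normal fields. Since each $\psi\in\phi_RV$ has compact support and is smooth and normal, it is the variation vector field of some compact $SN$-variation, as noted after Definition~\ref{uni-nml-var}.

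The key step is to intersect $\phi_RV$ with the $VP$-constraint. Because $M^m$ is simply connected with flat normal bundle, the parallel normal fields are exactly $W=\spn_\bbr\{e_\alpha\}$, so by \eqref{vp} a compactly supported normal field $\psi$ is a $VP$-variation vector precisely when $\ell_\alpha(\psi):=\int_M\lagl\psi,e_\alpha\ragl e^{-f}dV=0$ for all $\alpha=m+1,\cdots,m+p$, i.e. when $\psi$ is $L^2_w$-orthogonal to $W$. These are $p$ linear conditions on $\phi_RV$, so their common kernel $K\subset\phi_RV$ satisfies $\dim K\geq\dim\phi_RV-p=\dim V^\bot_1$. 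Hence if $\dim V^\bot_1>0$ there is a nonzero $\psi\in K$; being a $VP$-variation vector, $W$-stability forces $Q(\psi,\psi)\geq 0$, while $\psi\in\phi_RV$ forces $Q(\psi,\psi)<0$ by negative definiteness. This contradiction yields $V^\bot_1=0$, equivalently $V^\bot=V^\bot_0$.

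The main subtlety, rather than any estimate, is the exactness of the counting: enlarging $V^\bot_1$ to $V=W\oplus V^\bot_1$ is precisely what absorbs the $p$ linear conditions imposed by $VP$-orthogonality to $W$, so that the kernel $K$ still has dimension at least $\dim V^\bot_1$. One must therefore check that the functionals $\ell_\alpha$ are well defined on $\phi_RV$ (guaranteed since $V_\xi(x)<+\infty$ places $W\subset\Gamma^{\infty,2}_w(T^\bot M^m)$) and that the negative definiteness was established on all of $\phi_RV$, not merely on $\phi_RV^\bot_1$.
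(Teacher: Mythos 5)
Your proof is correct, and it reaches the same contradiction as the paper but by a noticeably more economical route at the key step. Both arguments work inside the finite-dimensional space $\phi_RV$ on which the preceding lemma has already made $Q$ negative definite with $\dim\phi_RV=\dim V=p+\dim V^\bot_1$, and both identify the $VP$-constraint (on the simply connected cover with its parallel frame $\{e_\alpha\}$) with $L^2_w$-orthogonality to $W$. Where you differ is in how you show that the $VP$-admissible part $W^\bot\cap\phi_RV$ is large: the paper constructs an explicit linear map $\Psi_R:\phi_RV^\bot_1\to W^\bot\cap\phi_RV$ by subtracting the weighted $\phi_Re_\alpha$-components, and then runs a normalization-and-compactness argument on the sphere $\mathbb S$ to show $\ker\Psi_R=0$ for $R$ large, concluding $\dim(W^\bot\cap\phi_RV)\geq\dim\phi_RV^\bot_1=\dim V^\bot_1$. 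You obtain the same lower bound by pure rank--nullity: the $p$ functionals $\ell_\alpha$ cut out a subspace $K=W^\bot\cap\phi_RV$ of dimension at least $\dim\phi_RV-p=\dim V^\bot_1$, so a nonzero $VP$-vector with $Q<0$ exists whenever $V^\bot_1\neq 0$. This bypasses the second compactness argument entirely; the only facts you consume are exactly the two conclusions of the previous lemma, plus the directness of the sum $W\oplus V^\bot_1$ (which holds because $W\cap V^\bot_1\subset V^\bot_0\cap V^\bot_1=0$) --- a point worth stating explicitly since the count $\dim V=p+\dim V^\bot_1$ depends on it. What the paper's $\Psi_R$ buys is an explicit injection of $\phi_RV^\bot_1$ into the admissible subspace, which is slightly more information than the dimension bound, but for the purpose of this lemma your counting argument suffices and is cleaner.
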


\begin{proof} Let $W^\bot$ be the orthogonal complement of $W$ in the space $\Gamma^{\infty,2}_w(T^\bot M^m)$ of $L^2_w$-smooth normal sections. For any given $R>0$, define a subspace
$$W^\bot(\phi_RV):=W^\bot\cap (\phi_RV)$$
of $W^\bot$ and a linear map $\Psi_R:\phi_RV^\bot_1\to W^\bot(\phi_RV)$ by
$$
\phi_Rv^\bot\mapsto \Psi_R(\phi_Rv^\bot):=\phi_Rv^\bot-\fr{\int_M\lagl\phi_R v^\bot,e_\alpha\ragl e^{-f}dV}{\int_M\phi_R e^{-f}dV}\phi_R e_\alpha,\quad\,\forall v^\bot\in V^\bot_1.
$$

Claim: There must be a large $R>0$ such that $\ker\Psi_R=0$.

In fact, if this is not true, then we can find a sequence $\{v^\bot_j\}\subset V^\bot_1$ with $\phi_jv^\bot_j\neq 0$ and $\Psi_j(\phi_jv^\bot_j)=0$ for each $j=1,2,\cdots$. It follows that $v^\bot_j\neq 0$, $j=1,2,\cdots$. Define
$$\td v^\bot_j:=\fr{v^\bot_j}{\|v^\bot_j\|_{2,w}},\quad j=1,2,\cdots.$$
Then $\Psi_j(\phi_j\td v^\bot_j)=0$, $j=1,2,\cdots$. Without loss of generality, we can assume that $\td v^\bot_j\to \td v^\bot_0$. Then $\td v^\bot_0\in V^\bot_1$ and $\|\td v^\bot_0\|_{2,w}=1$.

On the other hand, from $\Psi_j(\phi_j\td v^\bot_j)=0$ ($j=1,2,\cdots)$ it follows that
$$
\phi_j\td v^\bot_j=\fr{\int_M\lagl\phi_j\td v^\bot_j,e_\alpha\ragl e^{-f}dV}{\int_M\phi_j e^{-f}dV}\phi_j e_\alpha,\quad j=1,2,\cdots,
$$
implying that
\be\label{7.13}
\|\phi_j\td v^\bot_j\|^2_{2,w}=\fr{\int_M\lagl\phi_j\td v^\bot_j,e_\alpha\ragl e^{-f}dV}{\int_M\phi_j e^{-f}dV}(\phi_j e_\alpha,\phi_j\td v^\bot_j),\quad j=1,2,\cdots.
\ee
But it is clear that $\phi_j\td v^\bot_j\to \td v^\bot_0$ when $j\to +\infty$ since
\begin{align*}
\|\phi_j\td v^\bot_j-\td v^\bot_0\|_{2,w} \leq& \|\phi_j(\td v^\bot_j-\td v^\bot_0)\|_{2,w} +\|(\phi_j-1) \td v^\bot_0 \|_{2,w}\\
\leq& \|\td v^\bot_j-\td v^\bot_0\|_{2,w} +\|\phi_j-1\|_{2,w}\to 0,\quad j\to +\infty.
\end{align*}
Let $j\to+\infty$ in \eqref{7.13} then we obtain
$$\|\td v^\bot_0\|^2_{2,w}=\fr{\int_M\lagl\td v^\bot_0,e_\alpha\ragl e^{-f}dV}{\int_M e^{-f}dV}(e_\alpha,\td v^\bot_0)=0
$$
because $\td v^\bot_0\in V^\bot_1$ is orthogonal to $W$, contradicting to the fact that $\|\td v^\bot_0\|_{2,w}=1$. So the claim is proved.

Thus by \eqref{7.11}, when $R$ large enough it holds that
$$\dim V^\bot_1=\dim \phi_RV^\bot_1\leq \dim W^\bot(\phi_RV)\leq {\rm ind}_W(Q)$$
where ${\rm ind}_W(Q)$ denotes the $W$-stability index of $Q$. By the $W$-stability of $x$ we have ${\rm ind}_W(Q)=0$, implying that $\dim V^\bot_1=0$ and thus $V^\bot_1=0$ or equivalently $V^\bot=V^\bot_0$.
\end{proof}

{\em Proof of Theorem \ref{main6}}

Using Proposition \ref{cmpt}, we conclude that $x:M^m\to\bbr^{m+p}$ must be non-compact. Then by Lemma \ref{v1=0}, we have a direct decomposition
$$
\bbr^{m+p}=V^\top\oplus V^\bot
$$
where $V^\top$ now consists of all constant vectors in $\bbr^{m+p}$ that are tangent to $M^m$ at each point of $M^m$, while $V^\bot$ consists of all constant vectors in $\bbr^{m+p}$ that are normal to $M^m$ at each point of $M^m$. It then follows that $\dim V^\top\leq m$ and $\dim V^\bot\leq p$. Consequently
$$
m+p=\dim\bbr^{m+p}=\dim V^\top+\dim V^\bot\leq m+p
$$
which implies that $\dim V^\top=m$ and $\dim V^\bot=p$. This is true only if $M^m\equiv P^m$.

Theorem \ref{main6} is proved.

\end{document}